\tikzset{
  block/.style={
    rectangle, 
    draw, 
    align=center,
    line width=1pt, 
    rounded corners, 
    text width=4cm, 
    font=\normalsize,
    inner ysep=6pt         
  },
  arrow/.style={-{Latex}, thick, black}
}
\definecolor{blue}{rgb}{0,0,0.9}
\definecolor{red}{rgb}{0.9,0,0}
\definecolor{green}{rgb}{0,0.9,0}
\newcommand{\cT}{{\cal T}}
\newcommand{\cP}{{\cal P}}
\newcommand{\cA}{{\cal A}}
\newcommand{\bS}{{\mathbb{S}}}
\theoremstyle{plain}
\newtheorem{remark}{Remark}
\newtheorem{example}{Example}[section]
\newtheorem{assumption}{Assumption}
\newtheorem{definition}{Definition}
\newtheorem{theorem}{Theorem}
\newtheorem{lemma}{Lemma}
\def\Tr{\mathrm{Tr}}
\def\H{\mathcal{H}}
\def\<{\big\langle}
\def\>{\big\rangle}
\def\M{\mathcal{M}}
\def\P{\mathcal{P}}
\def\D{{\overline D}}
\def\A{\mathcal{A}}
\def\Q{\mathcal{Q}}
\def\B{\mathcal{B}}
\def\K{\mathcal{K}}
\def\L{\mathcal{L}}
\def\F{\mathcal{F}}
\def\R{\mathbb{R}}
\def\bN{\mathcal{N}}
\def\S{\mathbb{S}}
\def\O{\mathcal{O}}
\def\hS{\widehat{S}}
\def\tb#1{\textbf{#1}}
\def\wt{\widetilde}
\def\qRq{\quad\longrightarrow\quad}
\def\tol{\tt{tol}}
\def\timelimit{\tt{TimeLimit}}
\def\cA{\mathcal{A}}
\def\alp{\alpha}
\def\qLRq{{\quad \Longleftrightarrow \quad}}
\def\qRq{{\quad \Longrightarrow \quad}}
\def\tL{{\tilde{\mathcal{L}}}}
\def\Kj{{\K^{(j)}}}
\def\Yj{{Y^{(j)}}}
\def\Mj{{\mathcal{M}_{h_j}}}
\def\rank{{\operatorname{rank}}}
\let\svthefootnote\thefootnote
\newcommand\blankfootnote[1]{
\let\thefootnote\svthefootnote
}
\newtcolorbox{boxA}{
    fontupper = \bf,
    boxrule = 1.5pt,
    colframe = black 
}
\newcommand{\xdownarrow}[1]{%
  {\left\downarrow\vbox to #1{}\right.\kern-\nulldelimiterspace}
}
\newcommand{\barn}{\bar{n}}
\def\inprod#1#2{\langle#1,\,#2\rangle}
\def\degg#1{\lceil#1\rceil}
\def\rmt{{\rm t}}
\begin{document}
        \title{
        A Low-rank Augmented Lagrangian Method for Polyhedral–SDP and Moment-SOS Relaxations of Polynomial Optimization
        }
        \author{
         Di Hou\thanks{Department of Mathematics, National
          University of Singapore, Singapore
          119076 ({\tt dihou@u.nus.edu}).
          }, \quad 
	 Tianyun Tang\thanks{Institute of Operations Research and Analytics, National
          University of Singapore, Singapore
          119076 ({\tt ttang@u.nus.edu}).
          }, \quad 
	   Kim-Chuan Toh\thanks{Department of Mathematics, and 
          Institute of Operations Research and Analytics, 
          National University of Singapore, 
          Singapore
          119076 ({\tt mattohkc@nus.edu.sg}). 
         The research of this author is supported by the Ministry of Education, Singapore, under its Academic Research Fund Tier 3 grant call (MOE-2019-T3-1-010).
         }
 	}
	\date{August 29, 2025}
\maketitle


\begin{abstract}
{
Polynomial optimization problems (POPs) can be reformulated as geometric convex conic programs, as shown by Kim, Kojima, and Toh (SIOPT 30:1251–1273, 2020), though such formulations remain NP-hard. 
In this work, we prove that several well-known relaxations can be unified under a common polyhedral–SDP framework, which arises by approximating the intractable cone by tractable intersections of polyhedral cones
with the positive semidefinite matrix cone. Although effective in providing tight lower bounds, these relaxations become computationally expensive as the number of variables and constraints grows at the rate of $\Omega(n^{2\tau})$ with the relaxation order $\tau$.
To address this challenge, we propose RiNNAL-POP, a low-rank augmented Lagrangian method (ALM) tailored to solve large-scale polyhedral–SDP relaxations of POPs. 
To efficiently handle the $\Omega(n^{2\tau})$ nonnegativity and consistency constraints, we design a tailored projection scheme whose computational cost scales linearly with the number of variables.
In addition, we identify a hidden facial structure in 
the polyhedral--SDP relaxation, which enables us to eliminate  a large number of linear constraints by restricting the matrix variable to affine subspaces corresponding to exposed faces of the semidefinite cone.
The latter enables us to efficiently solve the factorized ALM subproblems 
 over the affine subspaces.
At each ALM iteration, 
we additionally carry out a single projected gradient step with respect to the original matrix variable to automatically adjust the rank and escape from spurious local minima when necessary.
We also extend our RiNNAL-POP algorithmic framework to solve moment-SOS relaxations of POPs.
Extensive numerical experiments on various benchmark problems demonstrate the robustness and efficiency of RiNNAL-POP in solving large-scale polyhedral–SDP relaxations. 
}

\end{abstract}

\bigskip
\noindent{\bf keywords:} semidefinite programming, augmented Lagrangian, polynomial optimization
\\[5pt]
{\bf Mathematics subject classification: 90C22, 90C23, 90C25}

\section{Introduction}\label{sec-intro}

\subsection{Polynomial optimization}

In this paper, we consider the following polynomial optimization problem:
\begin{equation}\label{eq-POP}
\zeta^*=\min \Big\{ f_0(w) \mid  f_i(w) = 0,\; i\in [m],\; w\in D \Big\},\tag{POP}
\end{equation}
where $D\subseteq\R^n$ is a cone and each $f_i(w)$ is a real-valued multivariate polynomial.
\eqref{eq-POP} provides a unified framework for nonconvex polynomial optimization: taking $D=\R^n$ recovers equality‐constrained problems, taking $D=\R_+^n$ recovers nonnegatively‐constrained problems, and as we will show in Subsection~\ref{subsec-ineq}, general inequality‐constrained problems can be handled by introducing standard or squared‐slack variables.

POPs arise frequently in combinatorial optimization, control theory, signal processing, and engineering design. Unfortunately, they are NP‐hard to solve in general. To address this challenge, various convex reformulations have been proposed to approximate their global minimizers \cite{kim2020geometrical,burer2009copositive,lasserre2001global}.

In the next subsection, we present a unified convex conic reformulation of \eqref{eq-POP}, which encompasses both completely positive programming (CPP)
and positive semidefinite programming (SDP).
To keep the presentation concise, we focus mainly on purely conic domains, in particular the nonnegative orthant $D=\R^n_+$ and the Euclidean space $D=\R^n$.
But we should emphasize that the
theoretical analysis (and the algorithm introduced in Section~\ref{sec-alg}) also applies to relaxations of \eqref{eq-POP} even when $D$ is a general semialgebraic–conic domain (cf.\ \cite{kim2020geometrical}).

\subsection{Convex conic reformulation}
To derive a tractable convex reformulation of the nonconvex problem \eqref{eq-POP}, we lift it into a higher-dimensional matrix space in three steps: (i) homogenize all polynomials to a common even degree, (ii) introduce a matrix variable whose entries collect all monomials so that each polynomial constraint becomes linear, and (iii) relax by dropping the rank‐one requirement and replacing the resulting set of matrices by its convex hull. Under mild geometric conditions, the resulting formulation is equivalent to the original problem.

\medskip
\noindent\tb{Homogenization.}
For any positive integer $\tau\ge\max\{\lceil\deg f_i/2\rceil\mid i=0,\dots,m\}$, problem \eqref{eq-POP} is equivalent to its degree-$2\tau$ homogeneous form:
\begin{equation}\label{eq-POP-homo}
    \zeta^*  =  \min \Big\{ \bar{f}_0(x) \mid  \bar{f}_i(x) = 0, i \in [m],\; x_0 = 1,\;  x = (x_0,w)\in \D \Big\},
\end{equation}
where $\D:=\R_+\times D\subseteq \R^{n+1}$ and $\bar{f}_i$ denotes the degree-$2\tau$ homogenization of $f_i$ (see the notation in \eqref{eq-notation-homo} of Subsection \ref{subsec-notations}).

\medskip
\noindent\tb{Lifting.}
Let $ \cA_\tau = \{ \alp = (\alp_0,\ldots, \alp_n) \in \mathbb{N}^{n+1} \mid |\alp| = \tau\}$ be the set of all degree-$\tau$ monomial exponents in $\R^{n+1}$. 
For any exponent $\alpha\in \mathbb{N}^{n+1}$, and $x=(x_0;x_1;\ldots;x_n)\in \R^{n+1}$, we define $x^\alpha = x_0^{\alpha_0}x_1^{\alpha_1}\cdots x_n^{\alpha_n}$.
Choose a subset $\cA\subseteq \cA_\tau$ so that $\cA\times \cA$ contains every monomial exponents appearing in $\bar{f}_i(x)$ for $i=0,\dots,m$. 
Define $ u^\mathcal{A}(x)=(x^\alpha)_{\alpha\in\cA}$ as
the column vector of monomial basis with exponents in $\cA$. 
Let $\bS^\cA$ be the set of symmetric matrices with rows and columns
indexed by $\cA$.
Then each $\bar{f}_i(x)$ can be written as
\[
\bar{f}_i(x) = \left\langle{Q^i},{u^\cA(x) \left(u^\cA(x)\right)^\top}\right\rangle \;\mbox{for some}\;
  Q^i \in \bS^\cA.
\]
Consequently, problem~\eqref{eq-POP-homo} is equivalent to the nonconvex conic program
\begin{equation}\label{eq-COP-nonconvex}
    \eqref{eq-POP-homo}\equiv{\rm COP}(K\cap J):\quad\zeta^*=\min\Big\{\inprod{Q^0}{X} \mid X \in K\cap J, \; \inprod{H^0}{X}=1\Big\},
\end{equation}
where $H^0 \in \mathbb{S}^\mathcal{A}$ has zero entries everywhere except in the row and column indexed by the exponent $e_1^\tau := (\tau, 0, \ldots, 0) \in \mathcal{A}$ with $H^0_{e_1^\tau, e_1^\tau} = 1$, and
\begin{align} \label{eq-K}
K = \Big\{ u^\cA(x) \left(u^\cA(x)\right)^\top \mid x \in \D
\Big\},\quad 
  J = \Big\{ X\in {\rm co}(K) \mid \inprod{Q^i}{X} = 0\;\forall\; i\in [m]
  \Big\}.
\end{align}
In the above, ${\rm co}(K)$ denotes the convex hull of $K$.

\medskip
\noindent\tb{Convex relaxation.}
Since $K$ is generally nonconvex, solving ${\rm COP}(K \cap J)$ directly is difficult. Instead, we consider the convex relaxation problem over the convex set $J$:
\begin{equation}\label{eq-COP-convex}
    {\rm COP}( J):\quad \zeta^J=\min\Big\{\inprod{Q^0}{X} \mid X \in  J, \; \inprod{H^0}{X}=1\Big\}.
\end{equation}
This relaxation is not only convex, but under appropriate conditions, it is also tight, that is, it yields the same optimal value as the original nonconvex problem ${\rm COP}(K \cap J)$ \eqref{eq-COP-nonconvex}. The following result provides sufficient conditions for this equivalence (see \cite[Theorem 3.1]{kim2020geometrical} and 
\cite[Corollary 2.2]{arima2024further} for details).
\begin{theorem}\label{thm-cop-equiv}
    Suppose the following conditions hold: \tb{(A)} $K$ is a nonempty cone in $\bS^\cA$; \tb{(B)} $J$ satisfies ${\rm co}(K \cap J) = J$.
    Then 
    \begin{equation*}
        -\infty<\zeta^{J}<\infty\quad \Longleftrightarrow \quad 
    -\infty<\zeta^{J}=\zeta^*<\infty.
    \end{equation*}
\end{theorem}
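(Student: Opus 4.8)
\emph{Proof proposal.} The reverse implication $(\Leftarrow)$ is immediate, so the plan is to establish the forward one: assuming $-\infty<\zeta^J<\infty$, I would prove $\zeta^J=\zeta^*$ with both values finite. First I would record the free half of the equality. Since $K\cap J\subseteq J$, every feasible point of ${\rm COP}(K\cap J)$ is feasible for ${\rm COP}(J)$, so minimizing over the larger set gives $\zeta^J\le\zeta^*$ unconditionally; hence the real content is the reverse bound $\zeta^*\le\zeta^J$ together with $\zeta^*<\infty$. Two structural observations drive the argument: by (A) the convex hull $\co(K)$ is a convex cone, and since each constraint $\inprod{Q^i}{X}=0$ is homogeneous, $J=\co(K)\cap\{X:\inprod{Q^i}{X}=0\ \forall i\}$ is itself a convex cone; by (B) every $X\in J$ is a \emph{finite} convex combination of elements of $K\cap J$.

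Next I would fix an arbitrary feasible $X$ for ${\rm COP}(J)$ — working with infima rather than minimizers, since attainment is not assumed — and use (B) to write $X=\sum_{j=1}^p\lambda_jY_j$ with $\lambda_j>0$, $\sum_j\lambda_j=1$, and $Y_j\in K\cap J$. Set $h_j:=\inprod{H^0}{Y_j}$, the $(e_1^\tau,e_1^\tau)$ diagonal entry of the rank-one matrix $Y_j$; being a square it satisfies $h_j\ge0$, and $\sum_j\lambda_jh_j=\inprod{H^0}{X}=1$. I would split the indices into $S_+=\{\,j:h_j>0\,\}$ and $S_0=\{\,j:h_j=0\,\}$, noting $S_+\ne\emptyset$. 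For $j\in S_+$ the rescaled matrix $Y_j/h_j$ again lies in $K$ by the cone property (A) and still satisfies the homogeneous constraints, so $Y_j/h_j\in K\cap J$ with $\inprod{H^0}{Y_j/h_j}=1$, i.e.\ it is feasible for ${\rm COP}(K\cap J)$; consequently $\inprod{Q^0}{Y_j}=h_j\inprod{Q^0}{Y_j/h_j}\ge h_j\zeta^*$. As $S_+\ne\emptyset$, this also exhibits a feasible point of ${\rm COP}(K\cap J)$, so $\zeta^*<\infty$.

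The remaining and principal difficulty is the ``recession'' block $S_0$: here $h_j=0$ blocks any normalization, and the $Y_j$ are not feasible for ${\rm COP}(K\cap J)$. To control them I would invoke $\zeta^J>-\infty$ and prove that \emph{every} $Y_0\in J$ with $\inprod{H^0}{Y_0}=0$ has $\inprod{Q^0}{Y_0}\ge0$. Indeed, if some such $Y_0$ had $\inprod{Q^0}{Y_0}<0$, then fixing any feasible $\tilde Y$ for ${\rm COP}(J)$ (one exists because $\zeta^J<\infty$), the ray $\tilde Y+tY_0$ stays in $J$ for all $t\ge0$ (as $J$ is a convex cone), keeps $\inprod{H^0}{\tilde Y+tY_0}=1$, and sends $\inprod{Q^0}{\tilde Y+tY_0}\to-\infty$, contradicting $\zeta^J>-\infty$. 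Applying this to each $Y_j$ with $j\in S_0$ gives $\inprod{Q^0}{Y_j}\ge0$, and combining the two blocks yields
\[
\inprod{Q^0}{X}=\sum_{j\in S_+}\lambda_j\inprod{Q^0}{Y_j}+\sum_{j\in S_0}\lambda_j\inprod{Q^0}{Y_j}\ \ge\ \zeta^*\sum_{j\in S_+}\lambda_jh_j+0=\zeta^*.
\]
Since $X$ was an arbitrary feasible point, taking the infimum gives $\zeta^J\ge\zeta^*$, which with $\zeta^J\le\zeta^*$ forces $\zeta^J=\zeta^*$; finiteness is then inherited from $-\infty<\zeta^J<\infty$. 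The step I expect to be most delicate is exactly this handling of the degenerate directions in $S_0$, where the cone/homogeneity structure from (A) and the lower-boundedness hypothesis must be used together, complicated by the fact that the relaxation need not attain its optimum.
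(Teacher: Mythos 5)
Your argument is correct. Note that the paper itself gives no proof of this theorem---it is quoted from \cite{kim2020geometrical} (Theorem~3.1) and \cite{arima2024further}---so the comparison is with the proof in those references, and your proposal follows essentially the same route: decompose a feasible $X\in J$ into a convex combination of points of $K\cap J$ using (B), normalize the components with $\langle H^0,Y_j\rangle>0$ via the cone property (A) to compare against $\zeta^*$, and show that the components with $\langle H^0,Y_j\rangle=0$ (the recession directions of the feasible region) have nonnegative objective value because otherwise $\zeta^J=-\infty$. All the supporting facts you invoke check out: $J$ is a convex cone since $\mathrm{co}(K)$ is and the constraints $\langle Q^i,X\rangle=0$ are homogeneous, and $\langle H^0,Y\rangle\ge 0$ on $K$ since it is a diagonal entry of a PSD matrix.
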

\begin{remark}
    The geometric condition ${\rm co}(K \cap J) = J$ plays a crucial role in ensuring the exactness of the convex reformulation. This condition has been further studied in \cite{kim2020geometrical, arima2024further}, where its implications and sufficient conditions are examined in detail. In particular, a sufficient condition for Condition \textbf{(B)} to hold is that $J$ is a face of ${\rm co}(K)$.
\end{remark}

\begin{remark}\label{remark-cone}
When $\tau = 1$, $\cA=\cA_1$ and $u(x) = (x_0; \ldots; x_n)$, the convex conic reformulation ${\rm COP}(J)$ specializes to two well-known cases, depending on the choice of $D$:
    \[
    \begin{aligned}
& D = \mathbb{R}^n 
&\Longrightarrow\; 
\D &= \mathbb{R}_+\times\mathbb{R}^n
&\Longrightarrow\;& 
\;\mathrm{co}(K) = \mathbb{S}_+^{n+1}, \quad &\text{(SDP)}
\\
& D = \mathbb{R}_+^n 
&\Longrightarrow\; 
\D &= \mathbb{R}_+^{n+1}
&\Longrightarrow\;& 
\;\mathrm{co}(K) = \mathrm{CPP}^{n+1},\quad &\text{(CPP)}
\end{aligned}
    \]
    where $\mathrm{CPP}^{n+1} = \mathrm{co}\left\{xx^\top \mid x \in \mathbb{R}_+^{n+1} \right\}$ is the cone of completely positive matrices~\cite{berman2003completely}.
\end{remark}

\subsection{Polyhedral-SDP relaxation}

Although ${\rm COP}(J)$ serves as a convex reformulation of the original nonconvex polynomial optimization \eqref{eq-POP}, it may still be intractable because ${\rm co}(K)$ can be very complex, where $K$ is 
defined in \eqref{eq-K}. For example, testing membership in the cone $\mathrm{CPP}^{n+1}$ is NP‐hard in general \cite{dickinson2014computational}. In this subsection, we introduce tractable relaxations of ${\rm COP}(J)$ that yield computationally tractable lower bounds for $\zeta^*$.

To obtain a tractable approximation, we replace ${\rm co}(K)$ by the intersection of three cones: the positive semidefinite (PSD) cone $\mathbb{S}_+^\mathcal{A}$, a problem-dependent polyhedral cone $\mathcal{P}^\mathcal{A} \subseteq \mathbb{S}^\mathcal{A}$, and the consistency cone:
\begin{equation*}
\mathcal{L}^{\mathcal{A}} = \Big\{ X \in \mathbb{S}^{\mathcal{A}} \mid X_{\alpha,\beta} = X_{\gamma,\delta} \;\text{if } \alpha + \beta = \gamma + \delta \Big\}.
\end{equation*}
The equality constraints in $\mathcal{L}^\A$ are generated
from the fact that $x^{\alpha+\beta}=x^{\gamma+\delta}$
if $\alpha+\beta=\gamma+\delta.$

As we shall show in Section~\ref{subsec-FR}, many of the equality constraints defining $J$ exhibit a facial structure and can be compactly written as $AX = 0$ for some full-row-rank matrix $A\in\R^{m\times \cA}$. In this paper, we exploit this structure and consider the following polyhedral-SDP relaxation:
\begin{equation}\label{eq-SDP}
\zeta^* \geq \zeta^{\rm relax} 
= \min \left\{ \langle Q^0, X \rangle \;\middle|\;
\begin{aligned}
&\langle H^0, X \rangle = 1, \quad
AX = 0, \;
\langle Q^i, X \rangle = 0 \; \forall i \in [\ell], \\
&X \in \mathbb{S}_+^\mathcal{A} \cap \mathcal{P}^\mathcal{A} \cap \mathcal{L}^\mathcal{A}
\end{aligned}
\right\}.\tag{P}
\end{equation}
The integer $\tau$ such that $\A\subseteq \A_\tau$ is called the relaxation order.
This formulation covers a range of convex relaxations—for example, taking
\begin{align*}
        \text{(SDP)}\quad&\quad \mathrm{co}(K) = \mathbb{S}_+^{n+1} 
        \hspace{27pt}\Longrightarrow\quad
        \mathcal{P}^\mathcal{A}=\bS^{\cA},
        \\
        \text{(DNN)}\quad&\quad \mathrm{co}(K) = {\rm CPP}^{n+1}
        \quad \Longrightarrow \quad
        \mathcal{P}^\mathcal{A}=\bN^{\cA},
    \end{align*}
yields the standard SDP and doubly‐nonnegative (DNN) relaxations, respectively. Here $\bN^\cA$ denotes the
cone of nonnegative matrices in $\S^\cA$. Moreover, by selecting \(\cP^{\cA}\) appropriately, one can obtain relaxations for partial‐nonnegativity constraints and other structured domains. If the original polynomial optimization problem~\eqref{eq-POP} is already homogeneous on a conic domain, one may skip the homogenization step and obtain the relaxation problem~\eqref{eq-SDP} without the fixed constraint $\langle H^0,X\rangle=1$. 

To illustrate our framework, we present Example~\ref{example-pop} 
in Appendix~\ref{sec-A}
the quadratic POP from \cite[Example 2]{pena2015completely} as an example to demonstrate its CPP reformulation, DNN relaxation, and facial structure.

\subsection{Low-rank augmented Lagrangian method (ALM)}
Our main question is how to efficiently solve the relaxed problem \eqref{eq-SDP}, which includes both SDP and DNN relaxations as special cases. For the ease of discussion, 
we focus on the case $\cA = \cA_\tau$, i.e., the set of all 
exponents with degrees equal to $\tau$.
We identify four major challenges in solving \eqref{eq-SDP}:
\begin{enumerate}
    \item A huge number of $\Omega(mn^\tau +\ell)$ equality constraints; 
    \item A huge number of $\Omega(n^{2\tau})$ nonnegativity constraints when $D=\R^n_+$;
    \item A huge number of $\Omega(n^{2\tau})$ consistency constraints for fixed $\tau\geq 2$;
    \item Failure of Slater’s condition and strong duality whenever $A \neq 0$.
\end{enumerate}

In this paper, we propose RiNNAL-POP, a low-rank augmented Lagrangian method for solving the general polyhedral-SDP relaxation \eqref{eq-SDP}. Our approach builds upon the key ideas of RiNNAL~\cite{RiNNAL} and RiNNAL+~\cite{RiNNAL+}, but extends them to handle broader constraint structures and introduces new techniques for exploiting consistency and facial structure within a unified low-rank framework.
RiNNAL-POP employs a hybrid strategy that alternates between two phases to efficiently solve the following ALM subproblem:
\begin{equation}\label{eq-ALMsub-CVX}
    \min\left\{
    \begin{aligned}
        \langle Q^0,X\rangle 
        &+ \frac{\sigma}{2} \|\sigma^{-1}y-\Q(X)\|^2 \\
        &+ \frac{\sigma}{2}\|X-\sigma^{-1}W-\Pi_{{\P}}(X-\sigma^{-1}W)\|^2
    \end{aligned}
    \;\middle|\; AX=0,\; X\in \bS^\cA_+ \right\}, \tag{CVX}
\end{equation}
where $\sigma > 0$ is the penalty parameter, $y$ and $W$ are the Lagrange multipliers for the constraints $\Q(X) = 0$ and $X \in \mathcal{P}$, respectively, and
\begin{equation*}
    \Q(X):=\left[\langle Q^1,X\rangle,\dots,\langle Q^\ell,X\rangle\right]^\top,
    \quad 
    \P:=\Big\{X\in \P^\cA\cap \L^\cA
    \mid \langle H^0, X \rangle = 1
    \Big\}.
\end{equation*}
RiNNAL-POP alternates between a low‐rank phase and a convex‐lifting phase to solve \eqref{eq-ALMsub-CVX} efficiently.

\medskip
\noindent\textbf{Low-rank phase.} 
Suppose the ALM subproblem \eqref{eq-ALMsub-CVX} admits an optimal solution of rank $r$. To exploit this potential low-rank structure, we apply the Burer--Monteiro (BM) factorization $X = RR^\top$ with $R \in \mathbb{R}^{\cA \times r}$, and rewrite \eqref{eq-ALMsub-CVX} as the following nonconvex problem:
\begin{equation}\label{eq-ALMsub-LR}
    \min\left\{ 
    \begin{aligned}
       & \langle Q^0,RR^\top\rangle 
        + \frac{\sigma}{2} \|\sigma^{-1}y-\Q(RR^\top)\|^2 \\
        &+ \frac{\sigma}{2}\|RR^\top-\sigma^{-1}W-\Pi_{{\P}}(RR^\top-\sigma^{-1}W)\|^2
    \end{aligned}
    \;\middle|\;
    AR=0, \; R\in\R^{\A\times r} \right\}.\tag{LR}
\end{equation}
We solve this nonconvex subproblem by using the projected gradient (PG) method on $R$.

\medskip

\noindent\textbf{Convex-lifting phase.}  
Since the low-rank subproblem \eqref{eq-ALMsub-LR} is nonconvex, the iterate $R_t$ may converge to a suboptimal stationary point. Moreover, the appropriate rank is not known in advance and typically requires careful tuning. To address both issues, the algorithm switches to a convex-lifting phase once $R_t$ reaches near-stationarity. In this phase, we perform a PG step on the original convex problem \eqref{eq-ALMsub-CVX}, initializing at $X_t = R_t R_t^\top$. We then factorize the updated matrix $X_{t+1}$ to obtain a new iterate $R_{t+1}$, which serves as the starting point for the next low-rank phase.

This lifting step guarantees a monotonic decrease in the objective value of the ALM subproblem and helps ensuring the convergence to the global optimal solution of the ALM subproblem. Unlike rank-truncation heuristics—which may increase the function value when discarding small singular values -- the lifting step automatically adjusts the rank without requiring manual tuning. The idea of using a PG step to automatically update the rank has been explored in prior works such as \cite{lee2022escaping,RiNNAL+}. In particular, the numerical experiments in \cite{RiNNAL+} demonstrate that this approach performs remarkably well and typically identifies the correct rank after only a few PG steps. In this paper, we provide a theoretical justification for this phenomenon: we prove that the PG iterates provably identify the optimal rank via manifold identification techniques. Moreover, as we will show in Subsection~\ref{subsec-CVX-phase}, the projection required in each PG step has a closed-form expression, and the projection onto the PSD cone involves only a single eigenvalue decomposition, making each step computationally affordable.

\subsection{Summary of our contributions}

Our paper’s contributions are summarized as follows:
\begin{enumerate}
    \item We revisit the polyhedral–SDP relaxation framework of \cite{kim2020geometrical} and explicitly provide its equivalence and embedding relationships with RLT, SDP, DNN, and moment–SOS relaxations for general nonconvex polynomial optimization problems with equality and inequality constraints. 
    \item We leverage the facial structure present in broad families of linear constraints (with RLT and moment–SOS relaxations as two key examples) that expose a face of the semidefinite cone via the equality constraint $AX=0$.
    While this structure destroys Slater’s condition and impedes strong duality and numerical stability, our framework overcomes these drawbacks by (1) absorbing the face into the Burer–Monteiro factorization through the smooth subspace constraint $AR=0$ and (2) fully restoring dual feasibility by explicitly recovering all dual variables from any primal solution.
    \item We design RiNNAL-POP, a two-phase augmented Lagrangian method for solving the polyhedral–SDP relaxation \eqref{eq-SDP} of general polynomial optimization problems. In phase one, it solves a low-rank ALM subproblem on the subspace $AR=0$. In phase two, it applies a single projected gradient update in the full matrix space, where the projection onto the PSD cone with affine constraints admits an explicit closed-form expression. RiNNAL-POP computes the projection onto the intersection of the polyhedral cone and the consistency cone efficiently by averaging identical monomial entries, and enjoys global convergence under mild conditions. Through manifold identification, we prove that the optimal rank is identified automatically, removing the need for manual rank tuning.
    \item We extend RiNNAL‑POP to solve general $\tau$th‑order moment–SOS relaxations by recasting the problem in a split augmented‑Lagrangian framework with auxiliary variables for the PSD localization matrices.
    \item  We conduct extensive computational experiments to evaluate the performance of our RiNNAL-POP algorithm for solving the polyhedral–SDP  relaxation \eqref{eq-SDP} and moment-SOS relaxations of various polynomial optimization problems.
\end{enumerate}

\subsection{Organization}
This paper is organized as follows. In Section~\ref{sec-relaxation}, we present our unified polyhedral–SDP framework, show how RLT, SDP, DNN, and moment–SOS relaxations fit within this framework, and examine the hidden facial structure. In Section~\ref{sec-alg}, we introduce the augmented Lagrangian framework with the low-rank factorization for solving \eqref{eq-SDP}. 
In Section~\ref{sec-extension}, we present the extension of the algorithm
in Section~\ref{sec-alg} to solve moment-SOS relaxation problems.
Section \ref{sec-exp} presents several experiments to demonstrate the efficiency and extensibility of the proposed method. Finally, we conclude the paper in Section \ref{sec-conclusion}.

\subsection{Related works}

Interior-point methods (IPMs), as implemented in solvers such as SDPT3 \cite{TTT}, SeDuMi \cite{sturm1999using}, and DSDP \cite{benson2008algorithm},
can be applied to solve \eqref{eq-SDP}. While they 
are highly effective for small to medium-size SDPs,
they scale poorly for large SDP problems, especially for 
the problem \eqref{eq-SDP} due to their $\mathcal{O}(n^{6\tau})$ per-iteration complexity and high memory demands.
To improve scalability, first-order methods based on the alternating direction method of multipliers (ADMM) and the augmented Lagrangian method (ALM) have been widely adopted. For example, SDPNAL+ \cite{SDPNAL,SDPNALp1,SDPNALp2} has shown strong performance for SDP and DNN problems with matrix variables of moderate dimensions. 
Low‐rank solvers that factorize $X = RR^\top$ \cite{wang2023decomposition,wang2023solving,monteiro2024low,tang2024feasible,han2024low} can be quite effective when the solution is low rank and the number of constraints is moderate. 
Several low-rank solvers have also been proposed for SDPs \cite{wang2023decomposition,wang2023solving,monteiro2024low,tang2024feasible,han2024low}, based on factorizing the matrix variable as $X = RR^\top$ to reduce dimensionality. These methods are effective when the number of constraints is moderate and the solution is of low rank.
However, all the methods mentioned above face significant challenges on large-scale instances—particularly DNN relaxations—due to the rapidly growing number of equality and consistency constraints. Moreover, they may also suffer from the failure of strong duality and do not exploit the possible facial structure of the feasible set.

A closely related line of work is our previous development of RiNNAL \cite{RiNNAL} and RiNNAL+ \cite{RiNNAL+}, which are Riemannian ALM for DNN and SDP‐RLT relaxations of mixed‐binary quadratic programs. These solvers leverage the low-rank structure of the solution and penalize general inequality and equality constraints while preserving equalities that define the PSD cone’s reduced face and enforce binary constraints in the ALM subproblem. After low-rank factorization, the resulting feasible region of the ALM subproblem becomes an algebraic variety with favorable geometric properties. RiNNAL+ outperforms other solvers on large‐scale DNN and SDP‐RLT problems, but they are limited to mixed‐binary quadratic programs, which do not include the consistency constraints corresponding to $\mathcal{L}^\mathcal{A}$ 
in a general polynomial optimization problem.

\subsection{Notation}\label{subsec-notations}

For any set \(S\), we denote 
its convex hull by $\operatorname{co}(S)$.
Fix a nonnegative integer $\tau$.  We define
\[
\cA_\tau \;=\;\bigl\{\alpha=(\alpha_0,\alpha_1,\dots,\alpha_n)\in\mathbb{N}^{n+1}:\;|\alpha|=\alpha_0+\alpha_1+\cdots+\alpha_n=\tau\bigr\}
\]
to be the set of all degree‐$\tau$ monomial exponents in $\mathbb{R}^{n+1}$ with cardinality 
$
\barn_\tau =\binom{n+\tau}{\tau}.
$
Let $e_1^\tau=(\tau,0,\dots,0)\in\cA_\tau$ denote the multi‐index with $\alpha_0=\tau$ and all other components zero.
We write
$
[w]_\tau \;\in\;\mathbb{R}^{\barn_\tau}
$
for the column vector whose entries are all monomials in $w=(w_1,\dots,w_n)$ of degree $\le\tau$, arranged in some fixed ordering.
Let $p:\mathbb{R}^n\to\mathbb{R}$ be any polynomial such that 
$p(w) = \sum_{\alpha} p_\alpha w^\alpha$. For $\tau\ge\bigl\lceil\deg(p)/2\bigr\rceil$, the homogenization of $p$ of degree $2\tau$ is defined by
\begin{equation}\label{eq-notation-homo}
\bar p(x)\;=\;\bar p(x_0,w)\;=\; x_0^{2\tau}\; p\Bigl(\frac{w}{x_0}\Bigr) \;=\;
\sum_{\alpha} p_\alpha w^\alpha x_0^{2\tau-|\alpha|},
\qquad x=(x_0,w)\in\mathbb{R}^{n+1}.
\end{equation}
By construction, $\bar p$ is homogeneous of degree $2\tau$, and in particular satisfies $\bar p(1,w)=p(w)$.
Whenever $\cA\subseteq \cA_\tau$ is a subset of cardinality $|\cA|$, we write
\[
\bS^{\cA}\;=\;\bS^{|\cA|},
\qquad
\mathbb{R}^{\cA}\;=\;\mathbb{R}^{|\cA|},
\qquad 
\]
to emphasize that the matrices in $\bS^{|\cA|}$ and vectors in $\mathbb{R}^{|\cA|}$
are indexed by $\cA$.

\section{Polyhedral-SDP relaxations}\label{sec-relaxation}

In this section, we present a unified framework for constructing polyhedral‐SDP relaxations of polynomial optimization problems. We begin by showing how any set of polynomial inequalities can be rewritten as equalities via either standard slack variables or squared‐slack variables, thereby casting the problem in the general form of \eqref{eq-POP} and enabling its direct polyhedral-SDP relaxation \eqref{eq-SDP}. We then demonstrate that the classical moment–SOS hierarchy yields precisely the same polyhedral‐SDP relaxation, so that each $\tau$th‐order moment–SOS program can be interpreted as a facially‐exposed SDP of the form \eqref{eq-SDP}. Next, we explore how linear constraints of the form $AX=0$ (arising, for example, from RLT and or moment–SOS relaxations) expose a proper face of the semidefinite cone $\bS_+^\cA$, destroying Slater’s condition and giving rise to degeneracies in standard solvers.

\subsection{Inequality reformulation via slack variables}\label{subsec-ineq}

Many polynomial optimization problems include inequality constraints. For clarity, we focus here on the case without equality constraints because extending to problems with both equalities and inequalities is straightforward. 

Below, we describe two equivalent reformulations that convert each inequality into an equality, so that the problem can be written in the form of \eqref{eq-POP} and then relaxed to \eqref{eq-SDP}. The overall procedure is summarized in Figure~\ref{fig:reformulation}.
Let $\cA \subseteq \cA_\tau$ be a monomial index set chosen so that $\cA \times \cA$ contains all monomial exponents appearing in the homogenized polynomials $\bar f_i(x)$ for $i = 0,\dots,m$. We define two extended index sets:
\[
\cA_1 \;=\; (\cA \times \{0\}) \;\cup\; \bigl(e_1^{\tau - 1} \times \{1\}\bigr),
\qquad
\cA_2 \;=\; (\cA \times \{0\}) \;\cup\; \bigl(e_1^{\tau - 2} \times \{2\}\bigr),
\]
which correspond, respectively, to the standard‐slack and squared‐slack formulations. If the same polyhedral set $\mathcal{P}^{\cA_1} = \mathcal{P}^{\cA_2}$ is imposed in both cases, the resulting polyhedral‐SDP relaxations are identical. Verifying this equivalence amounts to a straightforward algebraic check and is omitted for brevity.

In practice, the choice between these two reformulations depends on the domain $D$. When $D = \mathbb{R}^n$, one typically uses the squared‐slack reformulation corresponding to $\cA_2$, which leads directly to an SDP relaxation. When $D = \mathbb{R}^n_+$, the standard‐slack reformulation corresponding to $\cA_1$ is more appropriate, yielding a DNN relaxation.

\begin{center}
\begin{figure}[ht]
\begin{tikzpicture}[node distance=1.5cm and -4cm]
  \node[block,text width=8cm] (prob) {
    $\displaystyle \min \Big\{ f_0(w) \mid   f_i(w)\geq 0,\; i\in [m] ,\; w\in D\Big\}$
  };
  \node[block, below left=of prob,text width=7cm]  (slack) {
  Reformulation via \\
  \tb{standard slack variables}\\[-3pt]
  \noindent\rule{\linewidth}{1pt}
  \begin{equation*}\label{eq-ieq-slack}
    \min \left\{ f_0(w) \;\middle|\;
    \begin{aligned}
        &f_i(w)-u_i= 0,\; i\in [m],\\
        &(w,u)\in D\times\R^{m}_+
    \end{aligned}
    \right\}
\end{equation*}
  };
  \node[block, below right=of prob,text width=7cm] (sq)    
  {
  Reformulation via \\
  \tb{squared slack variables}\\[-3pt]
  \noindent\rule{\linewidth}{1pt}
  \begin{equation*}
    \min \left\{ f_0(w) \;\middle|\;
    \begin{aligned}
        &f_i(w)-v_i^2= 0,\; i\in [m] ,\\
        &(w,v)\in D\times \R^{m}
    \end{aligned}
    \right\}
\end{equation*}
  };;
  \node[block, below=of slack,text width=5cm]    (mom1)  
  {
  polyhedral-SDP relaxation\\[-3pt]
  \noindent\rule{\linewidth}{1pt}
   $D=\R^n_+\rightarrow $ DNN relaxation
  };
  \node[block, below=of sq,text width=5cm]       (mom2)  
  {
  polyhedral-SDP relaxation\\[-3pt]
  \noindent\rule{\linewidth}{1pt}
  $\phantom{_+}$$D=\R^n \rightarrow $ SDP relaxation
  };

  \draw[arrow] (prob)  -- (slack);
  \draw[arrow] (prob)  -- (sq);
  \draw[arrow] (slack) 
  -- node[midway, right]   {$\A_1$}
  (mom1);
  \draw[arrow] (sq)    -- node[midway, right]   {$\A_2$} (mom2);
  \draw[<->, thick] (mom1) -- node[midway, above]   {equivalent} (mom2);
\end{tikzpicture}
    \caption{Reformulation and relaxation of polynomial problems with inequality constraints.}
    \label{fig:reformulation}
\end{figure}
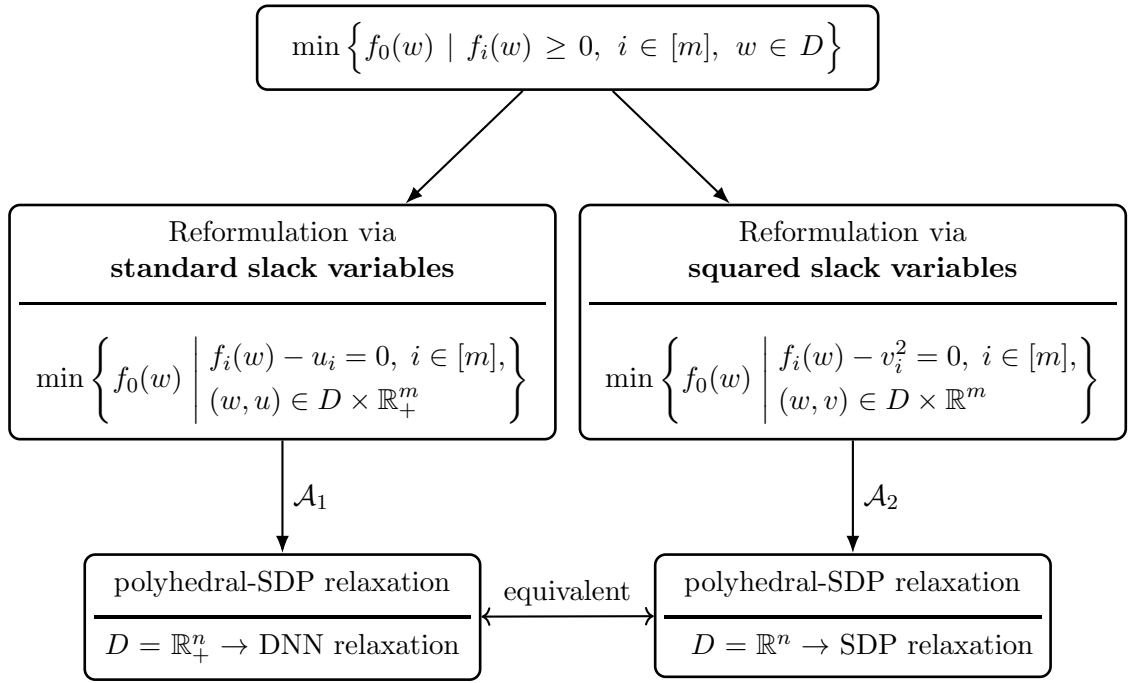
\end{center}

\subsection{Moment-SOS relaxations}\label{subsec-mom-sos-relax}
An alternative to the convex‐conic reformulation \eqref{eq-COP-convex} is provided by the moment–SOS hierarchy~\cite{lasserre2001global}, which approximates the general polynomial optimization problem
\begin{equation}\label{eq-eq-ieq-general}
    \min\Big\{f_0(w)\mid g_i(w)=0,\; i\in[\ell],\; h_j(w)\ge0,\; j\in[p]\Big\}
\end{equation}
via a sequence of semidefinite relaxations whose optimal values converge to the global optimum under mild assumptions as $\tau\to\infty$~\cite{lasserre2001global,tran2025convergence}.
Here we assume that $D$ can be expressed entirely by a finite collection of polynomial equalities and inequalities, and hence the condition $w\in D$ is absorbed into the families $\{g_i\}$ and $\{h_j\}$.
To set up the $\tau$th‐order moment-SOS relaxation, choose
$$
\tau\ge\max\left\{\left\lceil \frac{\deg f_0}{2}\right\rceil,\; \left\lceil \frac{\deg g_i}{2}\right\rceil\left( i\in[\ell]\right),\;
\degg{h_j}:=\left\lceil\frac{\deg h_j}{2}\right\rceil \left( j\in[p]\right)\right\}.
$$
Then consider the homogenized monomials: 
\[
x=\begin{bmatrix}
    x_0\\
    x_1\\
    \vdots\\
    x_n
\end{bmatrix}=
\begin{bmatrix}
    1\\
    w_1\\
    \vdots\\
    w_n
\end{bmatrix},
\quad v(w) := [w]_\tau \in \mathbb{R}^{\bar{n}_\tau},
\quad u(x) :=x_0^\tau \;v\left(\frac{w}{x_0}\right)\in \mathbb{R}^{\bar{n}_\tau}.
\]
We refer the readers to Subsection~\ref{subsec-notations} for the definition of $[w]_\tau$ and $\barn_\tau$. For example, when $\tau=2$ and $n=2$, we have
\[
\begin{array}{cll}
    x   &=[x_0,x_1,x_2]^\top=[1,w_1,w_2]^\top&\in\R^3,\\[3pt]
    v(w)&=[1,w_1,w_2,w_1^2,w_1w_2,w_2^2]^\top&\in \R^6,\\[3pt]
    u(x)&=[x_0^2,x_0x_1,x_0x_2,x_1^2,x_1x_2,x_2^2]^\top&\in \R^6.
\end{array}
\]
Each equality constraint $g_i(w)=0$ (with $\deg g_i \le2\tau$) generates a collection of redundant equalities. Concretely, for $i\in[\ell]$, one enforces
\begin{equation}\label{eq-fi}
    g_i(w) \cdot [w]_{2\tau - \deg g_i } = 0
\qLRq
f_{i,k}(w)=0,\quad k\in[m_i]
\end{equation}
so that there are a total of $m=\sum_{i=1}^\ell m_i$ redundant polynomials $f_{i,k}(w)$ with $m_i := \barn_{2\tau -\deg g_i}$, and $i\in[\ell]$.  Each of these can be homogenized as
\[
\bar f_{i,k}(x) = \langle Q^{i,k}, u(x)u(x)^\top \rangle
\qLRq
f_{i,k}(w) = \langle Q^{i,k}, v(w)v(w)^\top \rangle
\]
with $Q^{i,k} \in \mathbb{S}^{\barn_\tau}$.
In the moment relaxation, one replaces
\[
u(x)\,u(x)^\top\;\longmapsto\;X\;\in\;\mathbb{S}_+^{\barn_\tau},
\]
and imposes the linear constraints
\[
\langle Q^{\,i,k},\,X\rangle = 0, 
\quad \;k\in [m_i],\; i\in [\ell].
\]
Each inequality $h_j(w)\ge0$ (with $\deg(h_j)\le\tau$) corresponds to a localizing moment matrix:
\[
\M_{h_j}(v(w)v(w)^\top):=h_j(w)[w]_{\tau-\degg{h_j}}[w]^\top_{\tau-\degg{h_j}}\succeq 0
\;\;\longmapsto\;\;
\M_{h_j}(X)\succeq0.
\]
Combining both types of constraints with $\langle H^0,X\rangle=1$ (to represent $x_0^{2\tau}=1$), the $\tau$th‐order moment‐SOS relaxation of \eqref{eq-eq-ieq-general} takes the following form, after relabeling
$\{Q^{i,k}: k\in[m_i], i\in [l]\}$ as $\{Q^i: i\in[m]\}$: 
\begin{equation}\label{eq-eq-ieq-general-moment}
\min\left\{\<Q^0,X\> \;\middle|\;
\begin{aligned}
    &\langle H^0, X \rangle=1,\quad
    \langle Q^i, X \rangle=0,\;i\in [m],\\
    &\M_{h_j}(X)\succeq 0,\; j\in[p],\quad 
    X\in\bS^{\cA_\tau}_+\cap\L^{\cA_\tau}
\end{aligned}
\right\}.
\end{equation}
In this subsection, we consider three canonical cases:
\begin{itemize}
  \item[$\bullet$] Equality-constrained POP ($D=\R^n$), 
  \item[$\bullet$] Nonnegativity‐constrained POP ($D=\R^n_+$), or 
  \item[$\bullet$] General inequality-constrained POP ($h_j(w)\ge0$).
\end{itemize}
These cases cover many practical applications, and more general domains can be addressed using the reformulation techniques introduced in Subsection~\ref{subsec-ineq}. 
In each case, the moment–SOS relaxation \eqref{eq-eq-ieq-general-moment} is equivalent to the polyhedral‐SDP relaxation obtained via slack or squared‐slack reformulations (as illustrated in Figure~\ref{fig:moment}). 
Note that the polyhedral‐SDP relaxation is tighter than the moment–SOS shown in Figure \ref{fig:moment-n} for the nonnegativity case holds true only when $\tau = 1$. Moreover, the monomial basis $\cA$ used in Figure \ref{fig:moment-i} differs for \eqref{eq-POPI} and (\text{POP$_i^{s}$}), which corresponds to the reformulation via standard slack variables shown in Figure~\ref{fig:reformulation}.

\begin{figure}[ht]
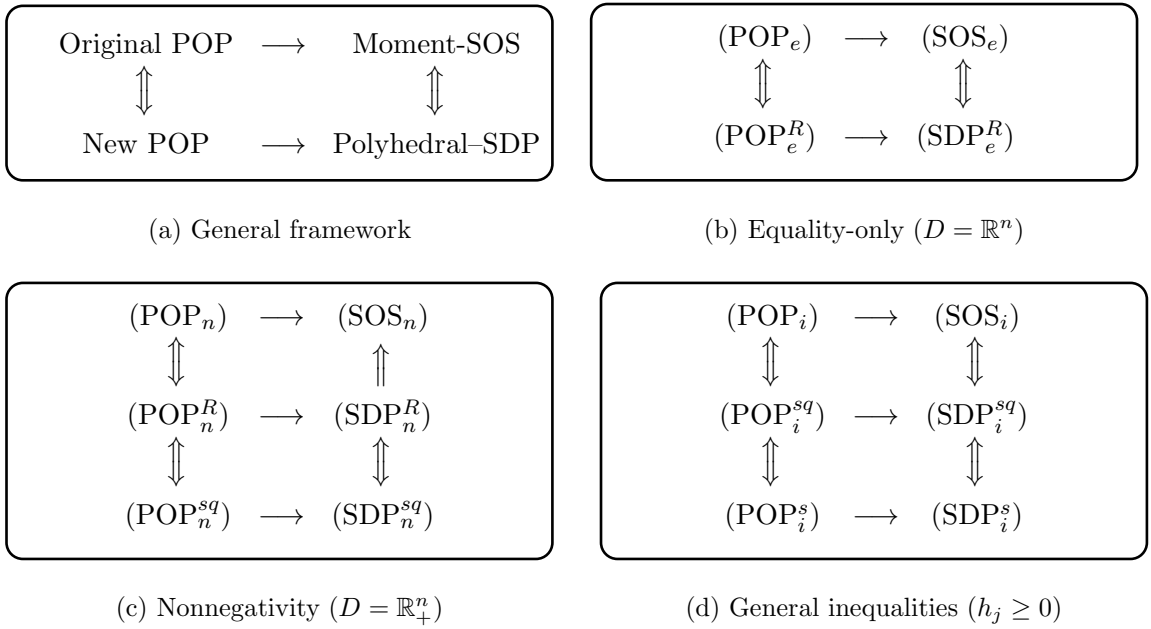

\begin{subfigure}[t]{0.48\textwidth}
    \centering
    \begin{tcolorbox}[colframe=black, colback=white, boxrule=1pt, arc=2mm, top=-7pt, bottom =3pt]
      \[
      \begin{array}{ccc}
    \text{Original POP} &\longrightarrow &  \text{Moment-SOS}\\
    \rotatebox{90}{$\Longleftrightarrow$} 
           & & 
         \rotatebox{90}{$\Longleftrightarrow$} \\
    \text{New POP} & \longrightarrow & \text{Polyhedral–SDP}
	\end{array}
      \]
    \end{tcolorbox}
    \subcaption{General framework}\label{fig:moment-general}
  \end{subfigure}
\hfill
  \begin{subfigure}[t]{0.48\textwidth}
    \centering
    \begin{tcolorbox}[colframe=black, colback=white, boxrule=1pt, arc=2mm, top=2pt]
      \[
      \begin{array}{ccc}
         \eqref{eq-POPE}  & \longrightarrow & \eqref{eq-SOSE} \\
         \rotatebox{90}{$\Longleftrightarrow$} 
           & & 
         \rotatebox{90}{$\Longleftrightarrow$} \\
         \eqref{eq-POPE-R} & \longrightarrow & \text{(SDP$_{e}^R$)}
      \end{array}
      \]
    \end{tcolorbox}
    \subcaption{Equality‐only ($D=\R^n$)}\label{fig:moment-e}
  \end{subfigure}
  \bigskip
  
   \begin{subfigure}[t]{0.48\textwidth}
    \centering
    \begin{tcolorbox}[colframe=black, colback=white, boxrule=1pt, arc=2mm, top=2pt]
      \[
      \begin{array}{ccc}
         \eqref{eq-POPN}  & \longrightarrow & \text{(SOS$_{n}$)} \\
         \rotatebox{90}{$\Longleftrightarrow$} 
           & & 
         \rotatebox{90}{$\Longrightarrow$} 
         \\
         \eqref{eq-POPN-R} & \longrightarrow & \eqref{eq-SDPN-R}\\
          \rotatebox{90}{$\Longleftrightarrow$} 
           & & 
         \rotatebox{90}{$\Longleftrightarrow$} \\
         \eqref{eq-POPN-SQ} & \longrightarrow & \text{(SDP$_{n}^{sq}$)}
      \end{array}
      \]
    \end{tcolorbox}
    \subcaption{Nonnegativity ($D=\R^n_+$)}\label{fig:moment-n}
  \end{subfigure}
  \hfill
  \begin{subfigure}[t]{0.48\textwidth}
    \centering
    \begin{tcolorbox}[colframe=black, colback=white, boxrule=1pt, arc=2mm, top=2pt]
      \[
      \begin{array}{ccc}
         \eqref{eq-POPI}  & \longrightarrow & \eqref{eq-SDPI} \\
         \rotatebox{90}{$\Longleftrightarrow$} 
           & & 
         \rotatebox{90}{$\Longleftrightarrow$} \\
         \eqref{eq-POPI-SQ} & \longrightarrow & \eqref{eq-SDP1-square}\\
         \rotatebox{90}{$\Longleftrightarrow$} 
           & & 
         \rotatebox{90}{$\Longleftrightarrow$} \\
         (\text{POP$_i^{s}$})   & \longrightarrow & (\text{SDP$_i^{s}$})
      \end{array}
      \]
    \end{tcolorbox}
    \subcaption{General inequalities ($h_j\ge0$)}\label{fig:moment-i}
  \end{subfigure}

  \caption{Equivalence between polyhedral-SDP relaxation and moment-SOS relaxation. Note that the polyhedral‐SDP relaxation \eqref{eq-SDPN-R} is tighter than the moment–SOS relaxation \text{(SOS$_{n}$)}
  in (c)  holds true only for the case $\tau=1$. We refer the reader to the following subsections for the meaning of various problems.}
  \label{fig:moment}
\end{figure}

\subsubsection{Equality-constrained problem}
Consider the polynomial optimization problem with only equality constraints:
\begin{equation}\label{eq-POPE}
    \min \Big\{ f_0(w) \mid  g_i(w) = 0, i\in [\ell],\; w\in\R^n\Big\}.\tag{POP$_{e}$}
\end{equation}
The $\tau$-order moment relaxation of the form
\eqref{eq-eq-ieq-general-moment} is given by
\begin{equation}\label{eq-SOSE}
\min\left\{\<Q^0,X\> \mid\ \langle H^0, X \rangle=1,\; \langle Q^i, X \rangle=0,\;i\in [m],\; X\in\bS^{\cA}_+\cap\L^\cA \right\}.
\tag{SOS$_{e}$}
\end{equation}
Consider the equivalent problem of \eqref{eq-POPE} with redundant constraints:
\begin{equation}\label{eq-POPE-R}
    \min \Big\{ f_0(w) \mid   f_i(w) = 0, i\in [ m],\; w\in\R^n\Big\},\tag{POP$_{e}^{R}$}
\end{equation}
where $f_i(w)=0$ collects all the equalities generated by \eqref{eq-fi}. Then one can verify that the polyhedral-SDP relaxation of the form
\eqref{eq-SDP} for \eqref{eq-POPE-R} coincides with \eqref{eq-SOSE}.

\subsubsection{Nonnegativity-constrained problem} 
Consider the problem with nonnegative variables:
\begin{equation}\label{eq-POPN}
    \min \Big\{ f_0(w) \mid  g_i(w) = 0, i\in [\ell],\; w\in\R^n_+\Big\}.\tag{POP$_{n}$}
\end{equation}
Similarly, the equivalent problem of \eqref{eq-POPN} with redundant constraints is given as:
\begin{equation}\label{eq-POPN-R}
    \min \Big\{ f_0(w) \mid   f_i(w) = 0, i\in [ m],\; w\in\R^n_+\Big\}.\tag{POP$_{n}^{R}$}
\end{equation}
Its polyhedral‐SDP relaxation  of the form \eqref{eq-SDP} is given by
\begin{equation}\label{eq-SDPN-R}
\min\left\{\<Q^0,X\> \mid\ \langle H^0, X \rangle=1,\; \langle Q^i, X \rangle=0,\; i\in [m],\; X\in\bS^{\cA}_+\cap \bN^\cA\cap\L^\cA  \right\}.\tag{SDP$_{n}^{R}$}
\end{equation}
We can also replace $w$ in \eqref{eq-POPN}
by the product of the squared-variable $s\in\R^n$, yielding
\begin{equation}\label{eq-POPN-SQ}
    \min \Big\{ f_0(s\circ s) \mid   f_i(s\circ s) = 0,\; i\in [ m],\; s\in\R^n\Big\}.\tag{POP$_{n}^{sq}$}
\end{equation}
By choosing \(\bar\cA = 2\,\cA_\tau\) and enforcing \(\P^{\bar\cA} = \bN^{\bar\cA}\), the polyhedral‐SDP relaxation 
of \eqref{eq-POPN-SQ}
coincides with \eqref{eq-SDPN-R}. In particular, when $\tau=1$, \eqref{eq-SDPN-R} is tighter than the first‐order moment–SOS relaxation of \eqref{eq-POPN} due to the additional nonnegativity constraints.

\subsubsection{Inequality-constrained problem} 
Consider the polynomial optimization problem with only inequality constraints:
\begin{equation}\label{eq-POPI}
    \min \Big\{ f_0(w) \mid  h_j(w)\geq 0,\; j\in[p],\; w\in\R^n\Big\}.\tag{POP$_{i}$}
\end{equation}
Its $\tau$th‐order moment–SOS relaxation is given by
\begin{equation}\label{eq-SDPI}
\zeta^{(1)}=\min\left\{\<Q^0,X\> \mid \langle H^0, X \rangle=1,\;  \M_{h_j}(X)\succeq 0,\, j\in[p],\;  X\in\bS^{\cA}_+\cap \L^\cA \right\}.\tag{SOS$_{i}$}
\end{equation}
Equivalently, we can introduce squared‐slack variables $s_j\in\mathbb{R}$ and rewrite \(\eqref{eq-POPI}\) as
\begin{equation}\label{eq-POPI-SQ}
\min \Big\{ f_0(w) \mid  h_j(w)-s_j^2= 0,\; j\in[m],\; (w;s)\in\R^{n+m}\Big\}.\tag{POP$_{i}^{sq}$}
\end{equation}
Let $\degg{h_j} = \lceil {\rm deg}(h_j)/2 \rceil$ for $j\in [p]$.
Choose the extended monomial basis
\begin{equation*}  
    v^{\bar \cA}(w):=\begin{bmatrix}
        [w]_\tau\\ 
        s_1 [w]_{\tau-\degg{h_1}}\\
        \vdots\\
        s_m [w]_{\tau-\degg{h_p}}\\
    \end{bmatrix}\in\R^{\bar \cA},\quad 
    \left|\bar \cA\right|=\barn_\tau +\underbrace{\sum_{j=1}^p 
    (\tau-\degg{h_j})}_{:=\bar p} 
\end{equation*}
and form the lifted moment matrix
\begin{equation*}
    v^{\bar \cA}(w)\left(v^{\bar \cA}(w)\right)^\top \in\bS^{\bar \cA}
    \quad \longmapsto \quad 
    Y:=\begin{pmatrix}
        Y^{11} & Y^{12}  \\
        Y^{21} & Y^{22} 
    \end{pmatrix}\in
    \begin{pmatrix}
        \bS^{\barn_\tau} & \R^{\barn_\tau\times {\bar p}}  \\
        \R^{{\bar p} \times \barn_\tau} & \bS^{{\bar p}} 
    \end{pmatrix}
    .
\end{equation*}
The corresponding polyhedral‐SDP relaxation of the form \eqref{eq-SDP} for
\eqref{eq-POPI-SQ} is
\begin{equation}\label{eq-SDP1-square}
\zeta^{(2)}=\min\left\{\<Q^0,Y^{11}\> \mid\ \langle H^0,Y^{11}\rangle=1,\, \H_j(Y)= 0,\, j\in[p],\;   Y\in\bS^{\bar \cA}_+ \cap \L^{\bar \cA}\right\},\tag{SDP$_{i}^{sq}$}
\end{equation}
where each linear map \(\H_j:\mathbb{S}^{\bar\cA}\to\mathbb{R}^{2(\tau-\degg{h_j})}\) encodes the identity
\begin{equation*}
\left(h_j(w)-s_j^2\right)
[w]_{2(\tau-\degg{h_j})} = 0
\quad\Longrightarrow\quad
\H_j(Y) = 0.
\end{equation*}
The following theorem establishes the equivalence of \eqref{eq-SDPI} and \eqref{eq-SDP1-square} and derives the reduced formulation.
\begin{theorem}
(1) The two relaxations \(\eqref{eq-SDPI}\) and \(\eqref{eq-SDP1-square}\) are equivalent, i.e., $\zeta^{(1)}=\zeta^{(2)}$.
\\
(2) The relaxation \(\eqref{eq-SDP1-square}\) is equivalent to the following reduced problem:
\begin{equation}\label{eq-SDP1-square-reduced}
\zeta^{(2)}=\min\left\{\<Q^0,Y^{11}\> \;\middle|\; 
\begin{array}{l}\langle H^0,Y^{11}\rangle=1,\; 
\M_{h_j}(Y^{11})-Y^{22}_j = 0,\; \forall\; j\in[p],
\\[3pt]
Y^{11} \in\bS^{ \cA}_+ \cap \L^{\cA},\;
Y^{22}_j \in \bS^{\cA_j}_+\cap \L^{\cA_j}, \;\forall\; j\in[p]
\end{array}
\right\},
\end{equation}
where $\cA_j$ is the index set of exponents of $s_j[w]_{\tau - \degg{h_j}}$.
Note that \eqref{eq-SDP1-square-reduced} is exactly 
the moment-SOS problem in \eqref{eq-SDPI} by introducing the auxiliary matrices
$Y^{22}_j$, $j\in[p]$.
\end{theorem}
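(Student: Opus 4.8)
The plan is to establish part (2) first—identifying the reduced problem \eqref{eq-SDP1-square-reduced} with the lifted relaxation \eqref{eq-SDP1-square}—and then to read off part (1) from the fact that \eqref{eq-SDP1-square-reduced} is nothing but \eqref{eq-SDPI} with the localizing matrix named as an explicit auxiliary variable. Throughout I would work in dehomogenized coordinates, so that every index of $\bar\cA$ is a monomial in $(w,s)$: the first block carries monomials $w^\alpha$ (of $s$-degree $0$), while the $j$-th sub-block of the second block carries monomials $s_j w^\beta$ (of $s_j$-degree $1$). Forming the rank-one matrix $v^{\bar\cA}(w)(v^{\bar\cA}(w))^\top$, the diagonal blocks $Y^{11}$ and $Y^{22}_j$ then consist of entries of $s$-degree $0$ and $s_j$-degree $2$ respectively, whereas the off-diagonal blocks $Y^{12}$ and $Y^{22}_{jk}$ (with $j\neq k$) consist of entries of $s_j$-degree $1$.

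The key structural observation is that the consistency cone $\L^{\bar\cA}$ decouples the diagonal blocks from the off-diagonal ones. Indeed, two entries of $Y$ are tied together by $\L^{\bar\cA}$ only when their $(w,s)$-exponents coincide; since the $s$-degree signature distinguishes each of the blocks above, the constraints of $\L^{\bar\cA}$ act \emph{within} each block and never relate an off-diagonal entry to a diagonal one. In particular, the restriction of $\L^{\bar\cA}$ to $Y^{11}$ is exactly $\L^\cA$, and its restriction to $Y^{22}_j$ is exactly $\L^{\cA_j}$. Next I would show that the linear map $\H_j$, which encodes the moment form of $(h_j(w)-s_j^2)\,[w]_{2(\tau-\degg{h_j})}=0$, together with consistency, is equivalent to the matrix identity $\M_{h_j}(Y^{11})=Y^{22}_j$: reading the scalar equation attached to each $w$-monomial $w^\mu$ of degree at most $2(\tau-\degg{h_j})$, the term $h_j(w)w^\mu$ has $s$-degree $0$ and is the linear combination of $Y^{11}$-entries equal to the corresponding entry of $\M_{h_j}(Y^{11})$, while the term $s_j^2 w^\mu$ is the corresponding entry of $Y^{22}_j$, and consistency propagates these scalar equalities to the full matrix equality. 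Since neither $\M_{h_j}(Y^{11})$ nor $Y^{22}_j$ involves the off-diagonal blocks, $\H_j(Y)=0$ depends only on the diagonal data.

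With these two facts in hand, part (2) follows from two feasibility-preserving maps with matching objective $\langle Q^0,Y^{11}\rangle$. For the forward direction I would restrict a feasible $Y$ of \eqref{eq-SDP1-square} to its diagonal blocks: principal submatrices of a positive semidefinite matrix are positive semidefinite, giving $Y^{11}\succeq 0$ and $Y^{22}_j\succeq 0$; the consistency restrictions give $Y^{11}\in\L^\cA$ and $Y^{22}_j\in\L^{\cA_j}$; and the previous paragraph gives $\M_{h_j}(Y^{11})=Y^{22}_j$, so the data is feasible for \eqref{eq-SDP1-square-reduced}. For the reverse direction, given feasible $(Y^{11},\{Y^{22}_j\})$ I would set all off-diagonal blocks to zero and form $Y=\Diag(Y^{11},Y^{22}_1,\dots,Y^{22}_p)$. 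This $Y$ is positive semidefinite because its diagonal blocks are; it lies in $\L^{\bar\cA}$ because the decoupling shows that zeroing the off-diagonal entries is consistent (none of them is forced by $\L^{\bar\cA}$ to equal a nonzero diagonal entry); and it satisfies $\langle H^0,Y^{11}\rangle=1$ and $\H_j(Y)=0$ since those constraints read only the diagonal data. I expect this reverse direction—verifying that the trivial block-diagonal completion is simultaneously positive semidefinite \emph{and} consistent—to be the crux; once the $s$-degree decoupling of $\L^{\bar\cA}$ is established, however, the completion is immediate and no nontrivial matrix completion is required.

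Finally, part (1) is a short consequence of part (2). In \eqref{eq-SDP1-square-reduced} the equality $\M_{h_j}(Y^{11})-Y^{22}_j=0$ forces $Y^{22}_j=\M_{h_j}(Y^{11})$; moreover $\M_{h_j}(Y^{11})$ automatically lies in $\L^{\cA_j}$, since $Y^{11}\in\L^\cA$ makes each entry $\sum_\gamma (h_j)_\gamma\, y_{\gamma+\beta+\beta'}$ depend only on $\beta+\beta'$. Hence the auxiliary variable $Y^{22}_j$ can be eliminated, and its sole remaining constraint $Y^{22}_j\succeq 0$ becomes $\M_{h_j}(Y^{11})\succeq 0$. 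The resulting problem in $Y^{11}$ is verbatim \eqref{eq-SDPI} under the identification $X=Y^{11}$, so $\zeta^{(2)}=\zeta^{(1)}$, which proves part (1).
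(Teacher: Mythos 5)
Your proposal is correct, and the two feasibility-preserving maps at its core — restricting a feasible $Y$ to its diagonal blocks in one direction, and extending $(Y^{11},\{Y^{22}_j\})$ by the trivial block-diagonal completion in the other — are exactly the maps the paper uses. The differences are organizational and one point of rigor. You prove part (2) first and then obtain part (1) by eliminating the auxiliary variables $Y^{22}_j$ (noting that $\M_{h_j}(Y^{11})\in\L^{\cA_j}$ is automatic once $Y^{11}\in\L^{\cA}$), whereas the paper proves the two inequalities $\zeta^{(1)}\le\zeta^{(2)}$ and $\zeta^{(1)}\ge\zeta^{(2)}$ for part (1) directly and then argues part (2) by observing that $Y^{12}$ and the off-diagonal blocks of $Y^{22}$ "do not appear in the data matrices" and may be zeroed out. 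Your $s$-degree decoupling argument for $\L^{\bar\cA}$ — that consistency constraints only tie entries whose full $(w,s)$-exponents agree, and the $s$-degree signature separates $Y^{11}$, $Y^{12}$, $Y^{22}_{jj}$, and $Y^{22}_{jk}$ into disjoint classes — is a genuine improvement in precision: it is what actually licenses setting the off-diagonal blocks to zero without violating membership in $\L^{\bar\cA}$, a point the paper asserts implicitly rather than verifies. Likewise, your entrywise derivation of the equivalence $\H_j(Y)=0\Leftrightarrow \M_{h_j}(Y^{11})=Y^{22}_j$ spells out what the paper dismisses with "we can readily show." No gaps; if anything, your write-up is the more complete of the two.
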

\begin{proof}
(1) First we prove that $\zeta^{(1)}\leq \zeta^{(2)}$. Suppose Y is feasible for \(\eqref{eq-SDP1-square}\). Then $\H_j (Y)=0$ implies 
\[
h_j(w)[w]_{\tau-\degg{h_j}}[w]_{\tau-\degg{h_j}}^\top=
s_j^2[w]_{\tau-\degg{h_j}}[w]_{\tau-\degg{h_j}}^\top
\quad \Longrightarrow \quad
\M_{h_j}(Y^{11}) = 
Y^{22}_{j}\succeq 0 ,
\]
where $Y^{22}_j\in\bS^{\tau-\degg{h_j}}_+$ denotes the $(j,j)$-th diagonal block of $Y^{22}$. Thus, $X=Y^{11}$ is feasible for \(\eqref{eq-SDPI}\) with the same objective value, yielding $\zeta^{(1)}\le\zeta^{(2)}$.\\
Next we prove that $\zeta^{(1)}\geq \zeta^{(2)}$. Conversely, let $X$ be feasible for \(\eqref{eq-SDPI}\).  Define
\begin{equation*}
Y:=\begin{pmatrix}
X&0_{\barn_\tau\times {\bar p}}\\0_{{\bar p}\times \barn_\tau }&\M_h(X)
\end{pmatrix},\quad 
\M_h(X):=\operatorname{blkdiag}(\M_{h_1}(X), \dots, \M_{h_p}(X)).
\end{equation*}
Then $Y$ is a feasible for \eqref{eq-SDP1-square} with the same objective function, so $\zeta^{(1)}\geq \zeta^{(2)}$.
\\[5pt]
(2) 
One may observe that the variable $Y^{12}$ in \eqref{eq-SDP1-square}, which correspond
to the monomials in $s_j [w]_\tau [w]_{\tau -\degg{h_j}}^\top$ for $j\in [p]$, is not
involved in the data matrices in \eqref{eq-SDP1-square}. Thus, we may set
$Y^{12}$ and $Y^{21}=(Y^{12})^\top$ to be zero without affecting the optimal value of  \eqref{eq-SDP1-square}. Similarly, for the $(j,k)$-block of $Y^{22}$,
which corresponds to the monomials in 
$s_js_k[w]_{\tau -\degg{h_j}}[w]_{\tau -\degg{h_k}}^\top$, is not involved in the data matrices in \eqref{eq-SDP1-square}
for $1\leq j<k\leq p$, and we can set these blocks to zero without 
affecting the optimal value. Thus we may restrict $Y^{22}$ to be a block 
diagonal matrix with the $(j,j)$-th diagonal block $Y^{22}_j$ corresponding to the monomials
$(s_j[w]_{\tau -\degg{h_j}})(s_j[w]_{\tau -\degg{h_j}})^\top$ for $j\in [p].$
To summarize, instead of \eqref{eq-SDP1-square}, we can equivalently consider
its reduced version: 
\begin{equation*} 
\zeta^{(2)}=\min\left\{\<Q^0,Y^{11}\> \;\middle|\; 
\begin{array}{l}\langle H^0,Y^{11}\rangle=1,\; \widehat{\H}_j(Y^{11},Y^{22}_j)= 0,\, j\in[p],
\\[3pt]
Y^{11} \in\bS^{ \cA}_+ \cap \L^{\cA},\;
Y^{22}_j \in \bS^{\cA_j}_+\cap \L^{\cA_j}, \; j\in[p]
\end{array}
\right\},
\end{equation*}
where  $\widehat{\H}_j(Y^{11},Y^{22}_j)=0$ encodes the equality constraint
$(h_j(w)-s_j^2)[w]_{2(\tau - \degg{h_j})}=0$ in terms of the new variables 
$Y^{11}$ and $Y^{22}_j$. We can readily show that 
$\widehat{\H}_j(Y^{11},Y^{22}_j)=0$ is equivalent to the following condition:
$$ 
\M_{h_j}(Y^{11}) - Y^{22}_j = 0\quad \forall\; j\in [p].
$$
Thus the problem \eqref{eq-SDP1-square-reduced} is equivalent to
\eqref{eq-SDP1-square}.
\end{proof}

\subsection{Facial structure}\label{subsec-FR}
As mentioned in the introduction, in many polynomial optimization relaxations of the form \eqref{eq-SDP}, one encounters equality constraints of the form
\begin{equation}\label{eq-FR}
    AX = 0,\;X\succeq 0
\quad\Longleftrightarrow\quad
\langle A^\top A, X \rangle = 0,\;X\succeq 0
\quad\Longleftrightarrow\quad
X \in (A^\top A)^\perp \cap \bS^\cA_+.
\end{equation}
That is, the feasible set is restricted to a face of the semidefinite cone $\bS^\cA_+$ exposed by the PSD matrix $A^\top A\in\bS^{\cA}_+$.
(Similarly, if \eqref{eq-SDP} has constraints $\inprod{Q_i}{X}=0$ 
with $Q_i\succeq 0$
for some $i\in[l]$, we can also replace them
by $B_iX = 0$ for some full row-rank matrix $B_i$ satisfying 
$B_i^\top B_i = Q_i$.)
Such “facial constraints” arise naturally in both reformulation-linearization techniques (RLT) \cite{sherali2007rlt,qiu2024polyhedral,RiNNAL+} and moment-SOS hierarchies \cite{lasserre2001global,Lasserre2001}. Although facial constraints strengthen the relaxation, they also destroy Slater’s condition, which can make the dual unattainable, and often induce numerical ill--conditioning in SDP solvers \cite{drusvyatskiy2017many,RiNNAL}.

In this subsection, we first analyze the geometric structure of the facial constraints, and then illustrate how these facial constraints naturally arise in (i) RLT relaxations, (ii) moment–SOS relaxations, and (iii) polynomial‐optimization problems with binary variables. In Section~\ref{sec-alg}, we explain how our algorithm exploits this facial structure to avoid degeneracy.

\medskip

\noindent\tb{Geometric structure.} 
Let \(A\in\mathbb{R}^{\,m\times|\cA|}\) be a full-row-rank matrix.  Choose any full‐row–rank matrix \(U\in\mathbb{R}^{(|\cA|-m)\times |\cA|}\) whose rows form a basis of $\ker(A)$, so that $A U^\top=0$.  Then for every $X\succeq0$,
\begin{equation}\label{eq-AX-reduce}
    AX = 0
\quad\Longleftrightarrow\quad
\langle A^\top A, X \rangle = 0
\quad\Longleftrightarrow\quad
\exists\, \Theta \in \mathbb{S}^{|\cA|-m} :\; X = U^\top \Theta U.
\end{equation}
Hence, the feasible region \(\S^\cA_+\cap \mathcal{F} := \{ X \in\S^\cA \mid AX = 0 \} \) is a face of the PSD cone \( \mathbb{S}^{\cA}_+ \), consisting of all PSD matrices whose range is contained in \( \ker(A) \).
Let \( X \in \mathcal{F}\cap\S^\cA_+ \) be any feasible point. By~\eqref{eq-AX-reduce}, there exists \( \Theta \in \mathbb{S}^{|\cA| - m}_+ \) such that \( X = U^\top \Theta U \). The tangent cone to \( \mathcal{F}\cap\S^n_+ \) at \( X \) is then given by
\[
T_{\mathcal{F}\cap\S^\cA_+}(X) = \Big\{ H \in \mathbb{S}^{\cA} \,\mid\, H = U^\top \widehat{H} U,\; \widehat{H} \in T_{\mathbb{S}_+^{|\cA| - m}}(\Theta) \Big\}.
\]
This follows from the fact that \( \mathcal{F}\cap\S^\cA_+ \) is the linear image of the lower-dimensional PSD cone \( \mathbb{S}_+^{|\cA| - m} \) under the injective linear map \( \widehat{H} \mapsto U^\top \widehat{H} U \). Hence, the tangent directions at \( X \) correspond exactly to the lifted tangent directions at \( \Theta \).
The normal cone to \( \mathcal{F}\cap\S^\cA_+ \) at \( X \) is then given by
\begin{equation}\label{eq-normal-cone}
\begin{aligned}
    N_{\mathcal{F}\cap\S^\cA_+}(X) 
    &= \Big\{ S \in \mathbb{S}^{\cA} \,\mid\, U S U^\top \in N_{\mathbb{S}_+^{|\cA| - m}}(\Theta) \Big\}\\
    &= \Big\{ S \in \mathbb{S}^{\cA} \,\mid\, -U S U^\top \succeq 0,\; (U S U^\top) \Theta = 0 \Big\}\\
    &= \Big\{ S \in \mathbb{S}^{\cA} \,\mid\, -U S U^\top \succeq 0,\; USX = 0 \Big\}.
\end{aligned}
\end{equation}


\noindent\tb{Reformulation linearization technique (RLT).} 
The convex conic reformulation \eqref{eq-COP-convex} is equivalent to the original polynomial optimization problem \eqref{eq-POP} under suitable conditions. However, its polyhedral-SDP relaxation \eqref{eq-SDP} may remain weak when applied directly to \eqref{eq-POP}. To tighten this relaxation, we apply RLT, which multiplies each polynomial constraint by selected monomials to produce new equality constraints.  Although these additional equalities are redundant in the original variable space, they become nonredundant in the lifted SDP, further restricting the feasible set and yielding a tighter relaxation.

Although RLT can be applied to both equality and inequality constraints, we focus here on its use for equalities. The treatment of inequalities via slack or squared-slack reformulations follows from Subsection~\ref{subsec-ineq}.
Assume that \eqref{eq-POP} includes equalities $f_i(w)=0$ whose homogenized forms $\bar f_i(x)$ are linear in $u^\cA(x)$, i.e.,
\[
\bar f_i(x)= \langle a_i, u^\cA (x)\rangle \text{ for some } a_i\in\R^\cA.
\]
Collecting these coefficient vectors into $A:=[a_1,\dots,a_m]^\top\in\R^{m\times \cA}$. We see that $\bar f_i(x)=0$ is equivalent to \(A\,u^\cA(x)=0\).   Multiplying each equation by the vector $u^\cA(x)^\top$ yields the RLT-type constraints $AX=0$, which can be added to \eqref{eq-SDP} to tighten the relaxation. The derivation is summarized in Figure~\ref{fig:RLT-constraints}.
\begin{figure}[ht!]
\begin{tcolorbox}[colframe=black, colback=white, boxrule=1pt, arc=2mm, top=0pt]
\vspace{1em}
\[
\begin{array}{rcrcl}
\begin{bmatrix}
    \bar f_1(x) \\
    \vdots \\
    \bar f_m(x)
\end{bmatrix} = 0
&
\qLRq
&
\begin{bmatrix}
    \bar f_1(x) \\
    \vdots \\
    \bar f_m(x)
\end{bmatrix}\left(u^\cA(x)\right)^\top = 0
&&
\\[2em]
\multicolumn{1}{c}{\rotatebox{90}{$\Longleftrightarrow$} }
&&
\multicolumn{1}{c}{\rotatebox{90}{$\Longleftrightarrow$} }
&&
\\[.5em]
A \; u^\cA(x) = 0
&
\qLRq
&
A\,u^\cA(x)\,\left(u^\cA(x)\right)^\top = 0
&\qRq &
AX=0.
\end{array}
\]
\end{tcolorbox}
\caption{Derivation of RLT constraints with facial structure.}
\label{fig:RLT-constraints}
\end{figure}

\medskip
\noindent\tb{Moment-SOS relaxations.}
When applying the moment–SOS hierarchy to the general POP \eqref{eq-eq-ieq-general}, each equality $g_i(w)=0$ with $\deg(g_i)\le\tau$ gives rise to redundant polynomial constraints of the form
\begin{equation*}
    g_i(w)[w]_{2\tau-\deg (g_i)}=0,
\end{equation*}
which can be rewritten as
$$
\left(g_i(w)[w]_{\tau-\deg (g_i)}\right)[w]_\tau^\top=0
    \, \Longleftrightarrow \,
    G_i[w]_\tau[w]_\tau^\top=0
    $$
where \(G_i\in\mathbb{R}^{d_i\times\barn_\tau}\) is the coefficient matrix of the polynomials $g_i(w)[w]_{\tau-\deg (g_i)}$ with respect to the basis 
$[w]_\tau$, and
$$
d_i \;=\; \binom{\,n + \tau - \deg(g_i)\,}{\,\tau - \deg(g_i)\,}.
$$
Lifting $[w]_\tau[w]_\tau^\top$ to the matrix variable $X\in\bS^{\barn_\tau}_+$ yields the linear constraint
$$
G_iX =0,\quad i=1,\dots,\ell.
$$
Collecting the above constraint matrices into
$$
A =\begin{bmatrix}G_1\\ \vdots\\ G_\ell\end{bmatrix}
\;\implies\; AX = 0,
$$
we see that all equality constraints with $\deg(g_i)\le\tau$ in the moment–SOS relaxation can be written concisely as $AX=0$.

We illustrate the procedure in Figure~\ref{fig:RLT-constraints} by an example below.

\begin{example}
    Consider the nonnegative quadratic programming problem:
    \[
    \min\Big\{w^\top Q w + 2c^\top w\mid a_i^\top w=b_i,\; i\in[m],\; w\in\R^n_+\Big\}.
    \]
    Set $\tau=1$ and choose the full degree‐$1$ monomial basis \(\cA_1\).  Then
\[
u^\cA(x) \;=\;\bigl[x_0,x_1,\dots,x_n\bigr]^\top
\;=\;\bigl[\,1,\,w_1,\dots,w_n\,\bigr]^\top\in\mathbb{R}^{\,n+1}.
\]
By RLT, each linear constraint $a_i^\top w = b_i$ implies
\[
\begin{bmatrix}-\,b_i & a_i^\top\end{bmatrix}\,u^\cA(x)
\;=\; 0
\;\;\Longrightarrow\;\;
\begin{bmatrix}-\,b_i & a_i^\top\end{bmatrix}\,X \;=\; 0.
\]
By collecting all $m$ such rows, we define
$$
A \;=\;
\begin{bmatrix}
-\,b_1 & a_1^\top \\[4pt]
\vdots \\[4pt]
-\,b_m & a_m^\top
\end{bmatrix}
\;\in\;\mathbb{R}^{\,m\times(n+1)},
$$
so that $AX=0$ enforces all linear equalities in the lifted SDP.  The resulting polyhedral‐SDP (or moment–SOS) relaxation is
    $$
    \min 
    \Bigl\{\langle Q^0, X\rangle \;\Big|\;
    \langle H^0, X\rangle = 1,\;
    A X=0,\;
    X\in \mathbb{S}_+^{n+1} \cap \bN^{n+1}
    \Bigr\},
    $$
    where
    \[
    Q^0=\begin{bmatrix}
        0&c^\top\\c & Q
    \end{bmatrix},\quad 
    H^0=\begin{bmatrix}
        1 & 0_{n}^\top\\
        0_n & 0_{n\times n}
    \end{bmatrix}.
    \]
\end{example}

\medskip

A common approach to eliminate the affine constraint $AX=0$ in the SDP problem is facial reduction \cite{borwein1981facial}, which replaces $X$ with a lower-dimensional variable $\Theta$ as defined in \eqref{eq-AX-reduce}. This method has been applied to specific cases \cite[Section 3.2]{Lasserre2001}, where the constraints in \eqref{eq-POP} are binary, such as $x_i^2=x_i$ or $x_i^2=1$, and facial reduction reduces to removing redundant rows and columns. However, for general problems \eqref{eq-POP}, this process typically destroys the sparsity of the remaining linear constraints. In the next section, we present a more efficient approach that preserves sparsity while handling affine constraints.

\section{Algorithm}\label{sec-alg}

In this section, we introduce the proposed method RiNNAL-POP for solving the following general polyhedral-SDP problem:
\begin{equation}\label{eq-SDP-general}
\min\Big\{\<C,X\> \mid \Q(X)=b,\; X\in\F\cap \P \cap\bS^{n}_+\Big\},\tag{SDP}
\end{equation}
where $\Q: \bS^n \to \mathbb{R}^d$ is a given linear mapping with $b \in \mathbb{R}^d$, and $\P$ is a polyhedral convex set in $\bS^n$.
We assume that $\F$ has the compact form:
\[
  \mathcal F = \big\{X\in\bS^n \mid AX=0\big\},
\]
where $A \in \mathbb{R}^{m \times n}$ has full row-rank. \eqref{eq-SDP-general} is a generalization of the polyhedral-SDP relaxation~\eqref{eq-SDP}. In particular, one can take $\P$ as the intersection $\P^\cA\cap \L^\cA\cap \{X\in\S^\cA\mid\langle H^0,X\rangle =1\}$, i.e., the nonnegativity cone, the consistency cone, and the normalization hyperplane. The basic framework 
of RiNNAL-POP follows those in \cite{RiNNAL,RiNNAL+} that were originally developed to solve SDP-RLT and DNN relaxations of mixed-binary quadratic programs. 
Here we extend the algorithms in  \cite{RiNNAL,RiNNAL+} to address the problem~\eqref{eq-SDP-general}.
To apply the augmented Lagrangian framework, we rewrite \eqref{eq-SDP-general} in an equivalent splitting form
\begin{equation}\label{prob-ALM-reform}
    \min\left\{\<C,X\> + \delta_{\F\cap \S^n_+} (X)+\delta_{\P}(Y)  \mid\ X-Y=0,\; \Q (X)=b \right\},
\end{equation}
where $\delta_C$ denotes the indicator function of a convex set $C$. Once we have computed $(X^*,Y^*)$ (and the associated multipliers), we certify its global optimality by checking the following Karush–Kuhn–Tucker (KKT) conditions of \eqref{prob-ALM-reform}:
\begin{equation}\label{KKT}
    \begin{aligned}
        &X-Y=0,\;
        AX=0,\; 
        \Q(X)=b,\;  
        C-W-A^\top U-U^\top A-\Q^*(y) =S,\\
        &
        \langle X,{S}\rangle=0,\; 
        X \succeq 0,\; 
        S\succeq 0,\;
        W\in N_{\P}(Y),\; 
        Y\in\P,
    \end{aligned}
\end{equation}
where $U \in \mathbb{R}^{m \times n}$, $W\in\mathbb{R}^{n\times n}$, $y \in \mathbb{R}^d$, and $S \in \mathbb{S}^n$ are the dual variables. Here $N_\P (Y)$ denotes the normal cone of the set $\P$ at the point $Y$. We make the following assumption throughout the paper.
\begin{assumption}\label{assumption-mild}
    Problem \eqref{eq-SDP-general} admits an optimal solution satisfying the KKT conditions \eqref{KKT}, and its objective function is bounded from below.
\end{assumption}

\begin{remark}
  Although \eqref{eq-FR} shows that enforcing three different constraints yields the same primal feasible set, their dual formulations differ. In particular, when $A\neq0$, expressing the constraint as $\langle Q^i,X\rangle=0$ or $\langle A A^\top,X\rangle=0$ exacerbates the failure of Slater’s condition and can enlarge the duality gap.  In contrast, the linear form $AX=0$ produces the smallest duality gap among these alternatives—and under mild regularity conditions attains zero duality gap (see \cite{bomze2017fresh,RiNNAL,RiNNAL+}).  In Section~\ref{subsec-recover-dual}, we explain how to recover the corresponding dual multipliers and how to handle the enlarged constraint system efficiently in our algorithm.
\end{remark}

\begin{remark}
    In \eqref{eq-SDP-general}, we enforce the constraint $\langle H^0, X \rangle = 1$ in the set $\mathcal{P}$ rather than the feasible set $\mathcal{F}$ for simplicity of presentation. However, for polyhedral-SDP relaxations of polynomial optimization problems involving mixed-binary constraints (i.e., $x_i \in \{0,1\}$), it is preferable to include this constraint in $\mathcal{F}$. This treatment is adopted in \cite{RiNNAL, RiNNAL+} and typically leads to a performance improvement of at least 50\%.
\end{remark}

\subsection{Augmented Lagrangian method}
RiNNAL-POP is an augmented Lagrangian method for solving \eqref{eq-SDP-general}.
The augmented Lagrange function of its equivalent split form \eqref{prob-ALM-reform} is defined by
\begin{align*}
   & L_\sigma(X,Y;y,W) \;:=\; \<C,X\>  - \<y,\Q (X)-b\> -\langle W,X-Y\rangle + \frac{\sigma}{2}\left\|\Q (X)-b\right\|^2 + \frac{\sigma}{2}\|X-Y\|^2\\
    &=\<C,X\>  + \frac{\sigma}{2}\left\|\Q (X)-b-\sigma^{-1}y\right\|^2+\frac{\sigma}{2}\|X-Y-\sigma^{-1}W\|^2-\frac{1}{2\sigma}\|y\|^2-\frac{1}{2\sigma}\|W\|^2.
\end{align*}
Given the initial penality parameter $\sigma_0>0$, dual variable $y^0\in \R^{d}$ and $W^0\in\R^{n\times n}$, the augmented Lagrangian method performs the following steps at the $(k+1)$-th iteration:
\begin{align}
    (X^{k+1},Y^{k+1})&=\arg\min\ \Big\{L_{\sigma_k}(X,Y;y^k,W^k) :\ X\in {\F\cap \S^n_+},\; Y\in\P \Big\},\label{ALM-sub-XY}\\
    y^{k+1}&=y^k-\sigma_k(\Q (X^{k+1})-b),\notag\\
    W^{k+1}&=W^k-\sigma_k(X^{k+1}-Y^{k+1}),\notag
\end{align}
where $\sigma_k \uparrow \sigma_{\infty} \leq+\infty$ are positive penalty parameters. For a comprehensive discussion on the augmented Lagrangian method applied to convex optimization problems and beyond, see \cite{hestenes1969multiplier, powell1969method, rockafellar1976augmented}. 
Let $\widetilde y$ and $\widetilde W\in \bS^{n}$ be fixed. The inner problem \eqref{ALM-sub-XY} can be expressed as:
\begin{equation}
    \min\ \Big\{L_{\sigma}(X,Y;\widetilde y,\widetilde W) :\ X\in {\F\cap \S^n_+},\; Y\in\P \Big\}.\label{ALM-sub-XY-fix}
\end{equation}
In \eqref{ALM-sub-XY-fix}, we can first minimize with respective to $Y\in\P$ to get the following convex optimization problem related only to $X$:
\begin{equation}\label{ALM-sub-Y}
    \min\left\{
    \begin{aligned}
        \phi (X) &:= \langle C,X\rangle 
        + \frac{\sigma}{2} \|\sigma^{-1}y-(\Q(X)-b)\|^2 \\
        &+ \frac{\sigma}{2}\|X-\sigma^{-1}W-\Pi_{\P}(X-\sigma^{-1}W)\|^2
    \end{aligned}
    \;\middle|\; X\in {\F\cap \S^n_+} \right\},\tag{CVX}
\end{equation}
Once we obtain the optimal solution $\wt X$ of \eqref{ALM-sub-Y}, we can recover the optimal solution $\wt Y=\Pi_P(\wt X-\sigma^{-1}\wt W)$.
The ALM framework for solving \eqref{prob-ALM-reform} is summarized in Algorithm \ref{alg1}, where $X^{k+1}$ is obtained by the factorization described in the next subsection. 
\begin{algorithm}
\linespread{1.1}\selectfont
\caption{The RiNNAL-POP method}
\label{alg1}
\begin{algorithmic}[1]
\STATE {\bf Parameters:} Given $\sigma_0>0$, integer $r_0>0$, initial point $R^0\in\R^{n\times r_0}$.
\STATE $k\gets 0$, $i\gets 0$, $y^0=0_{d}$, $W^0=0_{n\times n}$.
\WHILE{\eqref{eq-SDP-general} is not solved to required accuracy}
    \WHILE{\eqref{ALM-sub-Y} is not solved to required accuracy}
        \STATE Obtain $R^{i+1}$ by solving \eqref{ALM-sub-LR-M} satisfying \eqref{LR-stop}.
        \STATE Obtain $\widehat X^{i+1}=R^{i+1}(R^{i+1})^\top$.
        \STATE Obtain $X^{i+1}$ by solving \eqref{ALM-sub-Y} via one step of PG satisfying \eqref{CVX-stop}.
        \STATE Obtain $R^{i+1}$ from the eigenvalue decomposition  of $X^{i+1}$.
    \ENDWHILE
    \STATE $X^{k+1}=X^{i}.$
    \STATE $Y^{k+1}=\Pi_P(X^{k+1}-\sigma^{-1}W^k)$.
    \STATE $y^{k+1}=y^k-\sigma_k(\Q (X^{k+1})-b)$.
    \STATE $W^{k+1}=W^k-\sigma_k(X^{k+1}-Y^{k+1})$.
    \STATE {Update $\sigma_k$}.
    \STATE $k\gets k+1$, \;$i\gets 0$.
\ENDWHILE
\end{algorithmic}
\end{algorithm}

\subsection{Low-Rank Phase}\label{subsec-LR-phase}

In this phase, we follow the approach introduced in \cite{RiNNAL,RiNNAL+} to solve \eqref{ALM-sub-Y}.  Let $\sigma>0$, $y\in\R^d$, and $W\in\R^{n\times n}$ be fixed, and suppose that the subproblem \eqref{ALM-sub-Y} admits an optimal solution of rank $r>0$.  By the Burer–Monteiro (BM) factorization, any rank-$r$ solution can be written as  
\[
X\;\in\;\mathcal{F}\,\cap\,\S_+^n\qRq X \;=\; R R^\top
 \text{ for some } R\in\R^{n\times r}.
\]
The crucial property
\begin{equation} \label{eq-reduction}
AX = 0 
\;\Longleftrightarrow\; 
A (R R^\top) = 0 
\;\Longleftrightarrow\; 
A R = 0
\end{equation}
shows that enforcing $X\in\F$ (i.e.\ $AX=0$) is equivalent to the {affine} constraint $AR=0$ on $R$.  Consequently, the original SDP subproblem \eqref{ALM-sub-Y} is equivalent to the following factorized (nonconvex) problem:
\begin{equation}\label{ALM-sub-LR-M}
\min\Bigl\{\,f_r(R) := \phi(RR^\top)\mid A R = 0 \Bigr\},
\tag{LR}
\end{equation}
where the constraint $AR=0$ defines an affine subspace of dimension $(n-m)r$.  In this form, 
the decision‐variable's dimension is reduced from $n\times n$ to $n\times r$, and the number of linear constraints is effectively reduced from the original $mn$ equations on the $n\times n$ matrix variable $X$ to $mr$ equations on $R$.
Thus, one can directly apply any suitable first‐order method (for example, projected gradient descent) to \eqref{ALM-sub-LR-M} without explicitly handling the full $n\times n$ PSD cone. In practice, this reduction in both the dimension and number of constraints often yields substantial savings in memory and per‐iteration cost. 
The reduction enabled by the crucial property \eqref{eq-reduction} 
is key to the success of our final algorithm RiNNAL-POP.

We impose only the following mild, implementable non-increasing objective condition on the low-rank phase:
\begin{equation}\label{LR-stop}
    f_r(R^{i+1})\leq f_r(R^i)
    \quad\Leftrightarrow\quad
    \phi_r(R^{i+1}(R^{i+1})^\top)\leq 
    \phi_r(R^{i}(R^{i})^\top).
\end{equation}

The subproblem \eqref{ALM-sub-LR-M} is nonconvex, so the algorithm must be capable of escaping from saddle points. Moreover, the factorized formulation is equivalent to the original SDP only when the chosen rank exceeds the true optimal rank. To enforce these requirements, we introduce a convex lifting phase that automatically increases $r$ as needed and guarantees a monotonic decrease in the objective value to escape saddle points, thereby ensuring convergence to a global solution.

\subsection{Convex lifting phase}\label{subsec-CVX-phase}
In the convex lifting phase, we adopt the strategy proposed in \cite{lee2022escaping} to ensure global convergence of RiNNAL+ while automatically adjusting the rank.  Let $\sigma>0$, $y\in \R^{d}$, and $W\in\R^{n\times n}$ be fixed.  Given any feasible starting point $\widehat{X}\in \F\cap\S^n_+$, we conduct a single projected‐gradient step on the SDP‐subproblem \eqref{ALM-sub-Y}:
\begin{align}\label{eq-CVX-PG}
    X=\Pi_{\F\cap \mathbb{S}_{+}^{n}}\left(\widehat{X}-t\nabla \phi(\widehat{X})\right)
    =\arg\min\Big\{ \|X-G\|^2  \mid\ X \in \F\cap \mathbb{S}_{+}^{n} \Big\},
\end{align}
where $G:=\widehat{X}-t\nabla \phi(\widehat{X})$ and $t>0$ is an appropriate stepsize. One can compute $X$ efficiently in closed form (cf.\ \cite[Lemma 2]{RiNNAL+}):
\begin{equation*}
    X=\Pi_{\bS^n_+} (JGJ),\quad J:=I-A^\top (AA^\top)^{-1}A.
\end{equation*}

For the stepsize choices, the only requirement of our framework is that the final stepsize of each ALM subproblem is uniformly bounded and satisfies the following criteria for some given $\delta\in (0,1)$:
\begin{equation}\label{CVX-stop}
    \phi (X^{i+1})\leq \phi (X^i)+\langle \nabla\phi(\widehat X^{i}),X^{i+1}-\widehat{X}^i \rangle+\frac{\delta}{t_i}\|X^{i+1}-\widehat{X}^i\|^2.
\end{equation}
To ensure that $t_i$ is bounded, we need to specify an upper $t_{\text{max}}$ and a lower bound
$t_{\text{min}}$. It is known that if $\nabla \phi$ is $L$-Lipschitz continuous, then \eqref{CVX-stop} holds for any $t_i\leq 2\delta/L$,
and thus we assign $t_{\text{min}}\leq 2\delta/L$ to ensure that our algorithm is well-defined. Since the convex lifting phase is not the major tool for reducing the objective value, we simply assign a fixed step size $t_i=t$ in our implementation.

This phase offers several advantages:
\begin{itemize}
  \item {Primal feasibility enforcement:} a single projection restores primal feasibility of \(X\).
  \item {Convergence guarantee:} the projection helps to escape from saddle points in the low-rank phase and yields a monotonic decrease in the objective with a suitably chosen stepsize, thus ensuring global convergence of the ALM subproblem. For a more detailed analysis, see~\cite{lee2022escaping}.
  \item {Automatic rank selection:} the lifting phase automatically identifies a rank \(r\) satisfying \(r \ge r^*\) to ensure equivalence between the low-rank subproblem \eqref{eq-ALMsub-LR} and the SDP-subproblem \eqref{ALM-sub-Y}, while keeping \(r\) as small as possible to minimize memory usage and per-iteration cost. We provide a rigorous theoretical justification for this behavior from the perspective of rank identification in the next subsection.
\end{itemize}
Together, these properties enable a reliable initialization, provide an accurate rank estimate, and enhance computational efficiency, ultimately ensuring the global convergence of RiNNAL-POP.

\subsection{Rank identification}\label{subsec-rank-identification}
In the convex lifting phase, we use a projected‐gradient scheme to solve the augmented Lagrangian subproblem~\eqref{ALM-sub-Y}, which can be equivalently written as
\begin{equation}\label{eq-CVX-short}\tag{CVX}
    \min_X\; F(X)\coloneqq\phi(X)+\Psi(X),
\end{equation}
where the function $\phi$ is lower-bounded, convex, and diﬀerentiable with $L$-Lipschitz continuous gradient, and the function $\Psi$ is convex and has the form
\begin{equation}\label{eq:Psi}
    \Psi(X)\coloneqq \delta_{\F\cap\S^n_+}(X), \quad \mathcal F = \big\{X\in\bS^n \mid AX=0\big\}.
\end{equation}
Empirically, these projected-gradient iterates very rapidly “lock on’’ to the true rank of the optimal solution of~\eqref{eq-CVX-short}, see~\cite{RiNNAL+} for numerical comparisons highlighting the advantage of the PG step over adaptive rank-tuning strategies.
To explain this behavior rigorously, we establish a finite‐step rank identification result: under a standard nondegeneracy condition, the PG iterates lie on the smooth manifold of fixed‐rank matrices after finitely many steps. This provides the theoretical justification for the practical efficiency of our convex lifting phase in identifying the true rank of the optimal solution of \eqref{eq-CVX-short}. 

Before proceeding to the convergence analysis, we introduce the definition of convex partly smooth functions.  This concept is based on the notion of a \(C^2\)–manifold, that is, a manifold defined by equations that are twice continuously differentiable.

\begin{definition}[\tb{Partly smooth \cite{lewis2002active}}]
\label{def:ps}
A convex function $\Psi$ is partly smooth at a point $X^*$ relative
to a set $\M$ containing $X^*$ if $\partial \Psi(X^*) \neq \emptyset$
and:
\begin{enumerate}
		\itemsep 0pt
		\topsep 0pt
	\item Around $X^*$, $\M$ is a $\mathcal{C}^2$-manifold and
		$\Psi|_{\M}$ is $\mathcal{C}^2$.
	\item The affine span of $\partial \Psi(X)$ is a translate
		of the normal space to $\M$ at $X^*$.
	\item $\partial \Psi$ is continuous at $X^*$ relative to $\M$.
\end{enumerate}
\end{definition}
Informally, this means that $\Psi|_{\M}$ is smooth at $X^*$,
whereas its value varies sharply in directions departing from
$\M$ in a neighborhood of $X^*$.
It is known \cite{lewis2002active} that for every $X^*\in\S^n_+$, the indicator function
$\delta_{ \S^n_+}$ is partly smooth at $X^*$ with
respect to the manifold
	\begin{equation}
		\label{eq:M2}
	\M_{+}(X^*) \coloneqq \left\{ Y \in \S^n_+ \mid \rank(Y) =
	\rank(X^*) \right\}.
\end{equation}
In the sequel, we extend this partial smoothness result to $\F\cap \S^n_+$, a face of the PSD cone. Note that the standard approach of establishing partial smoothness via transversality cannot be applied here, as the required transversality condition fails to hold. Therefore, we check Definition~\ref{def:ps} directly to verify partial smoothness.
\begin{lemma}[\tb{Partial smoothness of $\Psi$}]\label{lem:partly-smooth-face-psd}
    At every $X^*\in\F\cap\S^n_+$, the indicator function $\Psi(X)\coloneqq\delta_{\F\cap\S^n_+}(X)$ is partly smooth with respect to the manifold 
\[
\M^{\F}_+(X^*) \coloneqq\left\{ Y \in \S^n_+ \mid AY=0,\;\rank(Y) =
	\rank (X^*) \right\}.
\] 
\end{lemma}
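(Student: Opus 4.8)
The plan is to verify the three conditions of Definition~\ref{def:ps} directly, transporting everything through the linear bijection of \eqref{eq-AX-reduce}. Set $k:=n-m$, let $U\in\R^{(n-m)\times n}$ have rows spanning $\ker A$, and introduce the linear map $\Phi:\S^k\to\S^n$, $\Phi(\Theta):=U^\top\Theta U$, together with its adjoint $\mathcal U:\S^n\to\S^k$, $\mathcal U(S):=USU^\top$, so that $\langle S,\Phi(\Theta)\rangle=\langle\mathcal U(S),\Theta\rangle$. Because $U$ has full row rank, $\Phi$ is injective with image exactly $\F$, and $\mathcal U$ is surjective; moreover a one-line computation (using that $U^\top$ is injective and $U$ is surjective) gives $\rank(U^\top\Theta U)=\rank(\Theta)$. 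Writing $X^*=\Phi(\Theta^*)$ with $\Theta^*\in\S^k_+$ and $r:=\rank X^*=\rank\Theta^*$ (as furnished by \eqref{eq-AX-reduce}), it follows that $\Phi$ restricts to a bijection from the fixed-rank PSD manifold $\M_r:=\{\Theta\in\S^k_+:\rank\Theta=r\}$ onto $\M^\F_+(X^*)$.

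For Condition~(1), observe that $\Psi\equiv 0$ on $\M^\F_+(X^*)\subseteq\F\cap\S^n_+$, so $\Psi|_{\M^\F_+}$ is $C^2$; and since $\M_r$ is a real-analytic embedded submanifold of $\S^k$ while $\Phi$ is an injective linear (hence smooth-embedding) map onto $\F$, the image $\M^\F_+(X^*)=\Phi(\M_r)$ is an embedded $C^2$ submanifold of $\S^n$. For Condition~(2), I would start from the normal-cone formula \eqref{eq-normal-cone}, which reads $\partial\Psi(X^*)=N_{\F\cap\S^n_+}(X^*)=\mathcal U^{-1}\bigl(N_{\S^k_+}(\Theta^*)\bigr)$. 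For the ordinary PSD cone it is classical (and is the linear-algebra core of the base-case result of \cite{lewis2002active}) that $\operatorname{span}N_{\S^k_+}(\Theta^*)=(T_{\M_r}(\Theta^*))^\perp=N_{\M_r}(\Theta^*)$. Since $\mathcal U$ is surjective, taking linear spans commutes with the preimage, so $\operatorname{span}\partial\Psi(X^*)=\mathcal U^{-1}(N_{\M_r}(\Theta^*))$. On the other hand $T_{\M^\F_+}(X^*)=\Phi(T_{\M_r}(\Theta^*))$, and the adjoint identity yields $S\perp T_{\M^\F_+}(X^*)\iff\mathcal U(S)\in N_{\M_r}(\Theta^*)$, i.e.\ $N_{\M^\F_+}(X^*)=\mathcal U^{-1}(N_{\M_r}(\Theta^*))$. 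The two coincide, which is exactly Condition~(2) (the cone $\partial\Psi(X^*)$ contains $0$, so its affine span is its linear span).

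For Condition~(3) I would restrict to $X\in\M^\F_+(X^*)$, write $\Theta(X):=\Phi^{-1}(X)\in\M_r$ (a linear, hence continuous, function of $X$ on $\F$), and express the normal-cone map as the composition $X\mapsto\Theta(X)\mapsto N_{\S^k_+}(\Theta(X))\mapsto\mathcal U^{-1}(N_{\S^k_+}(\Theta(X)))=\partial\Psi(X)$. The middle set-valued map is continuous relative to $\M_r$ by the base-case partial smoothness of $\delta_{\S^k_+}$, and the outer map $K\mapsto\mathcal U^{-1}(K)=\ker\mathcal U+\{S:\mathcal U(S)\in K\}$ is a preimage under the fixed surjective linear map $\mathcal U$. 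Since the summand $\ker\mathcal U=\F^\perp$ is independent of $X$, it remains to argue that this preimage operation preserves set-valued continuity, after which continuity of $\partial\Psi$ at $X^*$ relative to $\M^\F_+$ follows.

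I expect the main obstacle to be Condition~(3): one must fix the precise notion of set-valued continuity intended in Definition~\ref{def:ps} and verify that taking preimages under $\mathcal U$ respects it (e.g.\ in the Painlev\'{e}--Kuratowski sense on the cones, or equivalently the Pompeiu--Hausdorff sense after intersecting with a ball). This is precisely where the warning that transversality fails becomes relevant: we cannot invoke the transversal-intersection calculus for sums of partly smooth functions and must instead track the normal cones explicitly through $\mathcal U$. The remaining load-bearing ingredient is the base-case identity $\operatorname{span}N_{\S^k_+}(\Theta^*)=N_{\M_r}(\Theta^*)$, which I would justify quickly from the eigenbasis of $\Theta^*$ by writing $N_{\S^k_+}(\Theta^*)=\{P_2 M P_2^\top:M\preceq 0\}$ with $P_2$ an orthonormal basis of $\ker\Theta^*$.
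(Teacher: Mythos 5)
Your proposal is correct and follows essentially the same route as the paper's proof: both push the problem through the linear bijection $\Theta\mapsto U^\top\Theta U$ onto the lower-dimensional PSD cone, identify $\M^\F_+(X^*)$ with the fixed-rank manifold $\M_+(\Theta^*)$, match the affine span of $\partial\Psi(X^*)$ (computed from \eqref{eq-normal-cone}) with $N_{\M^\F_+(X^*)}(X^*)$ via the adjoint map $S\mapsto USU^\top$, and transport the continuity of $\partial\delta_{\S^{n-m}_+}$ back through the preimage. Your extra care on Condition~(3) (checking that preimages under the fixed surjective map $\mathcal U$ preserve set-valued continuity, with $\ker\mathcal U=\F^\perp$ constant) only makes explicit a step the paper asserts more briefly.
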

\begin{proof}
We verify partial smoothness by checking the three defining conditions in Definition~\ref{def:ps}.
By \eqref{eq-AX-reduce}, there exists a full–row–rank matrix $U\in\R^{(n-m)\times n}$ whose rows form a basis of $\ker(A)$ (hence $AU^\top=0$). Define the linear map 
\[
\Phi:\S^{n-m}\to \F,\qquad \Theta\mapsto U^\top \Theta\, U ,
\]  
wich is a bijective. In particular, for every \(X\in\F\cap\S^n_+\), there exists a unique 
\(\Theta\in\S^{n-m}_+\) such that  
\begin{equation}\label{eq-X-Theta}
    X = \Phi(\Theta) = U^\top \Theta\, U \quad\Longleftrightarrow\quad 
    \Theta=\Phi^{-1}(X)=(UU^\top)^{-1}U X U^\top  (UU^\top)^{-1}.
\end{equation}   
In the special case \(X=X^*\), we write \(\Theta^*:=\Phi^{-1}( X^*) \), so that 
\(X^*=\Phi(\Theta^*)\). We will make frequent use of this correspondence in the subsequent proof.

(1) 
By the linear transformation~\eqref{eq-X-Theta}, we have $$\M^{\F}_+(X^*)=\Phi\bigl(\M_+(\Theta^*)\bigr),$$ where $\M_+(\Theta^*)$ is the 
fixed–rank PSD manifold in $\S^{n-m}$ defined in \eqref{eq:M2}. 
Since $\Phi$ is a bijective linear map, it restricts to a $C^\infty$ diffeomorphism from $\M_+(\Theta^*)$ onto $\M^{\F}_+(X^*)$.
Therefore $\M^{\F}_+(X^*)$ is a smooth embedded submanifold of $\S^{n}$.
Since \(\M^{\F}_+(X^*)\subseteq \F\cap\S^n_+\), the indicator function \(\Psi\) is identically zero on $\M^{\F}_+(X^*)$, and is thus smooth when restricted to $\M^{\F}_+(X^*)$.

(2) The subdifferential of $\Psi$ at $X^*$ is given by
\begin{equation}\label{eq-partial-Psi}
    \partial \Psi (X^*)=N_{\F\cap\S^n_+}(X^*)=\left\{ S \in \S^n \,\mid\, -U S U^\top \succeq 0,\; (U S U^\top) \Theta^* = 0 \right\},
\end{equation}
where we use the expression of the normal cone derived in~\eqref{eq-normal-cone}.
Using a similar strategy as that in Subsection~\ref{subsec-FR}, we obtain
\[
T_{\M^{\F}_+(X^*)}(X^*) = \Big\{ H \in \mathbb{S}^{n} \,\mid\, H = U^\top \widehat{H} U,\; \widehat{H} \in T_{\M_{+}(\Theta^*)}(\Theta^*) \Big\},
\]
where $\M_+(\Theta^*)$ is defined as in \eqref{eq:M2}.
The corresponding normal cone is
\[
N_{\M^{\F}_+(X^*)}(X^*) = \Big\{ S \in \mathbb{S}^{n} \,\mid\, USU^\top\in N_{\M_+(\Theta^*)}(\Theta^*) \Big\}=\Big\{ S \in \mathbb{S}^{n} \,\mid\, (USU^\top) \Theta^*=0 \Big\},
\]
which coincides with the affine span of \( \partial \Psi(X^*) \) in \eqref{eq-partial-Psi}.

(3) 
Since \(\delta_{\S^{n-m}_+}(\Theta)\) is continuous at \(\Theta^*\) 
relative to \(\M_+(\Theta)\), continuity is preserved under the bijection \(\Phi\).  
Indeed,
\[
\Phi^*\big(\partial \Psi (X)\big)
=\left\{ USU^\top \in \S^{n-m} \,\mid\, -U S U^\top \succeq 0,\; (U S U^\top) \Theta = 0 \right\}
=\partial \delta_{ \S^{n-m}_+ }(\Theta),
\]
hence
\(\partial \Psi (X) = (\Phi^*)^{-1}\big( \partial \delta_{\S^{n-m}_+}(\Theta) \big)\),
which proves the claim.

Having verified conditions (1)–(3), we conclude that \(\delta_{\F\cap\S^n_+}\) is partly smooth at \( X^*\) relative to \(\M_+^\F(X^*)\).
\end{proof}
Now we can leverage tools from partial smoothness and manifold identification to show
that our algorithm will find the correct rank for \eqref{eq-CVX-short} that contains a global optimum. The result follows directly from~\cite{lee2022escaping} by analogous arguments and is thus omitted.

\begin{theorem}
	Consider \eqref{eq-CVX-short} with $\Psi$ defined in \eqref{eq:Psi}.
	Consider the two sequences of iterates $\{X^i\}$ and $\{\widehat X^{i}\}$ generated by Algorithm~\ref{alg1}. Then the following hold:
     \begin{enumerate}[leftmargin=*, label=(\roman*)]
	\topsep 0pt
	\partopsep 0pt
	\parsep 0pt
		\item For any subsequence $\{\widehat X^{i_t}\}_t$ such that
			$\widehat X^{i_t} \rightarrow X^*$ for some optimal solution $X^*$ of \eqref{eq-CVX-short}, $X^{i_{t}} \rightarrow X^*$ as
			well.
		\item For the same subsequence as above,
			if $X^*$ satisfies the nondegeneracy condition
			\begin{equation*}
				0 \in \operatorname{relint}\left( \partial F\left( X^* \right)
				\right),
			\end{equation*}
			then there is $t_0 \ge 0$ such that $\rank(X^{i_t+1}) =
			\rank(X^*)$ for all $t \geq t_0$.
	\end{enumerate}
	\label{thm:identify}
\end{theorem}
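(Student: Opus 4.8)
The plan is to cast both claims as instances of the manifold-identification theory for proximal-gradient methods (Hare--Lewis; see \cite{lee2022escaping,lewis2002active}), feeding in the partial smoothness of $\Psi$ supplied by Lemma~\ref{lem:partly-smooth-face-psd}. Write the convex-lifting update as the proximal-gradient operator $G_t(Z):=\Pi_{\F\cap\S^n_+}\bigl(Z-t\nabla\phi(Z)\bigr)=\mathrm{prox}_{t\Psi}\bigl(Z-t\nabla\phi(Z)\bigr)$, so that the PG output at inner index $i$ is $X^{i}=G_{t_{i}}(\widehat X^{i})$ with stepsize $t_{i}\in[t_{\min},t_{\max}]$. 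For (i) I would first record the fixed-point characterization of optimality: since $X^*$ minimizes the convex function $F=\phi+\Psi$, we have $0\in\nabla\phi(X^*)+\partial\Psi(X^*)$, which is equivalent to $X^*=G_t(X^*)$ for \emph{every} $t>0$. Nonexpansiveness of $\Pi_{\F\cap\S^n_+}$ together with the $L$-Lipschitz continuity of $\nabla\phi$ then yields the one-line estimate
\[
\|X^{i_t}-X^*\|=\bigl\|G_{t_{i_t}}(\widehat X^{i_t})-G_{t_{i_t}}(X^*)\bigr\|\le (1+t_{\max}L)\,\|\widehat X^{i_t}-X^*\|,
\]
so that $\widehat X^{i_t}\to X^*$ forces $X^{i_t}\to X^*$, establishing (i).

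For (ii) I would verify the three hypotheses of the identification theorem at $X^*$ for the prox step producing $X^{i_t+1}=G_{t_{i_t+1}}(\widehat X^{i_t+1})$: (a) iterate convergence $X^{i_t+1}\to X^*$, which follows from (i) applied at the shifted index once the input $\widehat X^{i_t+1}$ is known to converge; (b) function-value convergence $F(X^{i_t+1})\to F(X^*)$, which is immediate since each PG output lies in $\F\cap\S^n_+$ (so $\Psi\equiv0$ there) and $\phi$ is continuous; and (c) the existence of subgradients $v^{i_t+1}\in\partial F(X^{i_t+1})$ with $v^{i_t+1}\to0$. Condition (c) is the crux: the optimality condition of the prox step, $\widehat X^{i_t+1}-t_{i_t+1}\nabla\phi(\widehat X^{i_t+1})-X^{i_t+1}\in t_{i_t+1}\,\partial\Psi(X^{i_t+1})$, rearranges to
\[
v^{i_t+1}:=\frac{\widehat X^{i_t+1}-X^{i_t+1}}{t_{i_t+1}}+\nabla\phi(X^{i_t+1})-\nabla\phi(\widehat X^{i_t+1})\in\partial F(X^{i_t+1}),
\]
and since $t_{i_t+1}\ge t_{\min}>0$, both $\widehat X^{i_t+1}$ and $X^{i_t+1}$ converge to $X^*$, and $\nabla\phi$ is continuous, we obtain $v^{i_t+1}\to0$. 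Combining (a)--(c) with the partial smoothness of $\Psi$ from Lemma~\ref{lem:partly-smooth-face-psd} (hence of $F=\phi+\Psi$, because adding the $C^2$ map $\phi$ preserves partial smoothness) and the nondegeneracy $0\in\operatorname{relint}\bigl(\partial F(X^*)\bigr)$, the identification theorem forces $X^{i_t+1}\in\M^{\F}_+(X^*)$ for all large $t$, i.e.\ $\rank(X^{i_t+1})=\rank(X^*)$.

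The step I expect to demand the most care is the index bookkeeping underlying conditions (a) and (c): one must certify that the prox input $\widehat X^{i_t+1}$ feeding the step that produces $X^{i_t+1}$ again converges to $X^*$, even though we are only \emph{given} $\widehat X^{i_t}\to X^*$. Here I would invoke the non-increasing condition \eqref{LR-stop} together with part (i): the iterate $X^{i_t}\to X^*$ is refactorized to initialize the low-rank phase, which returns a \emph{feasible} $\widehat X^{i_t+1}$ with $\phi(\widehat X^{i_t+1})\le\phi(X^{i_t})\to\min F$, so $\widehat X^{i_t+1}$ is a feasible minimizing sequence clustering at $X^*$, and then (i) transfers this to the output. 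Two auxiliary checks are routine and would be dispatched quickly: prox-regularity of $F$ holds automatically since $\Psi$ is a convex indicator and $\phi$ is $C^{1,1}$; and the explicit normal-cone formula \eqref{eq-partial-Psi} shows $\partial F(X^*)=\nabla\phi(X^*)+N_{\F\cap\S^n_+}(X^*)$, so that $0\in\operatorname{relint}(\partial F(X^*))$ is precisely the stated nondegeneracy condition activating the sharp-identification mechanism. Since all three hypotheses have thus been reduced to the convergence in (i), the continuity of $\nabla\phi$, and the bounded stepsize, the remainder follows verbatim from \cite{lee2022escaping}.
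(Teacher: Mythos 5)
Your overall strategy is exactly what the paper intends: the paper gives no proof of Theorem~\ref{thm:identify}, stating only that it follows from \cite{lee2022escaping} by analogous arguments, and your write-up supplies precisely those arguments --- part (i) via nonexpansiveness of the projected-gradient map at the fixed point $X^*$, and part (ii) via the Hare--Lewis identification theorem fed with the partial smoothness of $\Psi$ from Lemma~\ref{lem:partly-smooth-face-psd}, the vanishing subgradients extracted from the prox optimality condition, and the nondegeneracy hypothesis. Part (i) and your verification of hypotheses (a)--(c) for the PG output produced from the convergent inputs $\widehat X^{i_t}$ are correct and complete.

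The one genuine gap is your bridge from index $i_t$ to index $i_t+1$. You argue that $\widehat X^{i_t+1}$ is a feasible minimizing sequence (via \eqref{LR-stop} and $F(X^{i_t})\to\min F$) and therefore ``clusters at $X^*$''. That inference fails: a minimizing sequence of a convex problem clusters in the \emph{solution set}, not at the particular solution $X^*$, and even clustering requires a boundedness argument that is not available here ($\phi$ is not coercive on $\F\cap\S^n_+$ in general, since $\langle C,X\rangle$ is linear and the squared-distance term to the unbounded polyhedron $\P$ is not coercive). If \eqref{eq-CVX-short} has multiple optimal solutions of different ranks, your argument cannot exclude $\widehat X^{i_t+1}\to X^{**}\neq X^*$ with $\rank(X^{**})\neq\rank(X^*)$. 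That said, this difficulty appears to be an artifact of the theorem's indexing, inherited from \cite{lee2022escaping}, where the PG output computed from the $j$-th factorized iterate carries index $j+1$; in Algorithm~\ref{alg1} the PG output computed from $\widehat X^{i}$ is $X^{i}$, so the intended claim is $\rank(X^{i_t})=\rank(X^*)$, for which your conditions (a)--(c) are already verified with no bridging needed. Two minor points: $\phi$ is only $C^{1,1}$ (the squared distance to the polyhedral set $\P$ is not $C^2$), so you should invoke the identification theorem in the form that requires partial smoothness of $\Psi$ alone rather than of $F$; and your observation that $\partial F(X^*)=\nabla\phi(X^*)+N_{\F\cap\S^n_+}(X^*)$ correctly matches the stated relint condition, so nothing further is needed there.
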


\subsection{Dual Variable Recovery}\label{subsec-recover-dual}
After solving the ALM subproblem~\eqref{ALM-sub-Y}, we obtain a primal solution $X$. To verify that $X$ satisfies the KKT conditions of both the ALM subproblem~\eqref{ALM-sub-Y} and the original problem~\eqref{eq-SDP-general}, it is necessary to recover the remaining dual variables. While the ALM framework already recovers the dual variables $y$ and $W$, we still need to recover the remaining dual variables $U$ (for $AX=0$) and $S$ (for $X\succeq 0$).
In \cite{RiNNAL}, a recovery procedure based on the low‐rank factorization was proposed, but it can suffer from singularity issues and produce an inaccurate dual solution.  In \cite{RiNNAL+}, an approximate recovery via a PG step was described, which in turn requires solving a nearest‐correlation‐matrix subproblem using the semismooth Newton‐CG method.

By contrast, we present a {direct} and {efficient} recovery scheme tailored to \eqref{eq-SDP-general}, which (i) avoids any nonsmooth issue, and (ii) requires only a single projection onto the null space of $A$.  Specifically, 
the KKT system for \eqref{ALM-sub-Y} can be written as
\begin{equation}\label{KKT-sub}
    \begin{aligned}
        &AX=0,\quad \nabla \phi(X)-A^\top U-U^\top A =S,\\
        &\langle X,{S}\rangle=0,\quad X \succeq 0,\quad S\succeq 0,
    \end{aligned}
\end{equation}
The following theorem shows how to recover the dual variables $(U,S)$ explicitly from $X$, without solving any large linear system.

\begin{theorem}\label{thm-recovery}
     Assume that \eqref{ALM-sub-Y} admits an optimal solution satisfying the KKT conditions~\eqref{KKT-sub}. A matrix $X\in \F\cap\S^n_+$ is a minimizer of \eqref{ALM-sub-Y} if and only if 
    \begin{align}
        \hS:=J(\nabla \phi(X))J&\succeq 0\label{KKT-reduced-S}\\
        X\hS&=0\label{KKT-reduced-XS}
    \end{align}
    where $J := I - A^\top(A A^\top)^{-1}A$ is the orthogonal projector onto the null space of $A$.
    \end{theorem}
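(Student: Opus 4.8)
The plan is to leverage the convexity of \eqref{ALM-sub-Y} together with the closed-form projection onto the face $\F\cap\S^n_+$ that is already exploited in the convex-lifting phase. Since $\phi$ is convex and differentiable and $\F\cap\S^n_+$ is closed and convex, a feasible $X$ minimizes \eqref{ALM-sub-Y} if and only if it is a fixed point of the projected-gradient map, i.e. $X=\Pi_{\F\cap\S^n_+}(X-t\,\nabla\phi(X))$ for some (equivalently every) stepsize $t>0$. This is just the variational inequality $-\nabla\phi(X)\in N_{\F\cap\S^n_+}(X)$ and holds with no constraint qualification, which is exactly what is needed here since Slater's condition fails on the face. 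The structural input I would use is the identity $\Pi_{\F\cap\S^n_+}(G)=\Pi_{\S^n_+}(JGJ)$ with $J:=I-A^\top(AA^\top)^{-1}A$, established in Subsection~\ref{subsec-CVX-phase} (cf.\ \cite[Lemma~2]{RiNNAL+}), which decouples the affine face constraint from the PSD cone.

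The computation is then short. First I record that $J$ is the orthogonal projector onto $\ker(A)$, so $J=J^\top=J^2$ and $AJ=0$, and that every $X\in\F$ has $\operatorname{range}(X)\subseteq\ker(A)$, whence $JX=X$ and $JXJ=X$. Substituting $G=X-t\,\nabla\phi(X)$ into the projection identity and using $JXJ=X$ gives $\Pi_{\F\cap\S^n_+}(X-t\,\nabla\phi(X))=\Pi_{\S^n_+}(X-t\,J\nabla\phi(X)J)=\Pi_{\S^n_+}(X-t\,\hS)$, so optimality of $X$ is equivalent to the single PSD-projection fixed-point equation $X=\Pi_{\S^n_+}(X-t\,\hS)$. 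By the variational characterization of the projection onto $\S^n_+$, this holds exactly when $-t\,\hS\in N_{\S^n_+}(X)=\{V\in\S^n:\ V\preceq0,\ VX=0\}$, i.e. when $\hS\succeq0$ and $\hS X=0$; symmetry gives $\hS X=0\Leftrightarrow X\hS=0$. This is precisely \eqref{KKT-reduced-S}–\eqref{KKT-reduced-XS}, and since the chain of statements is an equivalence it proves both directions simultaneously.

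Finally, to reconnect this certificate with the multiplier form \eqref{KKT-sub} (the actual goal of the dual-recovery subsection), I would set $S:=\hS$ and produce a compatible $U$ by showing that $\nabla\phi(X)-J\nabla\phi(X)J$ lies in the range of the map $U\mapsto A^\top U+U^\top A$, i.e. in $\F^\perp=\{H\in\S^n:\ JHJ=0\}$; this membership is immediate since $J(\nabla\phi(X)-J\nabla\phi(X)J)J=0$, and an explicit recovery is $U=(AA^\top)^{-1}A\,\nabla\phi(X)\,(I-\tfrac12 A^\top(AA^\top)^{-1}A)$, after which $(X,U,S)$ verifies all of \eqref{KKT-sub}. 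The only genuinely nontrivial ingredient in the whole argument is the projection-composition identity $\Pi_{\F\cap\S^n_+}=\Pi_{\S^n_+}\circ(J\cdot J)$ together with the collapse $JXJ=X$ on the face; once these are available, reducing the PSD fixed-point equation to the complementarity pair $\{\hS\succeq0,\ X\hS=0\}$ is routine. An alternative, should one wish to avoid the projection identity, is to substitute the parametrization of \eqref{eq-AX-reduce} and work in the reduced variable $\Theta\succeq0$: the resulting reduced problem satisfies Slater's condition (its feasible set has a positive definite point $\Theta=I$), so the ordinary PSD-KKT conditions apply there, and a congruence argument transfers them back to $\hS\succeq0$ and $X\hS=0$.
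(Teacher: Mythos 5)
Your proposal is correct, and it reaches the conclusion by a genuinely different route from the paper's proof in the forward direction. The paper argues entirely through the multiplier system \eqref{KKT-sub}: it assumes the minimizer comes with dual variables $(U,S)$, computes $\hS=JSJ\succeq 0$ and $X\hS=XSJ=0$ using $JA^\top=0$ and $XA^\top=0$, and for the converse constructs the same explicit $U$ you give (the paper calls it $V$) to rebuild $S=\hS$. You instead characterize optimality of the convex subproblem by the normal-cone/projected-gradient fixed-point condition $X=\Pi_{\F\cap\S^n_+}(X-t\nabla\phi(X))$, collapse the projection via the identity $\Pi_{\F\cap\S^n_+}(G)=\Pi_{\S^n_+}(JGJ)$ from Subsection~\ref{subsec-CVX-phase} and $JXJ=X$, and read off $\hS\succeq 0$, $X\hS=0$ from $N_{\S^n_+}(X)$. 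This buys you two things: the equivalence is a single chain that needs no constraint qualification and, strictly speaking, does not even need the standing assumption that some optimal solution admits KKT multipliers (the paper's forward direction implicitly relies on the standard fact that, for a convex program, multipliers attached to one optimal solution serve all optimal solutions); and the multiplier recovery becomes a separate, purely algebraic addendum rather than part of the logical skeleton. What the paper's argument buys in exchange is independence from the projection formula $\Pi_{\F\cap\S^n_+}=\Pi_{\S^n_+}\circ(J\cdot J)$ — it uses only elementary manipulations of the KKT identities. Your verification that the range of $U\mapsto A^\top U+U^\top A$ is $\{H\in\S^n:\,JHJ=0\}$ and your explicit $U$ coincide exactly with the paper's construction, so the reconnection to \eqref{KKT-sub} is sound.
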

\begin{proof}
    (1) Suppose \eqref{ALM-sub-Y} admits an optimal solution $X$ that satisfies the KKT conditions~\eqref{KKT-sub}. Then we have:
    \[
        \hS := J(\nabla \phi(X))J 
        = J(\nabla \phi(X) - A^\top U - U^\top A )J 
        = JSJ \succeq 0,
    \]
    where the second equality follows from $J A^\top = 0$, thus confirming \eqref{KKT-reduced-S}. Moreover, since $X \succeq 0$ and $S \succeq 0$, we may write $X = RR^\top$ and $S = VV^\top$ for some matrices $R$ and $V$. From $\langle X, S \rangle = 0$, it follows that
    \[
        \langle RR^\top, VV^\top \rangle = \|V^\top R\|_F^2 = 0,
    \]
    which implies $XS = RR^\top VV^\top = 0$. Therefore,
    \[
        X\hS = XJSJ = XSJ = 0,
    \]
    where the second equality follows from $XA^\top = 0$. Thus, \eqref{KKT-reduced-XS} is satisfied.\\
	(2) Conversely, suppose \eqref{KKT-reduced-S}, \eqref{KKT-reduced-XS} hold. Define
    \[
        V := (AA^\top)^{-1}A\big(\nabla \phi(X) \big)\left(I - \tfrac{1}{2}A^\top(AA^\top)^{-1}A\right).
    \]
    Then we have
    \[
        S := \nabla \phi(X) - A^\top V - V^\top A =J(\nabla\phi(X) )J=\hS\succeq 0.
    \]
    Moreover, since $X\hS = 0$, we have $\langle X, S \rangle = \Tr(XS)= \Tr(X\hS) = 0$. Therefore, the KKT conditions \eqref{KKT-sub} are satisfied.
\end{proof}
\begin{remark}
    The theorem further implies that for any SDP problem exhibiting facial structure, there must exist at least one KKT point at which the strict complementarity condition does not hold.
\end{remark}
\begin{remark}
Typically, verifying global optimality of an SDP requires solving a linear system of size $mn$ to recover all $mn$ dual variables. In contrast, our recovery strategy involves computing and storing the projector $J =(AA^\top)^{-1}A$ only once, after which it can be efficiently applied to $\nabla \phi(X)$.  This leads to a numerically stable, closed‐form recovery of $(U,S)$, significantly simplifying the global‐optimality verification process.
\end{remark}

\subsection{Efficient projection}

When reformulating \eqref{eq-SDP} into \eqref{eq-SDP-general}, we must enforce the monomial‐consistency constraints ($X\in\L^\cA$), the polyhedral constraints ($X\in\P^\cA$), and the normalization constraint ($\langle H^0,X\rangle=1$). In this subsection, we focus specifically on the case where $\P^\cA = \bN^\cA$ imposes componentwise nonnegativity. We define the affine subspace
\[
\tL^\cA := \L^\cA \cap \left\{X\in\S^\cA \mid \langle H^0,X\rangle = 1\right\}.
\]
Rather than incorporating all constraints directly into the linear system $\Q(X)=b$, which would substantially increase the number of constraints, we instead aggregate them into a single feasibility set:
\[
\P := \tL^\cA \cap \bN^\cA.
\]
This formulation allows us to enforce all constraints through a single projection onto \(\P\). However, since \(\tL^\cA\) involves \(\Omega(n^{2\tau})\) constraints and \(\bN^\cA\) involve \(\Omega(n^{2\tau})\) inequality constraints, a naive projection \(\Pi_{\P}(X)\) requires \(\O(n^{7\tau})\) time, which is computationally prohibitive for large \(n\) or \(\tau\).

To overcome this computational bottleneck, we now show that the projection onto $\P$ can in fact be computed efficiently in \(\mathcal{O}(n^{2\tau})\) time. The key observation is that the projection onto \(\P = \tL^\cA \cap \bN^\cA\) admits a decomposition into sequential projections:

\begin{lemma}\label{lemma-proj-decomp}
The projection onto the intersection \(\P = \tL^\cA \cap \bN^\cA\) satisfies
\begin{equation}\label{proj-decomposition}
    \Pi_{\P} = \Pi_{\bN^\cA} \circ \Pi_{\tL^\cA}.
\end{equation}
\end{lemma}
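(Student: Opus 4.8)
The plan is to verify the claimed identity through the variational (obtuse--angle) characterization of the Euclidean projection onto a nonempty closed convex set $C\subseteq\S^\cA$: a point $Z$ equals $\Pi_C(X)$ if and only if $Z\in C$ and $\langle X-Z,\,W-Z\rangle\le 0$ for all $W\in C$. Accordingly, I would set $Y:=\Pi_{\tL^\cA}(X)$ and $Z:=\Pi_{\bN^\cA}(Y)$, so that $Z$ is exactly the right-hand side of \eqref{proj-decomposition} applied to $X$, and then establish two assertions: (I) $Z\in\P=\tL^\cA\cap\bN^\cA$; and (II) $\langle X-Z,\,W-Z\rangle\le 0$ for every $W\in\P$. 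Together these give $Z=\Pi_\P(X)$, which is precisely \eqref{proj-decomposition}.

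For (I), the key is to exploit the explicit form of the two projections. Projecting onto the affine subspace $\tL^\cA$ replaces the entries of $X$ within each monomial-consistency class $\{(\alpha,\beta):\alpha+\beta=\text{const}\}$ by their common averaged value and fixes the corner entry indexed by $(e_1^\tau,e_1^\tau)$ to $1$; in particular $Y$ is constant on each consistency class and $Y_{e_1^\tau,e_1^\tau}=1$. (The class of the corner is the singleton $\{(e_1^\tau,e_1^\tau)\}$, since $\alpha+\beta=2e_1^\tau$ with $|\alpha|=|\beta|=\tau$ forces $\alpha=\beta=e_1^\tau$, so the averaging and the normalization do not interfere.) Since $\Pi_{\bN^\cA}$ acts entrywise as the positive part $\max(0,\cdot)$, it preserves constancy on each class, whence $Z\in\L^\cA$; and it leaves the positive corner entry unchanged, $Z_{e_1^\tau,e_1^\tau}=\max(0,1)=1$, so $\langle H^0,Z\rangle=1$ and $Z\in\tL^\cA$. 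As $Z\in\bN^\cA$ by construction, we conclude $Z\in\P$. This compatibility is the heart of the argument and is exactly what makes a single forward pass exact (rather than requiring an iterative Dykstra-type scheme); it is also where the prescribed order of the two projections matters.

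For (II), I would split $X-Z=(X-Y)+(Y-Z)$ and test the two terms against an arbitrary $W\in\P$. Writing $\tL^\cA=Y+L_0$ with $L_0$ its direction subspace, the affine-projection identity gives $X-Y\in L_0^\perp$; since $W,Z\in\tL^\cA$ (using (I)) we have $W-Z\in L_0$, hence $\langle X-Y,\,W-Z\rangle=0$. For the second term, the projection characterization for $\bN^\cA$ gives $Y-Z\in N_{\bN^\cA}(Z)$, so $\langle Y-Z,\,V-Z\rangle\le 0$ for all $V\in\bN^\cA$; taking $V=W\in\P\subseteq\bN^\cA$ yields $\langle Y-Z,\,W-Z\rangle\le 0$. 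Adding the two contributions gives $\langle X-Z,\,W-Z\rangle\le 0$ for all $W\in\P$, which together with (I) establishes $Z=\Pi_\P(X)$.

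I expect step (I) to be the main obstacle, since it is the only place where the specific interaction between the two sets is used: the decomposition is false for generic convex sets, and here it hinges on the facts that $\tL^\cA$ forces the output to be constant on each consistency class (a structure preserved by the entrywise nonnegative projection) and that the fixed corner value $1$ is itself nonnegative. Once (I) is in hand, the inequality in (II) follows routinely from the standard projection characterizations for an affine subspace and for the nonnegative cone.
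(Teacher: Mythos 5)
Your proof is correct, but it takes a different route from the paper's. The paper argues by direct computation: it partitions the index set $\cA\times\cA$ into the consistency blocks $B_\gamma=\{(\alpha,\beta):\alpha+\beta=\gamma\}$, observes that the projection onto $\P$ decouples across blocks, and solves the one-dimensional problem $\arg\min_{x\ge0}\sum_{i\in B_\gamma}(x-X_i)^2=\max\{\mathrm{mean},0\}$ on each block, which visibly coincides with the composition $\Pi_{\bN^\cA}\circ\Pi_{\tL^\cA}$ evaluated blockwise. You instead never compute $\Pi_\P$ explicitly: you verify that $Z=\Pi_{\bN^\cA}(\Pi_{\tL^\cA}(X))$ satisfies the variational characterization of $\Pi_\P(X)$, splitting $X-Z=(X-Y)+(Y-Z)$ and using orthogonality to the direction subspace of the affine set $\tL^\cA$ together with the obtuse-angle inequality for $\bN^\cA$. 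Both arguments hinge on the same structural fact — that the entrywise positive part preserves constancy on each consistency class and fixes the corner entry $1$, so the composition lands back in $\tL^\cA$ (and you correctly note that the corner class is the singleton $\{(e_1^\tau,e_1^\tau)\}$, so averaging and normalization do not interfere). What each buys: the paper's computation yields the explicit blockwise closed form $\max\{\mathrm{mean},0\}$ as a byproduct, which is exactly what the subsequent complexity discussion and implementation rely on; your argument is essentially a proof of the general principle that for an affine set $C_1$ and closed convex $C_2$ with $\Pi_{C_2}(\Pi_{C_1}(X))\in C_1$, one has $\Pi_{C_1\cap C_2}=\Pi_{C_2}\circ\Pi_{C_1}$, and so it would extend verbatim to other polyhedral cones $\P^\cA$ whose projections preserve $\tL^\cA$.
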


\begin{proof}
Partition the index set \(\cA \times \cA\) into blocks
\[
B_\gamma := \{\,(\alpha,\beta) \in \cA \times \cA \mid \alpha + \beta = \gamma\,\},
\]
so that each block \(B_\gamma\) consists of pairs $(\alpha,\beta)$ summing to the same 
monomial exponent $\gamma$. The subspace \(\tL^\cA\) enforces equality among entries within each block, and the cone \(\bN^\cA\) enforces nonnegativity componentwise. Therefore, the projection onto \(\P\) decouples across blocks.

Let \(\gamma_0 = (e_1^\tau , e_1^\tau)\) be the unique multi-index where \(H^0\) has support. The normalization constraint \(\langle H^0, X \rangle = 1\) in $\tL^\cA$ implies
\[
\Pi_{\P}(X)|_{B_{\gamma_0}} = 1 = \left(\Pi_{\bN^\cA} \circ \Pi_{\tL^\cA}\right)(X)|_{B_{\gamma_0}}.
\]
For \(\gamma \ne \gamma_0\), the blockwise projection has the closed-form:
\[
\Pi_{\P}(X)|_{B_\gamma}
= \underset{x \ge 0}{\arg\min}\, \sum_{i \in B_\gamma} (x - X_i)^2
= \max\left\{ \frac{1}{|B_\gamma|} \sum_{i \in B_\gamma} X_i,\; 0 \right\}
= \left(\Pi_{\bN^\cA} \circ \Pi_{\tL^\cA}\right)(X)|_{B_\gamma}.
\]
Since this holds for every block, the identity \eqref{proj-decomposition} follows.
\end{proof}

By Lemma~\ref{lemma-proj-decomp}, the projection onto \(\P^\cA\) can be computed efficiently by performing sequential projections onto \(\tL^\cA\) and \(\bN^\cA\). The projection onto \(\bN^\cA\) is simply an entrywise maximum with zero and thus costs \(\O(n^{2\tau})\).
The naive projection onto \(\tL^\cA\) is given by
\[
\Pi_{\tL^\cA}(X) = \left(I - \tilde{\A}^*(\tilde{\A} \tilde{\A}^*)^{-1} \right)\left(\tilde{\A}(X) - \tilde{b}\right),
\]
where \(\tilde{\A}(X) = \tilde{b}\) encodes the linear equalities defining \(\tL^\cA\). Since there are \(\Omega(n^{2\tau})\) such equalities, directly factorizing \(\tilde{\A} \tilde{\A}^*\) incurs a cost of \(\O(n^{6\tau})\), which is again computationally impractical for large-scale instances.

Instead, the structure revealed in the proof of Lemma~\ref{lemma-proj-decomp} offers a more efficient alternative: the projection onto \(\tL^\cA\) reduces to computing the arithmetic mean over each block \(B_\gamma\), and fixing all entries in the block \(B_{\gamma_0}\) to one due to the normalization constraint. Since there are at most \(\O(n^{2\tau})\) distinct index sums \(\gamma\), this averaging step requires only \(\O(n^{2\tau})\) operations. After this projection, all entries corresponding to the same monomial automatically coincide, satisfying the monomial-consistency requirement. In practice, this averaging is implemented using matrix-based operations that group and update entries with the same index sum. The procedure can be further accelerated by exploiting parallel computation or sparse data structures when appropriate.

\section{Extension of RiNNAL-POP for solving moment-SOS relaxation problems}
\label{sec-extension}

RiNNAL-POP, presented in the last section, is designed to solve the polyhedral–SDP relaxation \eqref{eq-SDP} of the polynomial optimization problem \eqref{eq-POP}. However, the framework can be extended to handle the moment–SOS relaxation \eqref{eq-eq-ieq-general-moment} of the general POP formulation \eqref{eq-eq-ieq-general}. In this section, we discuss how such an extension can be achieved.

As mentioned in Subsection~\ref{subsec-FR}, facial constraints naturally arise in moment–SOS relaxations. We specifically exploit this structure in the algorithm design and consider solving the following general $\tau$th-order moment–SOS relaxation of \eqref{eq-eq-ieq-general}:
\begin{equation}\label{eq-eq-ieq-general-facial-moment}
\min\left\{\<Q^0,X\> \;\middle|\;
\begin{aligned}
    &\langle H^0, X \rangle=1,\;
     AX=0,\; \langle Q^i, X \rangle=0,\;i\in [m],\\
    & \M_{h_j}(X)\succeq 0,\; j\in[p],\; X\in \S_+^{\cA_\tau} \cap \mathcal{L}^{\cA_\tau}
\end{aligned}
\right\}.
\end{equation}
This problem can be compactly written as 
\begin{equation}\label{eq-eq-ieq-general-facial-moment-compact}
\min\left\{\<Q^0,X\> \;\middle|\;
\Q(X)=0,\; \M_{h_j}(X)\in \K^{(j)},\; j\in[p],\; X\in\F\cap \P\cap \K
\right\},
\end{equation}
where $\K:=\S^{\cA_\tau}_+$ and $\K^{(j)}:=\S^{\cA_{\tau-\lceil h_j\rceil}}_+$, $\Q(X):=\left[ \langle Q^1,X\rangle,\dots, \langle Q^m,X\rangle\right]^\top$ and 
\begin{equation*}
    \F:=\left\{X\in\S^{\cA_\tau}\mid AX=0\right\},\quad
    \P:=\left\{X\in\L^{\cA_\tau}\mid \langle H^0, X \rangle=1\right\}.
\end{equation*}
To apply the augmented Lagrangian framework, we introduce auxiliary variables $Y\in\K$ and $Y^{(j)}\in\Kj$ for $j\in[p]$, and reformulate \eqref{eq-eq-ieq-general-facial-moment-compact} into the equivalent splitting form below:
\begin{equation}\label{eq-eq-ieq-general-facial-moment-split}
\min\left\{
    \<Q^0,X\>
    +\delta_{\F\cap \K}(X)
    +\delta_{\P}(Y)+\sum_{j=1}^p\delta_{\Kj}(\Yj)
 \;\middle|\;
\begin{aligned}
     &\Q(X)=0,\; X-Y=0,\;\\
     &\M_{h_j}(X)-Y^{(j)}= 0,\; j\in[p] 
\end{aligned}
\right\}.
\end{equation}
RiNNAL-POP can then be extended to solve problem \eqref{eq-eq-ieq-general-facial-moment-split} via the augmented Lagrangian method. The corresponding augmented Lagrangian function is given by
\begin{align*}
    &L_{\sigma}(X,Y,\{Y^{(j)}\};\ y,W,\{W^{(j)}\})
    :=
    \<Q^0,X\> 
    +\frac{\sigma}{2}\|\Q(X)-\sigma^{-1}y\|^2
    +\frac{\sigma}{2}\|X-Y-\sigma^{-1}W\|^2\\
    &\qquad\qquad
    +\frac{\sigma}{2}\sum_{j=1}^p \|\M_{h_j}(X)-Y^{(j)}-\sigma^{-1}W^{(j)}\|^2 
    -\frac{1}{2\sigma}\|y\|^2-\frac{1}{2\sigma}\|W\|^2-\frac{1}{2\sigma}\sum_{j=1}^p\|W^{(j)}\|^2.
\end{align*}
Given the initial penality parameter $\sigma_0>0$, dual variable $y_0\in \R^{d}$, $W_0\in\K$ and $W^{(j)}_0\in\Kj$, the augmented Lagrangian method performs the following steps at the $(k+1)$-th iteration:
\begin{align*}
    (X_{k+1},Y_{k+1},\{Y^{(j)}_{k+1}\})&=\operatorname{argmin} \left\{L_{\sigma_k}(X,Y,\{Y^{(j)}\};\ y_k,W_k,\{W^{(j)}_{k}\})  \;\middle|\; \begin{aligned}
        &X\in {\F\cap \K},\; Y\in\P,\\ &Y^{(j)}\in \Kj, \; j\in[p]
    \end{aligned} \right\},\\
    y_{k+1}&=y_k-\sigma_k \Q (X_{k+1}),\notag\\
    W_{k+1}&=W_k-\sigma_k(X_{k+1}-Y_{k+1}),\notag\\
    W^{(j)}_{k+1}&=W^{(j)}_k-\sigma_k(\M_{h_j}(X_{k+1})-Y^{(j)}_{k+1}),\quad j\in[p],\notag
\end{align*}
where $\sigma_k \uparrow \sigma_{\infty} \leq+\infty$ are positive penalty parameters.
Let $\widetilde y$, $\widetilde W\in \bS^{n}$ and $\widetilde W^{(k)}$ be fixed. The ALM primal subproblem can be expressed as:
\begin{equation}
    \min\ \Big\{L_{\sigma}(X,Y,\{\Yj\};\ \widetilde y,\widetilde W,\{\widetilde W^{(k)}\} ) \mid X\in {\F\cap \K},\; Y\in\P,\; Y^{(j)}\in \Kj, \; j\in[p] \Big\}.\label{mom-ALM-sub-XY-fix}
\end{equation}
In \eqref{mom-ALM-sub-XY-fix}, we can first minimize with respective to $Y\in\P$ and $Y^{(j)}\in \Kj$ to get the following convex optimization problem related only to $X$:
\begin{equation}\label{mom-ALM-sub-Y}
    \min\left\{
    \begin{aligned}
        \phi (X) &:= \langle C,X\rangle 
        + \frac{\sigma}{2} \|\sigma^{-1}\wt y-(\Q(X)-b)\|^2 \\
        &+ \frac{\sigma}{2}\|X-\sigma^{-1}\wt W-\Pi_{\P}(X-\sigma^{-1}\wt W)\|^2\\
        &+ \frac{\sigma}{2}\sum_{j=1}^{p}\|\Pi_{\Kj}( \sigma^{-1}\wt W^{(j)} - \Mj (X))\|^2
    \end{aligned}
    \;\middle|\; X\in {\F\cap \K} \right\},
\end{equation}
where we used the Moreau decomposition theorem in \cite{moreau1962decomposition}, which states that $X=\Pi_{\mathcal{C}}(X)-\Pi_{\mathcal{C}^*}(-X)$ for any closed convex cone $\mathcal{C}$.
Once we obtain the optimal solution $\wt X$ of \eqref{mom-ALM-sub-Y}, we can recover the optimal solution $\wt Y=\Pi_P(\wt X-\sigma^{-1}\wt W)$ and $\wt Y^{(j)}=\Pi_{\Kj}(\Mj(\wt X)-\sigma^{-1}\wt W^{(j)})$. 

From here, we can apply the same techniques 
 for solving \eqref{ALM-sub-Y} in Section~\ref{sec-alg} to 
solve the reduced subproblem \eqref{mom-ALM-sub-Y}.

\section{Numerical experiments}\label{sec-exp}

In this section, we conduct numerical experiments to illustrate the effectiveness of RiNNAL-POP for solving polyhedral-SDP relaxation problems of the form \eqref{eq-SDP-general}. All experiments are performed using {\sc Matlab} R2023b on a workstation equipped with Intel Xeon E5-2680 (v3) processors and 96GB of RAM.

\medskip

\noindent\textbf{Baseline Solvers}.  
We compare RiNNAL-POP with SDPNAL+ \cite{SDPNAL,SDPNALp1,SDPNALp2}. While other ALM-based and low-rank SDP solvers exist, we do not include them in our comparison, as they are either incompatible with \eqref{eq-SDP-general} or 
are clearly outperformed by SDPNAL+ in preliminary tests. 

\medskip

\noindent\textbf{Stopping Conditions}. Based on the KKT conditions \eqref{KKT} for \eqref{eq-SDP-general}, we define the following relative KKT residuals to assess the accuracy of the solution:
\small{
\begin{align*}
\operatorname{R_p}:=
\max\left\{
\frac{{\|\Q(X)- b\|}}{1+\| b \|},\;
\frac{\|X-Y\|}{1+\|X\|+\|Y\|}
\right\},\;\;
\operatorname{R_d} :=  
\frac{\|\Pi_{\S_+^{n+1}}(S)\|}{1+\|S\|},\;\;
\operatorname{R_{c}} := 
\frac{|\langle X, S\rangle|}{1+\|X\|+\|S\|}
.
\end{align*}
}
\normalsize
\noindent
We omit residuals associated with the constraints
$X\in \F\cap\S^{n+1}_+$, $Y\in\P$, and $W\in N_\P(Y)$,
as the first is inherently satisfied by the low-rank factorization of $X$, while the latter two are automatically satisfied by the design of the ALM iteration.
For a given tolerance $\tol> 0$, RiNNAL-POP terminates when the maximum residual satisfies $\operatorname{R_{max}}:=\max\{\operatorname{R_p},\operatorname{R_d},\operatorname{R_c}\}<\tol$ or the maximum time limit $\timelimit$ is reached. In our experiments, we set $\tol = 10^{-6}$ and $\timelimit = 3600\tt{(secs)}$ for all solvers.

\medskip

\noindent
\textbf{Implementation}. 
In RiNNAL-POP, each augmented Lagrangian subproblem is approximately solved by using a two-phase scheme that alternates between a low-rank phase and a convex-lifting phase. Both phases employ the projected gradient (PG) method, but with different step selection strategies: the low-rank phase uses Barzilai–Borwein step sizes combined with a nonmonotone line search strategy~\cite{RiNNAL, iannazzo2018riemannian, gao2021riemannian, tang2023feasible}, while the convex-lifting phase uses a fixed stepsize of $1/\sigma_k$.
The penalty parameter $\sigma_k$ is initialized at $\sigma_0 = 1$ and updated dynamically: it is doubled if $\operatorname{R_p}/\|\nabla f_r(R)\| \geq 10$, and halved otherwise. The initial rank is set to $r_0 = \min\{200, \lceil n/5 \rceil\}$, and the initial point $R_0$ is selected randomly from the feasible set $\mathcal{M}_{r_0}$. In each ALM iteration, the low-rank phase performs at most 50 PG iterations, followed by a single PG step in the convex-lifting phase.

\medskip

\noindent\textbf{Table Notations.} 
We use `-' to indicate that an algorithm did not achieve the required tolerance $\tol$ within the maximum time limit $\timelimit$. 
Results are reported only when the maximum KKT residual is below $10^{-2}$. A superscript ``$\dagger$'' on the KKT residual indicates that only moderate accuracy was achieved.
In the first column, we list the polynomial‐problem dimension $n$, the SDP‐problem dimension $N$, and the number of equality constraints $m$, respectively.
For the column labeled ``Iteration'' associated with SDPNAL+, the first entry denotes the number of outer iterations, the second entry denotes the total number of semismooth Newton inner iterations, and the third indicates the total number of ADMM+ iterations. Similarly, for the column labeled ``Iteration'' associated with RiNNAL-POP, the first entry corresponds to the number of ALM iterations, the second denotes the total number of projected gradient descent iterations in the low-rank phase, and the third reports the total number of convex lifting steps.
The column labeled ``Rank'' indicates the rank of the final iterate, computed using a threshold of $10^{-6}$ for both RiNNAL-POP and SDPNAL+.
The ``Objective'' column denotes the value of the objective function, while the total computation time is listed under ``Time''. We denote RiNNAL-POP by {RPOP} for brevity.

\medskip

\noindent\textbf{Tested Problems.}
We summarize below the polynomial‐optimization models examined in our experiments, together with their decision‐variable dimension \(n\), POP order (Deg), relaxation order (Relax), the form of equalities (Eq), the form of inequalities (Ineq), the presence of nonnegativity constraints ($\geq 0$), and the observed runtime ratio SDPNAL+/RPOP (Speedup).

\begin{longtable}{@{}lcccllcr@{}}
\caption{Summary of tested problem classes.} \label{table:problem_summary} \\
\toprule
Problem & $n$ & Deg & Relax & Eq & Ineq & $\ge 0$ & Speedup \\
\midrule
\endfirsthead
\toprule
Problem & $n$ & Deg & Relax & Eq & Ineq & $\ge 0$ & Speedup \\
\midrule
\endhead
\eqref{eq-STQP}  &$1000-6000$ & $2$ & $1$ & Simplex & - & $\checkmark$ & $\phantom{0}34-36\phantom{0}$ \\
\eqref{eq-STQP}  &$20-80$ & $2$ & $2$ & Simplex & -  & $\checkmark$ & $\phantom{00}3-6\phantom{00}$\\
\eqref{eq-BIQ}  &$20-60$ & $2$ & $2$ & Binary & - & $\checkmark$ & $\phantom{0}10-25\phantom{0}$\\
\eqref{eq-MBP}  &$30-60$ & $2$ & $2$ & Equipartition & - & $\checkmark$ & $\phantom{0}21-30\phantom{0}$\\
\eqref{eq-MQKP-slack} &$20-40$ & $2$ & $2$ & Binary & Knapsack & $\checkmark$ & $\phantom{00}10-30\phantom{0}$\\
\eqref{eq-QB-slack} &$20-80$ & $4$ & $2$ & - & Ball & - & $\phantom{00}6-16\phantom{0}$ \\
\eqref{eq-KM} &$20-60$ & $4$ & $2$ & Mean–variance & - & $\checkmark$ & $200-278$\\
\eqref{eq-CST}  &$1000-6000$ & $2$ & $1$ & Simplex & - & $\checkmark$ & $\phantom{0}18-500$\\
\eqref{eq-CST}  &$20-60$ & $4$ & $2$ & Simplex & - & $\checkmark$ & $\phantom{0}13-173$\\
(NSTF) &$20-80$ & $3,4$ & $2$ & Spherical & - & $\checkmark$ & $\phantom{00}2-7\phantom{00}$\\
\eqref{eq-NTF-reduced} &$10-25$ & $3,4$ & $2$ & Block spherical & - & $\checkmark$ & $\phantom{00}2-10\phantom{0}$\\
\bottomrule
\end{longtable}

\subsection{Comparison of relaxation quality and solver performance}\label{subsec-tightness}

We compare the relaxation bound quality and computational efficiency of different SDP-based relaxations for the standard quadratic problem:
\begin{equation}\label{eq-STQP-tightness}
v^*=\min\left\{x^\top Q x \mid\  e^\top x = 1,\ x\in\R^n_+\right\}.\tag{StQP}
\end{equation} 
Specifically, we consider four SDP-based relaxations distinguished by the inclusion of polyhedral (nonnegativity) constraints, RLT constraints, and PSD localizing matrices constraints. These relaxations are summarized in Table~\ref{tab:relaxation-types}.
\begin{table}[ht!]
    \centering
    \renewcommand{\arraystretch}{1.25}
    \begin{tabular}{|c|c|c|c|l|}
        \hline
         Relaxation&  $X\geq 0$& RLT & 
         $\begin{array}{l}\mbox{Localizing}\\ \mbox{PSD}\end{array}$ 
         & Remarks \\
         \hline
         Poly-SDP& $\checkmark$ & - & - & Polyhedral SDP relaxation~\eqref{eq-SDP} \\ 
         \hline
         Poly-SDP-RLT& $\checkmark$ &$\checkmark$  & - & Add RLT (cf.\ Figure \ref{fig:RLT-constraints}
  in Subsection~\ref{subsec-FR}) \\
         \hline
         Mom-SOS& - & $\checkmark$ & $\checkmark$ & Moment-SOS relaxation~\eqref{eq-eq-ieq-general-moment}\\
         \hline
         Poly-Mom-SOS& $\checkmark$ &$\checkmark$  & $\checkmark$ & Add $X\geq 0$\\
         \hline
    \end{tabular}
    \caption{Overview of SDP-based relaxations.}
    \label{tab:relaxation-types}
\end{table}

The relaxation quality is evaluated using the relative gap:
\[
\%\mathrm{gap}
\;=\;
\frac{v^* - v}{\max\{1,\lvert v^*\rvert\}}
\;\times\;
100\%,
\]
where $v^*$ is the optimal value of the StQP and $v$ is the relaxation value.
We evaluate both tightness and computational performance on two types of test instances. To ensure a fair comparison, the reported runtimes exclude 
the time to generate the SDP data
and preprocessing for all solvers.

\medskip
\noindent\tb{Random instances.}
We generate random symmetric matrices $Q\in\mathbb{S}^n$ with entries sampled from $\mathcal{N}(0,1)$, for $n \in \{5, 10, 15, 20\}$. Each relaxation is solved at orders $\tau=1$ and $\tau=2$, using RiNNAL-POP (RPOP) with a solver tolerance of \(\mathrm{tol}=10^{-6}\). 
As the package Gloptipoly~\cite{henrion2009gloptipoly} is commonly used to solve Mom-SOS relaxatons 
of POPs, 
for benchmarking purpose,  we also report results obtained by using GloptiPoly to solve 
the Mom-SOS relaxation here
with the same tolerance.
The resulting relative gaps and computational times are shown in Table~\ref{tab-tightness-random} and Figure~\ref{fig-StQP-random}. In Figure~\ref{fig-StQP-random}, we omit the results for the second-order Poly–SDP–RLT relaxation, as the first-order relaxation is already tight. We omit the results for the first-order Poly–Mom–SOS relaxation, as it is equivalent to the first-order Poly–SDP–RLT relaxation. 
\begin{figure}[ht!]
  \centering
  \begin{minipage}[c]{0.45\textwidth}
    \centering
    \renewcommand{\arraystretch}{1.25} 
    \begin{tabular}{|c|c|c|}
      \hline
      Order & 1st & 2nd \\
      \hline
      Poly–SDP       & $\infty$ & $\infty$ \\
      Poly–SDP–RLT   & $0$      & $0$      \\
      Mom–SOS        & $\infty$ & $0$      \\
      Poly–Mom–SOS   & $0$      & $0$      \\
      \hline
    \end{tabular}
    \captionof{table}{Relative gaps (\%) of the four relaxations for the random instance.}
    \label{tab-tightness-random}
  \end{minipage}
  \hfill
  \begin{minipage}[c]{0.5\textwidth}
    \centering
    \includegraphics[width=\linewidth]{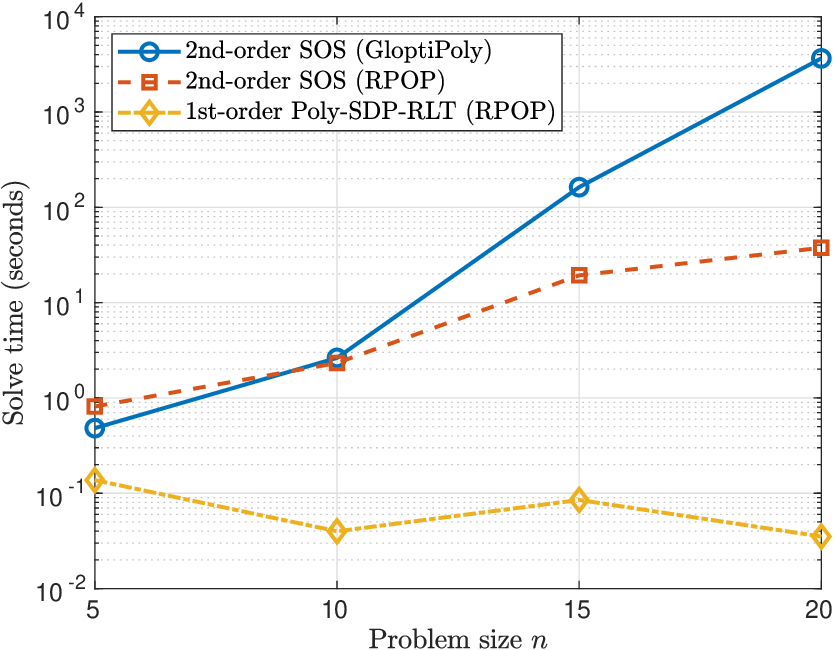}
    \captionof{figure}{Computational time comparison between GloptiPoly and RPOP.}
    \label{fig-StQP-random}
  \end{minipage}
\end{figure}

\medskip
\noindent\tb{Extended Horn matrix.}
For any odd integer $n \ge 5$, Johnson and Reams~\cite[Section~4]{johnson2008constructing} define the matrix $Q_n \in \mathbb{R}^{n\times n}$ by
\[
(Q_n)_{ij} \;=\;
\begin{cases}
-1, & |i-j|=1 \ \text{or}\ \{i,j\}=\{1,n\},\\[0.2em]
1, & \text{otherwise},
\end{cases}
\]
which is a copositive example with known optimal value $v^* = 0$, attained by any $x = (e_i+e_j)/2$, $1 \le i < j \le n$. When $n=5$, $Q_5$ coincides with the classical Horn matrix.
While many of the tested random instances are tight at first- or second-order relaxation, the extended Horn matrix remains a nontrivial case requiring higher-order relaxations to close the gap. It thus serves as a meaningful benchmark for assessing the strength of different SDP relaxations.
For this test, we set $n=21$ and solve each relaxation at orders $\tau=1,2$; in addition, we test GloptiPoly3 for the Mom–SOS relaxation as a benchmark.
The resulting gaps and runtimes are reported in Table~\ref{tab-tightness-horn}.

{
\begin{table}[h!]
  \centering
  \renewcommand{\arraystretch}{1.25}
  \small 
  \begin{tabular}{|c|rr|rr|}
    \hline
    & \multicolumn{2}{c|}{1st‑order} 
    & \multicolumn{2}{c|}{2nd‑order}  \\
    \cline{2-5}
    Relaxation 
      & Gap (\%) & Time (s) 
      & Gap (\%) & Time (s)  \\
    \hline
    Poly–SDP         
      & $\infty$      & - 
      & $\infty$      & -        \\
    Poly–SDP–RLT    
      & $0.562$      & 0.3
      & $\phantom{0}0.280$ & \phantom{0}4.6 \\
      Mom–SOS (RPOP)       
      & $\infty$      & - 
      & $1.049$       & 1875.4\\
    Mom–SOS (Gloptipoly)         
      & $\infty$      & - 
      & $1.046$       & 6394.0\\
    Poly–Mom–SOS     
      & $0.562$      & 0.3
      & $\phantom{0}0.000$ & \phantom{0}245.7  \\
    \hline
  \end{tabular}
  \caption{Relative gaps and run times for the extended Horn instance ($n=21$).}
  \label{tab-tightness-horn}
\end{table}
}

\medskip

The numerical results from both the random and extended Horn instances reveal the following key insights into the relaxation quality and computational performance of different SDP-based relaxations:
\begin{enumerate}
    \item \tb{RLT and nonnegativity constraints enhance relaxation strength.} 
    Without RLT or nonnegativity constraints, the Poly–SDP and Mom–SOS relaxations remain unbounded at low orders, as evidenced by both the random and extended Horn instances. Incorporating RLT constraints alone (Poly–SDP–RLT) produces finite bounds — it is exact at first order on random matrices and reduces the extended Horn matrix's gap to 0.280\% at second order. Even more dramatically, adding nonnegativity constraint to the second order Mom–SOS relaxation (Poly–Mom–SOS) yields an exact bound for the extended Horn example. 
    In contrast, the standard second order Mom-SOS relaxation only achieves
    a gap of 1.046\%. 
    
    The above observations show that RLT and nonnegativity constraints complement moment information, closing gaps at lower orders and enhancing both tightness and efficiency. This performance advantage motivates our focus on the Poly–SDP–RLT relaxation in subsequent sections.
    \item \tb{Poly–SDP–RLT balances relaxation quality and computational cost.} 
    Among the relaxations tested, Poly–SDP–RLT consistently achieves tight bounds at low computational cost. 
    Remarkably, for the more challenging extended Horn matrix, the first-order Poly–SDP–RLT relaxation can produce a tighter bound than the second-order Mom–SOS relaxation, with a speedup of about 
$20,000$ times.
On random instances, Poly–SDP–RLT is exact already at first order. 
In contrast, Mom–SOS must resort to higher‐order moment matrices and incurs substantially longer runtimes. For instance, as shown in Figure~\ref{fig-StQP-random}, at $n=20$, RPOP solves the first‐order Poly–SDP–RLT relaxation about $100{,}000$ times faster than GloptiPoly for solving the second‐order Mom–SOS relaxation. This dramatic speedup highlights the efficiency gains from polyhedral strengthening, making Poly–SDP–RLT a scalable and practical alternative to high‐order moment–SOS relaxations.
    \item \tb{RPOP outperforms GloptiPoly in both efficiency and scalability.} 
    Although originally designed for Poly-SDP–RLT, RPOP extends naturally to moment–SOS problems as we have shown in \ref{sec-extension}, 
    and it consistently outperforms GloptiPoly across nearly all tested sizes. For example, on random instances with $n=20$, RPOP solves the second‐order Mom–SOS relaxation approximately $100$ times faster than GloptiPoly (Figure~\ref{fig-StQP-random}). This performance gap widens with increasing $n$, reflecting the superior scalability of RPOP’s low‐rank augmented Lagrangian approach compared to interior‐point methods.
\end{enumerate}

\subsection{Standard quadratic programming}\label{subsec-StQP}

Consider the following StQP problem mentioned in Subsection~\ref{subsec-tightness}:
\begin{equation}\label{eq-STQP}
\min\left\{x^\top Q x :\  e^\top x = 1,\ x\in\R^n_+\right\}.\tag{StQP}
\end{equation} 
To obtain a tractable convex approximation of \eqref{eq-STQP}, we employ the Poly-SDP-RLT relaxation as detailed in Subsection~\ref{subsec-tightness}.
We test on two classes of instances:
\begin{enumerate}
  \item \textbf{Random Gaussian.}  For \(n\in\{1000,2000,4000,6000\}\), each entry of \(Q_{ij}\) is sampled independently from the standard normal distribution \(\mathcal{N}(0,1)\). Empirically, these matrices often admit extremely sparse solutions $x$, and the first‐order Poly‐SDP‐RLT relaxation typically yields the exact global optimum.
  \item \textbf{COP‐derived.}  For $n\in\{20,40,60,80\}$, we generate \(Q\) following the method in~\cite{bomze2024tighter}, which produces instances with a unique minimizer supported on exactly $n/2$ variables.
These problems are more challenging, and the first-order relaxation is generally inexact. We therefore apply the second-order Poly-SDP-RLT relaxation, which empirically yields significantly tighter bounds.
\end{enumerate}

The computational results are summarized in Tables~\ref{table:StQP1} and~\ref{table:StQP2}.

{\small
\begin{longtable}{@{}l l l r l r r@{}}
\caption{Computational results for StQP problems (relaxation order $\tau =1$).}\label{table:StQP1} \\
\toprule
{(n, N), m} & \multicolumn{1}{l}{Algorithm} & \multicolumn{1}{l}{Iteration} & \multicolumn{1}{c}{Rank} & \multicolumn{1}{c}{$R_{\max}$} & \multicolumn{1}{c}{Objective} & {Time} \\
\midrule
\endfirsthead
\toprule
{(n, N), m} & \multicolumn{1}{l}{Algorithm} & \multicolumn{1}{l}{Iteration} & \multicolumn{1}{c}{Rank} & \multicolumn{1}{c}{$R_{\max}$} & \multicolumn{1}{c}{Objective} & {Time} \\
\midrule
\endhead
\multirow[t]{2}{*}[0pt]{\parbox[t]{2cm}{(1000,1001)\\m=1002}} & RPOP & 25, 1238, 25 & 1 & 4.32e-07 & -4.2604198e-03 & 15.8 \\
 & SDPNAL+ & 136, 146, 4485 & 1 & 4.78e-07 & -4.1609498e-03 & 550.4 \\
\addlinespace
\multirow[t]{2}{*}[0pt]{\parbox[t]{2cm}{(2000,2001)\\m=2002}} & RPOP & 27, 1400, 27 & 2 & 8.66e-07 & -2.0202661e-03 & 100.2 \\
 & SDPNAL+ & 71, 155, 3397 & 4 & 4.28e-05$^{\dagger}$ & -2.0064321e-03 & 3601.3 \\
\addlinespace
\multirow[t]{2}{*}[0pt]{\parbox[t]{2cm}{(4000,4001)\\m=4002}} & RPOP & 29, 1500, 29 & 1 & 5.67e-07 & -1.1300123e-03 & 957.6 \\
 & SDPNAL+ & 12, 30, 400 & 56 & 8.35e-05$^{\dagger}$ & -7.3488183e-06 & 3653.6 \\
\addlinespace
\multirow[t]{2}{*}[0pt]{\parbox[t]{2cm}{(6000,6001)\\m=6002}} & RPOP & 44, 2250, 44 & 2 & 1.99e-05$^{\dagger}$ & -8.3325597e-04 & 3628.8 \\
 & SDPNAL+ & 0, 0, 117 & 47 & 1.75e-03$^{\dagger}$ & -2.7447386e-05 & 3653.8 \\
\addlinespace
\bottomrule
\end{longtable}
}

As shown in Table~\ref{table:StQP1}, RPOP successfully solves all random Gaussian instances to the required accuracy, consistently producing low-rank solutions. On the largest instance $(n=6000)$, although RPOP reaches the time limit, it achieves a KKT residual over 100 times smaller than SDPNAL+, whose solution is both inaccurate and of high rank. Across all tested sizes, RPOP is up to 35 to 40 times faster than SDPNAL+.

{\small
\begin{longtable}{@{}l l l r l r r@{}}
\caption{Computational results for StQP problems (relaxation order $\tau=2$)).}\label{table:StQP2} \\
\toprule
{(n, N), m} & \multicolumn{1}{l}{Algorithm} & \multicolumn{1}{l}{Iteration} & \multicolumn{1}{c}{Rank} & \multicolumn{1}{c}{$R_{\max}$} & \multicolumn{1}{c}{Objective} & {Time} \\
\midrule
\endfirsthead
\toprule
{(n, N), m} & \multicolumn{1}{l}{Algorithm} & \multicolumn{1}{l}{Iteration} & \multicolumn{1}{c}{Rank} & \multicolumn{1}{c}{$R_{\max}$} & \multicolumn{1}{c}{Objective} & {Time} \\
\midrule
\endhead
\multirow[t]{2}{*}[0pt]{\parbox[t]{2cm}{(20,231)\\m=21022}} & RPOP & 23, 1200, 23 & 8 & 3.12e-07 & -4.4326119e-02 & 1.5 \\
 & SDPNAL+ & 0, 0, 338 & 11 & 9.43e-07 & -4.4340416e-02 & 4.2 \\
\addlinespace
\multirow[t]{2}{*}[0pt]{\parbox[t]{2cm}{(40,861)\\m=270642}} & RPOP & 27, 1400, 27 & 8 & 8.07e-07 & -4.7130297e-02 & 16.5 \\
 & SDPNAL+ & 0, 0, 422 & 12 & 9.48e-07 & -4.7150853e-02 & 65.5 \\
\addlinespace
\multirow[t]{2}{*}[0pt]{\parbox[t]{2cm}{(60,1891)\\m=1268862}} & RPOP & 24, 1250, 24 & 8 & 4.47e-07 & -4.8533814e-02 & 111.8 \\
 & SDPNAL+ & 0, 0, 491 & 12 & 9.51e-07 & -4.8557752e-02 & 475.7 \\
\addlinespace
\multirow[t]{2}{*}[0pt]{\parbox[t]{2cm}{(80,3321)\\m=3855682}} & RPOP & 24, 1250, 24 & 8 & 4.14e-07 & -4.8304554e-02 & 623.4 \\
 & SDPNAL+ & 21, 39, 432 & 21 & 2.67e-05$^{\dagger}$ & -4.9092914e-02 & 3604.2 \\
\addlinespace
\bottomrule
\end{longtable}
}

Table~\ref{table:StQP2} presents results for the more challenging COP-derived instances using the second-order Poly-SDP-RLT relaxation. For problem sizes up to $n = 60$, where the lifted SDP already involves tens to hundreds of thousands of constraints due to the RLT and nonnegativity terms, both solvers reach the required accuracy. However, RPOP consistently yields lower-rank solutions and runs 3 to 4 times faster than SDPNAL+. On the largest instance ($n = 80$), which leads to over 3.8 million linear equality and 11 million nonnegativity constraints, SDPNAL+ fails to reach the required accuracy within the time limit. In contrast, RPOP achieves the required accuracy within 10 minutes, over 6 times faster than SDPNAL+.

\subsection{Binary quadratic programs}
Consider the following binary quadratic program:
\begin{equation}\label{eq-BIQ-org}
\min\left\{x^\top Q x+c^\top x \mid x\in\{0,1\}^n\right\},
\end{equation}
where $Q\in\S^n$, $c\in \R^n$. 
This problem can be equivalently reformulated as a polynomial optimization problem in the standard form of~\eqref{eq-POP} to enable SDP-based relaxation:
\begin{equation}\label{eq-BIQ}
\min\left\{x^\top Q x+c^\top x \mid  x_i^2-x_i=0,\; \forall i\in[n],\; x\in\R_+^n\right\}.\tag{BIQ}
\end{equation}
As demonstrated in \cite{RiNNAL}, the first‐order Poly‐SDP‐RLT relaxation of \eqref{eq-BIQ} fails to provide exact bounds. To address this, we apply the second-order Poly-SDP-RLT relaxation, which empirically yields exact bounds across all tested instances, thereby significantly enhancing the relaxation quality.  For each $n\in\{20,40,60\}$, we generate random instances of \eqref{eq-BIQ} by sampling each $Q_{ij}$ and $c_i$ independently from the standard normal distribution $\mathcal{N} (0,1)$. Each instance is then solved using both RPOP and SDPNAL+, and the results are summarized in Table~\ref{table:BIQ}.

{\small
\begin{longtable}{@{}l l l r l r r@{}}
\caption{Computational results for BIQ problems (relaxation order $\tau=2$).}\label{table:BIQ} \\
\toprule
{(n, N), m} & \multicolumn{1}{l}{Algorithm} & \multicolumn{1}{l}{Iteration} & \multicolumn{1}{c}{Rank} & \multicolumn{1}{c}{$R_{\max}$} & \multicolumn{1}{c}{Objective} & {Time} \\
\midrule
\endfirsthead
\toprule
{(n, N), m} & \multicolumn{1}{l}{Algorithm} & \multicolumn{1}{l}{Iteration} & \multicolumn{1}{c}{Rank} & \multicolumn{1}{c}{$R_{\max}$} & \multicolumn{1}{c}{Objective} & {Time} \\
\midrule
\endhead
\multirow[t]{2}{*}[0pt]{\parbox[t]{2cm}{(20,231)\\m=20791}} & RPOP & 36, 7332, 8 & 1 & 4.43e-07 & -4.3222829e+01 & 5.5 \\
 & SDPNAL+ & 85, 275, 2450 & 1 & 1.36e-07 & -4.3222841e+01 & 55.0 \\
\addlinespace
\multirow[t]{2}{*}[0pt]{\parbox[t]{2cm}{(40,861)\\m=269781}} & RPOP & 73, 14701, 15 & 1 & 1.83e-07 & -1.0323647e+02 & 92.2 \\
 & SDPNAL+ & 186, 1716, 10948 & 1 & 5.85e-07 & -1.0323635e+02 & 2377.0 \\
\addlinespace
\multirow[t]{2}{*}[0pt]{\parbox[t]{2cm}{(60,1891)\\m=1266971}} & RPOP & 60, 12200, 12 & 1 & 3.63e-07 & -1.7504234e+02 & 517.9 \\
 & SDPNAL+ & 28, 236, 1000 & 848 & 2.54e-03$^{\dagger}$ & -1.5375472e+02 & 3604.0 \\
\addlinespace
\bottomrule
\end{longtable}
}

As shown in Table~\ref{table:BIQ}, RPOP solves all instances to the required accuracy and consistently yields rank-one solutions, confirming the empirical tightness of the second-order Poly-SDP-RLT relaxation. In contrast, SDPNAL+ exhibits deteriorating performance as problem size increases. For the medium-scale instance ($n = 40$), RPOP is over 25 times faster. For the largest instance ($n = 60$), RPOP converges within 10 minutes with an accurate, rank-one solution, whereas SDPNAL+ fails to reach the required accuracy within the one-hour time limit and terminates with an 
approximate solution with poor KKT residual. These results highlight the robustness and efficiency of RPOP in producing low-rank, high-quality solutions for increasingly large-scale Poly-SDP-RLT relaxations of binary quadratic problems.

\subsection{Minimum bisection problem}
Consider the minimum bisection problem over undirected graphs with $n$ nodes, formulated as the following binary quadratic program:
\begin{equation}\label{eq-MBP}
\min\left\{x^\top L x :\  e^\top x = \frac{n}{2},\ x\in\{0,1\}^n\right\},\tag{MBP}
\end{equation}
where \(L\) is the Laplacian of a random Erdos–Renyi graph \(G(n,p)\) with edge probability \(p=0.5\), and the equality constraint enforces an equipartition of the vertices. 
For each $n \in \{ 30, 40, 50, 60\}$, we generate a single random instance and solve its second-order Poly-SDP-RLT relaxation, since the first-order relaxation fails to provide tight bounds. Table~\ref{table:MB} summarizes the performance of RPOP and SDPNAL+.

{\small
\begin{longtable}{@{}l l l r l r r@{}}
\caption{Computational results for MB problems (relaxation order 
$\tau=2$).}\label{table:MB} \\
\toprule
{(n, N), m} & \multicolumn{1}{l}{Algorithm} & \multicolumn{1}{l}{Iteration} & \multicolumn{1}{c}{Rank} & \multicolumn{1}{c}{$R_{\max}$} & \multicolumn{1}{c}{Objective} & {Time} \\
\midrule
\endfirsthead
\toprule
{(n, N), m} & \multicolumn{1}{l}{Algorithm} & \multicolumn{1}{l}{Iteration} & \multicolumn{1}{c}{Rank} & \multicolumn{1}{c}{$R_{\max}$} & \multicolumn{1}{c}{Objective} & {Time} \\
\midrule
\endhead
\multirow[t]{2}{*}[0pt]{\parbox[t]{2cm}{(30,496)\\m=107137}} & RPOP & 123, 6200, 25 & 2 & 9.38e-09 & 3.7600000e+02 & 17.6 \\
 & SDPNAL+ & 45, 235, 2900 & 2 & 4.18e-07 & 3.7599983e+02 & 365.4 \\
\addlinespace
\multirow[t]{2}{*}[0pt]{\parbox[t]{2cm}{(40,861)\\m=305082}} & RPOP & 32, 6463, 7 & 2 & 7.60e-07 & 6.3600021e+02 & 64.2 \\
 & SDPNAL+ & 82, 522, 5233 & 2 & 2.23e-07 & 6.3600002e+02 & 1878.5 \\
\addlinespace
\multirow[t]{2}{*}[0pt]{\parbox[t]{2cm}{(50,1326)\\m=697477}} & RPOP & 248, 49800, 50 & 5 & 1.22e-07 & 9.5600001e+02 & 1014.8 \\
 & SDPNAL+ & 44, 652, 2827 & 1316 & 1.24e-03$^{\dagger}$ & 9.6287415e+02 & 3600.6 \\
\addlinespace
\multirow[t]{2}{*}[0pt]{\parbox[t]{2cm}{(60,1891)\\m=1382322}} & RPOP & 385, 77200, 77 & 52 & 3.36e-04$^{\dagger}$ & 1.4479773e+03 & 3606.9 \\
 & SDPNAL+ & 25, 96, 777 & 1888 & 4.02e-03$^{\dagger}$ & 1.4615768e+03 & 3600.9 \\
\addlinespace
\bottomrule
\end{longtable}
}

As shown in Table~\ref{table:MB}, RPOP solves all instances up to $n = 50$ to the required accuracy, while SDPNAL+ fails to reach the target accuracy for $n = 50$, returning a high-rank solution with poor KKT residuals. For $n = 30$ and $n = 40$, RPOP is 20–30 times faster than SDPNAL+. At $n = 60$, neither solver meets the accuracy requirement within the time limit, but RPOP produces a lower-rank solution with a KKT residual that is an order of magnitude smaller than that of SDPNAL+.

\subsection{Multiple quadratic knapsack problem}
Consider the following multiple quadratic knapsack problem (MQKP) with $d$ knapsacks and $n$ items:
\begin{equation}\label{eq-MQKP}
\max\left\{x^\top Q\,x + c^\top x
\mid
Ax\leq b,\;
x\in\{0,1\}^n
\right\},
\end{equation}
where \(Q\in\mathbb{R}^{n\times n}\) is a symmetric profit matrix, \(c\in\mathbb{R}^n\) is a linear profit vector, \(A\in\mathbb{R}^{d\times n}\) has entries \(A_{ij}\) denoting the weight of item \(j\) in knapsack \(i\), and \(b\in\mathbb{R}^d\) is the capacity vector.  To convert the problem into the standard form \eqref{eq-POP}, we follow the strategy described in Subsection~\ref{subsec-ineq} by introducing nonnegative slack variables \(s\in\mathbb{R}^d_+\) for each capacity constraint, leading to the equivalent reformulation:
\begin{equation}\label{eq-MQKP-slack}
\max\Big\{x^\top Q\,x + c^\top x
\mid
Ax+s=b,\; 
x\in\{0,1\}^n,\; (x,s)\in\R^{n+d}_+
\Big\}.\tag{MQKP}
\end{equation}
We generate test instances using the method described by Gallo et al.~\cite{gallo1980quadratic}. For each problem size \(n\in\{10,20,30,40\}\) and knapsack number $d=\{3,6\}$, we let the probability of a nonzero quadratic interaction be \(p=0.25\) and for each \(i<j\) set
\[
Q_{jj}=0,\quad Q_{ij}=Q_{ji}=\begin{cases}
0,&\text{with probability }1-p,\\
\text{Uniform}\{1,\dots,100\},&\text{with probability }p.
\end{cases}
\]
We then draw each linear coefficient \(c_j\) uniformly from \([1,100]\) and each weight \(A_{ij}\) uniformly from \([1,50]\).  Finally, we set the capacity of knapsack \(i\) to
$b_i \;=\;\lceil 0.1\sum_{j=1}^n A_{ij}\rceil$.
Since the first-order Poly-SDP-RLT relaxation of \eqref{eq-MQKP-slack} is not tight, we solve the second-order relaxation using SDPNAL+ and RiNNAL-POP.  Numerical results are reported in Table~\ref{table:MQKP2}.

{\small
\begin{longtable}{@{}l l l r l r r@{}}
\caption{Computational results for MQKP problems (relaxation order 
$\tau=2$).}\label{table:MQKP2} \\
\toprule
{(n, N, d)}, m & \multicolumn{1}{l}{Algorithm} & \multicolumn{1}{l}{Iteration} & \multicolumn{1}{c}{Rank} & \multicolumn{1}{c}{$R_{\max}$} & \multicolumn{1}{c}{Objective} & {Time} \\
\midrule
\endfirsthead
\toprule
{(n, N, d)}, m & \multicolumn{1}{l}{Algorithm} & \multicolumn{1}{l}{Iteration} & \multicolumn{1}{c}{Rank} & \multicolumn{1}{c}{$R_{\max}$} & \multicolumn{1}{c}{Objective} & {Time} \\
\midrule
\endhead
\multirow[t]{2}{*}[0pt]{\parbox[t]{2cm}{(20,300,3)\\m=55201}} & RPOP & 28, 5800, 28 & 1 & 8.74e-08 & -2.1400018e+02 & 7.6 \\
 & SDPNAL+ & 18, 86, 500 & 1 & 1.25e-07 & -2.1400000e+02 & 88.3 \\
\addlinespace
\multirow[t]{2}{*}[0pt]{\parbox[t]{2cm}{(20,378,6)\\m=113023}} & RPOP & 26, 5400, 26 & 1 & 7.80e-07 & -1.6800001e+02 & 10.0 \\
 & SDPNAL+ & 12, 42, 350 & 1 & 8.63e-08 & -1.6800000e+02 & 101.3 \\
\addlinespace
\multirow[t]{2}{*}[0pt]{\parbox[t]{2cm}{(30,595,3)\\m=189806}} & RPOP & 34, 7000, 34 & 1 & 3.19e-07 & -7.8400063e+02 & 29.0 \\
 & SDPNAL+ & 34, 177, 2100 & 1 & 1.06e-07 & -7.8400003e+02 & 900.6 \\
\addlinespace
\multirow[t]{2}{*}[0pt]{\parbox[t]{2cm}{(30,703,6)\\m=333223}} & RPOP & 90, 18200, 90 & 2 & 4.54e-07 & -3.9600026e+02 & 114.8 \\
 & SDPNAL+ & 54, 250, 3700 & 1 & 1.28e-08 & -3.9600000e+02 & 2958.7 \\
\addlinespace
\multirow[t]{2}{*}[0pt]{\parbox[t]{2cm}{(40,990,3)\\m=482461}} & RPOP & 100, 20200, 100 & 4 & 8.16e-07 & -1.1970070e+03 & 223.4 \\
 & SDPNAL+ & 46, 252, 2900 & 12 & 5.96e-05$^{\dagger}$ & -1.1969960e+03 & 3639.2 \\
\addlinespace
\multirow[t]{2}{*}[0pt]{\parbox[t]{2cm}{(40,1128,6)\\m=769673}} & RPOP & 848, 169800, 848 & 237 & 2.43e-04$^{\dagger}$ & -9.0042934e+02 & 3603.1 \\
 & SDPNAL+ & 25, 113, 1300 & 469 & 1.45e-03$^{\dagger}$ & -9.2321049e+02 & 3708.0 \\
\addlinespace
\bottomrule
\end{longtable}
}

As shown in Table~\ref{table:MQKP2}, RPOP consistently solves all instances to the required accuracy, recovering rank-one or nearly rank-one solutions except the last instance. In contrast, SDPNAL+ experiences significant difficulty on the larger instances. For $n \leq 30$, both solvers achieve the required accuracy, but RPOP is 10–30 times faster. On the more challenging instances with $n=40$, the performance gap widens substantially. For example, for the instance with $d=3$, RPOP converges within 4 minutes, while SDPNAL+ terminates with a high-rank solution and fails to meet the required accuracy within the 1-hour time limit.

\subsection{Ball‐constrained quartic minimization}
Consider the following quartic minimization problem over the $(n-1)$‐dimensional Euclidean unit ball:
\begin{equation}\label{eq-QB}
\min\Big\{\langle c,[x]_4\rangle \mid \|x\|\leq 1,\; x\in\R^{n-1}\Big\},
\end{equation}
where $[x]_4$ is the vector of monomials in $x$ of degree up to 4, and $c \in \R^\ell$ is a given coefficient vector with $\ell:={\binom{n+3}{4}}$. To convert the problem into the standard form \eqref{eq-POP}, we follow the strategy described in Subsection~\ref{subsec-ineq} by introducing a squared slack variable $s\in\R$ and equivalently reformulate~\eqref{eq-QB} as a sphere-constrained problem:
\begin{equation}\label{eq-QB-slack}
\min\Big\{\langle c,[x]_4\rangle \mid \|x\|^2+s^2= 1,\; (x,s)\in\R^n\Big\}.\tag{BQM}
\end{equation}
For each $n\in\{20,40,60,80\}$, we generate random instances by sampling each coefficient $c_i\sim\mathcal N(0,1)$.  
For this special problem \eqref{eq-QB-slack} with a constraint over the unit sphere, the solver ManiSDP \cite{wang2023solving} can be used to solve the relaxation problems.
We solve the resulting second-order Poly-SDP-RLT relaxation of \eqref{eq-QB-slack} using SDPNAL+, ManiSDP, and RiNNAL-POP, and summarize the numerical results in Table~\ref{table:QB2}.

{\small
\begin{longtable}{@{}l l l r l r r@{}}
\caption{Computational results for BQM problems (relaxation 
order $\tau=2$).}\label{table:QB2} \\
\toprule
{(n, N), m} & \multicolumn{1}{l}{Algorithm} & \multicolumn{1}{l}{Iteration} & \multicolumn{1}{c}{Rank} & \multicolumn{1}{c}{$R_{\max}$} & \multicolumn{1}{c}{Objective} & {Time} \\
\midrule
\endfirsthead
\toprule
{(n, N), m} & \multicolumn{1}{l}{Algorithm} & \multicolumn{1}{l}{Iteration} & \multicolumn{1}{c}{Rank} & \multicolumn{1}{c}{$R_{\max}$} & \multicolumn{1}{c}{Objective} & {Time} \\
\midrule
\endhead
\multirow[t]{3}{*}[0pt]{\parbox[t]{2cm}{(20,231)\\m=16402}} & RPOP & 25, 520, 25 & 1 & 2.65e-07 & -1.2346539e+01 & 0.6 \\
 & SDPNAL+ & 11, 23, 300 & 1 & 3.37e-07 & -1.2346534e+01 & 3.5 \\
 & ManiSDP & 96 & 1 & 1.96e-07 & -1.2346549e+01 & 6.8 \\
\addlinespace
\multirow[t]{3}{*}[0pt]{\parbox[t]{2cm}{(40,861)\\m=236202}} & RPOP & 21, 440, 21 & 1 & 5.18e-07 & -2.0874267e+01 & 4.4 \\
 & SDPNAL+ & 6, 27, 623 & 2 & 9.80e-07 & -2.0874046e+01 & 72.3 \\
 & ManiSDP & 180 & 10 & 1.35e-02$^{\dagger}$ & -2.1770028e+01 & 93.2 \\
\addlinespace
\multirow[t]{3}{*}[0pt]{\parbox[t]{2cm}{(60,1891)\\m=1155402}} & RPOP & 44, 900, 44 & 1 & 7.76e-07 & -2.2240284e+01 & 67.1 \\
 & SDPNAL+ & 8, 30, 599 & 2 & 9.59e-07 & -2.2239502e+01 & 491.2 \\
 & ManiSDP & - & - & - & - & - \\
\addlinespace
\multirow[t]{3}{*}[0pt]{\parbox[t]{2cm}{(80,3321)\\m=3590002}} & RPOP & 51, 1040, 51 & 1 & 2.04e-07 & -2.7585001e+01 & 372.1 \\
 & SDPNAL+ & 8, 30, 588 & 2 & 9.54e-07 & -2.7583874e+01 & 3167.5 \\
 & ManiSDP & - & - & - & - & - \\
\addlinespace
\bottomrule
\end{longtable}
}

As shown in Table~\ref{table:QB2}, both RPOP and SDPNAL+ successfully solve all test instances to the required accuracy, consistently recovering rank-one or low-rank solutions. In contrast, ManiSDP fails to solve problems with $n\geq40$, due to early termination from its stagnation-based stopping criterion.
At $n=40$, RPOP is over 15 times faster than SDPNAL+, and more than 20 times faster than ManiSDP. For larger instances ($n=60, 80$), RPOP continues to outperform SDPNAL+ with speedups of approximately 7–9 times, while ManiSDP fails to converge. These results demonstrate the scalability and numerical stability of RPOP in solving Poly-SDP-RLT relaxations of dense quartic problems over high-dimensional unit ball.

\subsection{Kurtosis‐minimization portfolio problem}
Consider the kurtosis‐minimization portfolio model under fixed expectation and risk levels:
\begin{equation}\label{eq-KM}
\min\Bigl\{\mathbb{E}_\xi\bigl[(x^\top (\xi - \mu))^4\bigr]\mid\mu^\top x = \mu_0,\;x^\top\Sigma x = \sigma_0^2,\;e^\top x = 1,\; x\ge 0\Bigr\},\tag{KMP}
\end{equation} 
where $\xi\in\R^n$ denotes the random asset‐return vector, $\mu=\mathbb{E}[\xi]\in\R^n$ its mean vector, $\Sigma=\mathrm{Cov}(\xi)\in\R^{n\times n}$ the covariance matrix, $e\in\R^n$ the all‐ones vector, and $x\in\R^n$ the decision variable representing long-only portfolio weights. The parameters \(\mu_0\) and \(\sigma_0^2\) denote the target expected return and variance, respectively. 

Problem~\eqref{eq-KM} corresponds to P3 in \cite{mhiri2010international}, and we generate synthetic data following the setup in \cite{hu2022globally}. For each \(n\in\{20,40,60,80\}\), we draw \(p=255\) independent asset-price samples $\xi_i\in\R^n$ uniformly from $(0,10)^n$. Letting \(\hat\mu\) and \(\hat\Sigma\) denote the empirical mean and covariance of \(\{\xi_i\}_{i=1}^p\), we set
\[
\mu_0 = \hat\mu^\top\Bigl(\frac{e}{n}\Bigl), 
\qquad
\sigma_0^2 = \Bigl(\frac{e}{n}\Bigr)^\top \hat\Sigma \Bigl(\frac{e}{n}\Bigr),
\]
so that the targets match those of the equally‐weighted portfolio \(x = e/n\). We solve the second-order Poly-SDP-RLT relaxation using RPOP and SDPNAL+. Table~\ref{table:KM2} summarizes the results.

{\small
\begin{longtable}{@{}l l l r l r r@{}}
\caption{Computational results for KM problems (relaxation order
$\tau=2$).}\label{table:KM2} \\
\toprule
{(n, N), m} & \multicolumn{1}{l}{Algorithm} & \multicolumn{1}{l}{Iteration} & \multicolumn{1}{c}{Rank} & \multicolumn{1}{c}{$R_{\max}$} & \multicolumn{1}{c}{Objective} & {Time} \\
\midrule
\endfirsthead
\toprule
{(n, N), m} & \multicolumn{1}{l}{Algorithm} & \multicolumn{1}{l}{Iteration} & \multicolumn{1}{c}{Rank} & \multicolumn{1}{c}{$R_{\max}$} & \multicolumn{1}{c}{Objective} & {Time} \\
\midrule
\endhead
\multirow[t]{2}{*}[0pt]{\parbox[t]{2cm}{(20,231)\\m=26104}} & RPOP & 14, 750, 14 & 1 & 4.81e-07 & 5.3904621e-01 & 0.7 \\
 & SDPNAL+ & 85, 405, 3337 & 1 & 9.66e-07 & 5.4569096e-01 & 194.5 \\
\addlinespace
\multirow[t]{2}{*}[0pt]{\parbox[t]{2cm}{(40,861)\\m=306804}} & RPOP & 41, 2100, 41 & 5 & 6.76e-07 & 8.0918582e-02 & 18.9 \\
 & SDPNAL+ & 77, 634, 3650 & 39 & 6.24e-06$^{\dagger}$ & 9.0307077e-02 & 3602.7 \\
\addlinespace
\multirow[t]{2}{*}[0pt]{\parbox[t]{2cm}{(60,1891)\\m=1386104}} & RPOP & 172, 8650, 172 & 56 & 9.98e-07 & 3.0082122e-02 & 464.3 \\
 & SDPNAL+ & 20, 67, 650 & 112 & 1.95e-04$^{\dagger}$ & 7.2863598e-02 & 3626.3 \\
\addlinespace
\multirow[t]{1}{*}[0pt]{\parbox[t]{2cm}{(80,3321)\\m=4128004}} & RPOP & 139, 7000, 139 & 74 & 9.97e-07 & 1.3900566e-02 & 1831.2 \\
 & SDPNAL+ & - & - & - & - & - \\
\addlinespace
\bottomrule
\end{longtable}
}

As shown in Table~\ref{table:KM2}, RPOP successfully solves all instances to the required accuracy, while SDPNAL+ fails to converge within the 1-hour time limit for instances with $n \geq 40$. At $n = 20$, RPOP is over 270 times faster than SDPNAL+. As the problem size increases, the performance gap widens significantly: for $n = 40$, RPOP converges within 20 seconds, while SDPNAL+ fails to converge within the 1-hour time limit; at $n = 60$, RPOP solves the problem in 8 minutes, whereas SDPNAL+ terminates with a large KKT residual. For $n = 80$, only RPOP is able to solve the instance, as SDPNAL+ encounters memory issues due to over 4 million equality constraints and more than 11 million nonnegativity constraints. The high efficiency of RPOP is partly attributed to the highly structured nature of the relaxation, which introduces numerous facial equality constraints that are effectively exploited in the design of our algorithm.

\subsection{Copositivity of symmetric tensors}
Let \(\B\in\operatorname{Sym}\left(\otimes^{\rmt}\R^{n}\right)\) be a symmetric tensor of order $\rmt$. The tensor \(\B\)  is said to be copositive if
\[
\bigl\langle
\B,\;\otimes^\rmt x
\bigr\rangle
\;\ge\;0
\quad\text{for all }x\in\R^{n}_{+}.
\]
When \(\rmt=2\), this definition reduces to the classical notion of matrix copositivity. Testing tensor copositivity for general \(\rmt\) is co‐NP‐hard \cite{dickinson2014computational,murty1985some}, and can be reformulated as the polynomial optimization problem
\begin{equation}\label{eq-CST}
\min\left\{ \langle \B, \otimes^\rmt x\rangle \mid  e^\top x=1,\; x\geq 0\right\}.\tag{CST}
\end{equation}
The tensor $\B$ is copositive if and only if the optimal value of \eqref{eq-CST} is nonnegative.
We consider two types of test instances constructed according to the method described in \cite{qi2013symmetric}:
\begin{enumerate}
    \item \textbf{Random symmetric tensors}. Each entry of \(\B\) is sampled independently from the uniform distribution on \([-1,1]\), and full symmetry is enforced by averaging over index permutations.
    \item \textbf{Guaranteed-copositive tensors}. 
    A sufficient condition for copositivity is that
    \[
    \B_{i\,i\cdots i}\;\ge\;
    -\sum\bigl\{\,\B_{i\,i_2\cdots i_\rmt} :
    (i,i_2,\dots,i_\rmt)\neq(i,i,\dots,i)\text{ and }\B_{i\,i_2\cdots i_\rmt}<0\bigr\}
    \quad\forall\,i=1,\dots,n.
    \]
    To construct such tensors, we first generate a random symmetric tensor as above. Then, for each \(i=1,\dots,n\), we set
    \[
    \B_{ii\cdots i}
    \;=\;
    10^{-6}
    \;-\;
    \sum\bigl\{\,\B_{ii_2\cdots i_\rmt} :
    (i,i_2,\dots,i_\rmt)\neq(i,i,\dots,i)\text{ and }\B_{i\,i_2\cdots i_\rmt}<0\bigr\},
    \]
    which ensures that the copositivity condition is satisfied and $\B$ is copositive by construction.
\end{enumerate}
For matrix instances ($\rmt=2$), the first‐order Poly‐SDP‐RLT relaxation is already tight. Thus, we test the first-order relaxation with $n\in\{1000,2000,4000,6000\}$. For the fourth‐order tensor instances ($\rmt=4$), the relaxation order must be at least 2 to capture the degree of the polynomial. Accordingly, we apply the second‐order Poly‐SDP‐RLT relaxation with $n\in\{20,40,60\}$. The numerical results are summarized in Tables~\ref{table:CST2} and~\ref{table:CST4}. In these tables, DT denotes the data type.

{\small
\begin{longtable}{@{}l l l l r l r r r@{}}
\caption{Computational results for CST problems with $\rmt=2$ (relaxation order $\tau = 1$).}\label{table:CST2} \\
\toprule
DT & {(n, N), m} & \multicolumn{1}{l}{Algorithm} & \multicolumn{1}{l}{Iteration} & \multicolumn{1}{c}{Rank} & \multicolumn{1}{c}{$R_{\max}$} & \multicolumn{1}{c}{Objective} & {Time} \\
\midrule
\endfirsthead
\toprule
DT & {(n, N), m} & \multicolumn{1}{l}{Algorithm} & \multicolumn{1}{l}{Iteration} & \multicolumn{1}{c}{Rank} & \multicolumn{1}{c}{$R_{\max}$} & \multicolumn{1}{c}{Objective} & {Time} \\
\midrule
\endhead
\multirow[t]{2}{*}[0pt]{1} & \multirow[t]{2}{*}[0pt]{\parbox[t]{2cm}{(1000,1001)\\m=1002}} & RPOP & 116, 5843, 116 & 1 & 7.96e-07 & -9.9221196e-01 & 38.2 \\
 & & SDPNAL+ & 227, 247, 5330 & 2 & 1.66e-06$^{\dagger}$ & -9.9184447e-01 & 702.1 \\
\addlinespace
\multirow[t]{2}{*}[0pt]{1} & \multirow[t]{2}{*}[0pt]{\parbox[t]{2cm}{(2000,2001)\\m=2002}} & RPOP & 136, 6850, 136 & 4 & 8.86e-07 & -9.9850085e-01 & 300.2 \\
 & & SDPNAL+ & 130, 201, 3307 & 19 & 1.86e-05$^{\dagger}$ & -9.9420320e-01 & 3601.3 \\
\addlinespace
\multirow[t]{2}{*}[0pt]{2} & \multirow[t]{2}{*}[0pt]{\parbox[t]{2cm}{(1000,1001)\\m=1002}} & RPOP & 2, 80, 2 & 1 & 1.74e-14 & 1.6615831e-01 & 0.9 \\
 & & SDPNAL+ & 45, 45, 2307 & 1 & 9.10e-07 & 1.6799439e-01 & 221.9 \\
\addlinespace
\multirow[t]{2}{*}[0pt]{2} & \multirow[t]{2}{*}[0pt]{\parbox[t]{2cm}{(2000,2001)\\m=2002}} & RPOP & 2, 80, 2 & 1 & 5.88e-15 & 1.6655448e-01 & 4.2 \\
 & & SDPNAL+ & 105, 108, 2723 & 1 & 8.54e-07 & 1.7012510e-01 & 2107.7 \\
\addlinespace
\multirow[t]{2}{*}[0pt]{2} & \multirow[t]{2}{*}[0pt]{\parbox[t]{2cm}{(4000,4001)\\m=4002}} & RPOP & 2, 84, 2 & 1 & 2.03e-14 & 1.6643175e-01 & 30.9 \\
 & & SDPNAL+ & 20, 20, 458 & 797 & 1.21e-05$^{\dagger}$ & 2.7370011e+00 & 3605.1 \\
\addlinespace
\multirow[t]{2}{*}[0pt]{2} & \multirow[t]{2}{*}[0pt]{\parbox[t]{2cm}{(6000,6001)\\m=6002}} & RPOP & 1, 77, 1 & 1 & 7.17e-07 & 1.6667974e-01 & 68.1 \\
 & & SDPNAL+ & 0, 0, 153 & 1 & 5.39e-06$^{\dagger}$ & 5.0527070e-01 & 3613.0 \\
\addlinespace
\bottomrule
\end{longtable}
}

As shown in Table~\ref{table:CST2}, RPOP successfully solves all matrix copositivity instances using the first-order Poly-SDP-RLT relaxation, whereas SDPNAL+ is unable to solve several large-scale or difficult instances within the 1-hour time limit, particularly for data type 2. For these problems, RPOP achieves up to 500 times speedups and consistently delivers low-rank solutions.

{\small
\begin{longtable}{@{}l l l l r l r r r@{}}
\caption{Computational results for CST problems with $\rmt=4$ (relaxation order $\tau=2$).}\label{table:CST4} \\
\toprule
DT & {(n, N), m} & \multicolumn{1}{l}{Algorithm} & \multicolumn{1}{l}{Iteration} & \multicolumn{1}{c}{Rank} & \multicolumn{1}{c}{$R_{\max}$} & \multicolumn{1}{c}{Objective} & {Time} \\
\midrule
\endfirsthead
\toprule
DT & {(n, N), m} & \multicolumn{1}{l}{Algorithm} & \multicolumn{1}{l}{Iteration} & \multicolumn{1}{c}{Rank} & \multicolumn{1}{c}{$R_{\max}$} & \multicolumn{1}{c}{Objective} & {Time} \\
\midrule
\endhead
\multirow[t]{2}{*}[0pt]{1} & \multirow[t]{2}{*}[0pt]{\parbox[t]{2cm}{(20,231)\\m=21022}} & RPOP & 14, 750, 14 & 2 & 6.42e-07 & -9.4311726e-01 & 0.7 \\
 & & SDPNAL+ & 15, 17, 830 & 1 & 8.50e-07 & -9.4292047e-01 & 9.3 \\
\addlinespace
\multirow[t]{2}{*}[0pt]{1} & \multirow[t]{2}{*}[0pt]{\parbox[t]{2cm}{(40,861)\\m=270642}} & RPOP & 30, 1549, 30 & 1 & 1.60e-07 & -9.6583382e-01 & 17.6 \\
 & & SDPNAL+ & 90, 116, 2694 & 3 & 8.68e-07 & -9.6482563e-01 & 399.8 \\
\addlinespace
\multirow[t]{2}{*}[0pt]{1} & \multirow[t]{2}{*}[0pt]{\parbox[t]{2cm}{(60,1891)\\m=1268862}} & RPOP & 54, 2733, 54 & 1 & 3.75e-07 & -9.9430793e-01 & 217.5 \\
 & & SDPNAL+ & 50, 99, 3226 & 52 & 1.80e-05$^{\dagger}$ & -9.9445335e-01 & 3601.1 \\
\addlinespace
\multirow[t]{2}{*}[0pt]{2} & \multirow[t]{2}{*}[0pt]{\parbox[t]{2cm}{(20,231)\\m=21022}} & RPOP & 18, 950, 18 & 1 & 3.35e-07 & 5.3540264e-02 & 0.9 \\
 & & SDPNAL+ & 83, 599, 3477 & 1 & 1.00e-06 & 5.3809530e-02 & 156.3 \\
\addlinespace
\multirow[t]{2}{*}[0pt]{2} & \multirow[t]{2}{*}[0pt]{\parbox[t]{2cm}{(40,861)\\m=270642}} & RPOP & 19, 1000, 19 & 1 & 7.13e-07 & 5.0214195e-02 & 9.4 \\
 & & SDPNAL+ & 31, 172, 1001 & 1 & 8.63e-07 & 5.0348184e-02 & 514.5 \\
\addlinespace
\multirow[t]{2}{*}[0pt]{2} & \multirow[t]{2}{*}[0pt]{\parbox[t]{2cm}{(60,1891)\\m=1268862}} & RPOP & 21, 1100, 21 & 1 & 5.27e-07 & 4.8896302e-02 & 71.2 \\
 & & SDPNAL+ & 18, 103, 322 & 1213 & 9.89e-07 & 1.9684112e+00 & 2185.9 \\
\addlinespace
\bottomrule
\end{longtable}
}

In the tensor setting (Table~\ref{table:CST4}), RPOP again demonstrates clear superiority. For data type 1, it is typically 10–20 times faster than SDPNAL+, and for data type 2, the speedup reaches 30–150 times. Notably, even when both solvers satisfy the stopping criteria, SDPNAL+ may return objective values with noticeable deviation (e.g., $n=60$) due to looser complementarity tolerances. With a smaller stopping tolerance, SDPNAL+ would converge to the same value as RPOP. These results highlight RPOP’s efficiency and robustness on copositivity verification tasks for both matrix and higher-order tensor cases.

\subsection{Nonnegative tensor factorization}

We consider the best nonnegative rank-one approximation of a given tensor. Let
\begin{equation*}
    \B\in \operatorname{Sym}\left(\otimes^{\alpha_1}\R^{n_1}\right)
\;\otimes\;\cdots\;\otimes\;
\operatorname{Sym}\left(\otimes^{\alpha_p}\R^{n_p}\right)
\end{equation*}
be a partially symmetric tensor of order \(\sum_{i=1}^p\alpha_i\). The best nonnegative rank-one tensor approximation problem is
\begin{equation}\label{eq-NTF}
\min\left\{ \|\B-\lambda x^{\otimes \alpha}\|^2 \mid \ \|x^{(i)}\|=1,\; \lambda\geq 0,\; x^{(i)}\geq 0,\; \forall i\in[p]\right\},
\end{equation}
where each \(x^{(i)}\in\R_+^{n_i}\) has unit Euclidean norm, \(\lambda\ge0\) is a scaling factor, and
\[
(x^{(i)})^{\otimes\alpha_i}
:=\underbrace{x^{(i)}\otimes x^{(i)}\otimes\cdots\otimes x^{(i)}}_{\alpha_i\ \text{copies}},
\]
so that 
\(\;x^{\otimes\alpha}=(x^{(1)})^{\otimes\alpha_1}\otimes\cdots\otimes(x^{(p)})^{\otimes\alpha_p}\)
is the resulting rank-one tensor.  Here \(\|\cdot\|\) denotes the Frobenius norm, i.e.\ 
\(\|\cT\|^2=\sum_{i_1,\dots,i_p}\cT_{i_1\cdots i_p}^2\).
Eliminating the scalar \(\lambda\) yields the equivalent homogeneous formulation \cite{hu2019best}:
\begin{equation}\label{eq-NTF-reduced}
\min\left\{ \langle -\B, x^{\otimes \alpha}\rangle \mid \ \|x^{(i)}\|=1,\; x^{(i)}\geq 0,\; \forall i\in[p]\right\}.\tag{NTF}
\end{equation}
We test two classes of tensors that yield homogeneous polynomial objectives of degree \(\rmt\) in the entries of \(x^{(i)}\):
\begin{enumerate}
   \item \textbf{General Tensors (NTF).}  Set \(p=\rmt\), \(\alpha_i=1\), and \(n_i=n\) so that \(\cA\in\otimes^\rmt\R^n\).  The entries, given in \cite[Example 3.16]{nie2014semidefinite}, are defined as
  \[
    \B_{i_1\cdots i_\rmt}
      =\sum_{j=1}^\rmt (-1)^{j+1}\,j\,e^{-\,i_j}.
  \]
  \item \textbf{Symmetric Tensors (NSTF).} Set \(p=1\), \(\alpha_1=\rmt\), \(n_1=n\), so that \(\B\in\operatorname{Sym}(\otimes^\rmt\R^n)\). We test three types of symmetric entries:
  \begin{align}
    \B_{i_1\cdots i_\rmt}
      &=\sum_{j=1}^\rmt \frac{(-1)^{i_j}}{i_j}, 
    \tag{NSTF-1}\label{eq:DT1}\\
    \B_{i_1\cdots i_\rmt}
      &=\sum_{j=1}^\rmt \arctan\left(\frac{(-1)^{i_j}i_j}{n}\right),
    \tag{NSTF-2}\label{eq:DT2}\\
    \B_{i_1\cdots i_\rmt}
      &=\sum_{j=1}^\rmt (-1)^{i_j}\,\log(i_j).
    \tag{NSTF-3}\label{eq:DT3}
  \end{align}
  These examples are taken from \cite[Examples 3.5–3.7]{nie2014semidefinite}.
\end{enumerate}

We test tensors with order \(\rmt\in\{3,4\}\) and apply the second-order Poly‐SDP‐RLT relaxation to compute lower bounds. Numerical results for the NTF instances are reported in Tables~\ref{table:NTF-3} and~\ref{table:NTF-4}, while results for the NSTF instances are shown in Tables~\ref{table:NSTF-3} and~\ref{table:NSTF-4}.

{\small
\begin{longtable}{@{}l l l r l r r@{}}
\caption{Computational results for NTF problems with $\rmt=3$ (relaxation order $\tau=2$).}\label{table:NTF-3} \\
\toprule
{(n, N), m} & \multicolumn{1}{l}{Algorithm} & \multicolumn{1}{l}{Iteration} & \multicolumn{1}{c}{Rank} & \multicolumn{1}{c}{$R_{\max}$} & \multicolumn{1}{c}{Objective} & {Time} \\
\midrule
\endfirsthead
\toprule
{(n, N), m} & \multicolumn{1}{l}{Algorithm} & \multicolumn{1}{l}{Iteration} & \multicolumn{1}{c}{Rank} & \multicolumn{1}{c}{$R_{\max}$} & \multicolumn{1}{c}{Objective} & {Time} \\
\midrule
\endhead
\multirow[t]{2}{*}[0pt]{\parbox[t]{2cm}{(10,496)\\m=78369}} & RPOP & 25, 1300, 25 & 4 & 6.70e-07 & -1.1611106e+01 & 3.4 \\
 & SDPNAL+ & 2, 2, 353 & 4 & 8.65e-07 & -1.1611100e+01 & 9.1 \\
\addlinespace
\multirow[t]{2}{*}[0pt]{\parbox[t]{2cm}{(15,1081)\\m=376189}} & RPOP & 28, 1450, 28 & 4 & 7.97e-08 & -1.7610440e+01 & 20.8 \\
 & SDPNAL+ & 11, 11, 694 & 7 & 4.41e-07 & -1.7610440e+01 & 87.4 \\
\addlinespace
\multirow[t]{2}{*}[0pt]{\parbox[t]{2cm}{(20,1891)\\m=1159184}} & RPOP & 27, 1400, 27 & 4 & 5.39e-07 & -2.3579847e+01 & 74.4 \\
 & SDPNAL+ & 0, 0, 407 & 6 & 9.36e-07 & -2.3579808e+01 & 176.6 \\
\addlinespace
\multirow[t]{2}{*}[0pt]{\parbox[t]{2cm}{(25,2926)\\m=2788479}} & RPOP & 31, 1600, 31 & 4 & 1.30e-07 & -2.9535838e+01 & 362.0 \\
 & SDPNAL+ & 11, 11, 651 & 9 & 9.33e-07 & -2.9535838e+01 & 1472.6 \\
\addlinespace
\bottomrule
\end{longtable}
}

As shown in Table~\ref{table:NTF-3}, both RPOP and SDPNAL+ successfully solve all second-order relaxation problems to the required accuracy. However, RPOP is consistently 3–6 times faster than SDPNAL+.

{\small
\begin{longtable}{@{}l l l r l r r@{}}
\caption{Computational results for NTF problems with $\rmt=4$ (relaxation order $\tau=2$).}\label{table:NTF-4} \\
\toprule
{(n, N), m} & \multicolumn{1}{l}{Algorithm} & \multicolumn{1}{l}{Iteration} & \multicolumn{1}{c}{Rank} & \multicolumn{1}{c}{$R_{\max}$} & \multicolumn{1}{c}{Objective} & {Time} \\
\midrule
\endfirsthead
\toprule
{(n, N), m} & \multicolumn{1}{l}{Algorithm} & \multicolumn{1}{l}{Iteration} & \multicolumn{1}{c}{Rank} & \multicolumn{1}{c}{$R_{\max}$} & \multicolumn{1}{c}{Objective} & {Time} \\
\midrule
\endhead
\multirow[t]{2}{*}[0pt]{\parbox[t]{2cm}{(10,861)\\m=238785}} & RPOP & 28, 1450, 28 & 8 & 6.40e-07 & -3.1637169e+01 & 12.6 \\
 & SDPNAL+ & 11, 11, 644 & 8 & 9.78e-07 & -3.1637174e+01 & 50.3 \\
\addlinespace
\multirow[t]{2}{*}[0pt]{\parbox[t]{2cm}{(15,1891)\\m=1161075}} & RPOP & 32, 1650, 32 & 8 & 5.62e-07 & -6.1949211e+01 & 88.6 \\
 & SDPNAL+ & 13, 13, 1298 & 17 & 1.58e-06$^{\dagger}$ & -6.1949379e+01 & 586.2 \\
\addlinespace
\multirow[t]{2}{*}[0pt]{\parbox[t]{2cm}{(20,3321)\\m=3599965}} & RPOP & 42, 2150, 42 & 8 & 4.28e-07 & -9.8202747e+01 & 632.5 \\
 & SDPNAL+ & 0, 0, 618 & 12 & 9.55e-07 & -9.8202557e+01 & 1566.0 \\
\addlinespace
\bottomrule
\end{longtable}
}

As shown in Table~\ref{table:NTF-4}, both RPOP and SDPNAL+ successfully solve all the instances. However, RPOP consistently achieves a 4–7 times speedup over SDPNAL+.

{\small
\begin{longtable}{@{}l l l l r r r r r@{}}
\caption{Computational results for NSTF problems with $\rmt=3$
(relaxation order $\tau=2$).}\label{table:NSTF-3} \\
\toprule
NSTF & {(n, N), m} & \multicolumn{1}{l}{Algorithm} & \multicolumn{1}{l}{Iteration} & \multicolumn{1}{c}{Rank} & \multicolumn{1}{c}{$R_{\max}$} & \multicolumn{1}{c}{Objective} & {Time} \\
\midrule
\endfirsthead
\toprule
NSTF & {(n, N), m} & \multicolumn{1}{l}{Algorithm} & \multicolumn{1}{l}{Iteration} & \multicolumn{1}{c}{Rank} & \multicolumn{1}{c}{$R_{\max}$} & \multicolumn{1}{c}{Objective} & {Time} \\
\midrule
\endhead
\multirow[t]{2}{*}[0pt]{1} & \multirow[t]{2}{*}[0pt]{\parbox[t]{2cm}{(40,861)\\m=236202}} & RPOP & 19, 1000, 19 & 1 & 2.68e-07 & -3.8016921e+01 & 9.0 \\
 & & SDPNAL+ & 0, 0, 331 & 4 & 9.75e-07 & -3.8016959e+01 & 25.0 \\
\addlinespace
\multirow[t]{2}{*}[0pt]{1} & \multirow[t]{2}{*}[0pt]{\parbox[t]{2cm}{(60,1891)\\m=1155402}} & RPOP & 19, 1000, 19 & 1 & 8.35e-07 & -5.6213703e+01 & 54.7 \\
 & & SDPNAL+ & 1, 1, 479 & 1 & 5.47e-07 & -5.6214490e+01 & 207.0 \\
\addlinespace
\multirow[t]{2}{*}[0pt]{1} & \multirow[t]{2}{*}[0pt]{\parbox[t]{2cm}{(80,3321)\\m=3590002}} & RPOP & 29, 1500, 29 & 2 & 9.83e-07 & -7.4060045e+01 & 455.1 \\
 & & SDPNAL+ & 15, 20, 1000 & 1 & 9.16e-07 & -7.4060013e+01 & 3403.3 \\
\addlinespace
\multirow[t]{2}{*}[0pt]{2} & \multirow[t]{2}{*}[0pt]{\parbox[t]{2cm}{(40,861)\\m=236202}} & RPOP & 48, 2450, 48 & 2 & 3.94e-07 & -1.5094742e+02 & 20.9 \\
 & & SDPNAL+ & 22, 25, 650 & 1 & 9.58e-07 & -1.5094484e+02 & 76.8 \\
\addlinespace
\multirow[t]{2}{*}[0pt]{2} & \multirow[t]{2}{*}[0pt]{\parbox[t]{2cm}{(60,1891)\\m=1155402}} & RPOP & 66, 3350, 66 & 1 & 8.90e-07 & -2.7343951e+02 & 179.8 \\
 & & SDPNAL+ & 23, 29, 650 & 1 & 9.82e-07 & -2.7342993e+02 & 442.9 \\
\addlinespace
\multirow[t]{2}{*}[0pt]{2} & \multirow[t]{2}{*}[0pt]{\parbox[t]{2cm}{(80,3321)\\m=3590002}} & RPOP & 61, 3100, 61 & 1 & 7.98e-07 & -4.1800259e+02 & 931.1 \\
 & & SDPNAL+ & 25, 36, 550 & 1 & 8.59e-07 & -4.1796967e+02 & 2431.9 \\
\addlinespace
\multirow[t]{2}{*}[0pt]{3} & \multirow[t]{2}{*}[0pt]{\parbox[t]{2cm}{(40,861)\\m=236202}} & RPOP & 46, 2350, 46 & 1 & 2.56e-08 & -8.6565039e+02 & 20.2 \\
 & & SDPNAL+ & 21, 21, 650 & 1 & 5.78e-07 & -8.6564524e+02 & 68.8 \\
\addlinespace
\multirow[t]{2}{*}[0pt]{3} & \multirow[t]{2}{*}[0pt]{\parbox[t]{2cm}{(60,1891)\\m=1155402}} & RPOP & 72, 3650, 72 & 1 & 2.16e-08 & -1.7827863e+03 & 196.3 \\
 & & SDPNAL+ & 22, 24, 650 & 1 & 7.91e-07 & -1.7827613e+03 & 398.5 \\
\addlinespace
\multirow[t]{2}{*}[0pt]{3} & \multirow[t]{2}{*}[0pt]{\parbox[t]{2cm}{(80,3321)\\m=3590002}} & RPOP & 47, 2400, 47 & 1 & 2.57e-08 & -2.9604567e+03 & 720.0 \\
 & & SDPNAL+ & 22, 27, 550 & 1 & 6.27e-07 & -2.9604002e+03 & 2329.4 \\
\addlinespace
\bottomrule
\end{longtable}
}

Table~\ref{table:NSTF-3} demonstrates that both RPOP and SDPNAL+ solve all 
second-order relaxation problems to the required accuracy, producing rank-one or nearly rank-one solutions. RPOP outperforms SDPNAL+ across all test cases, offering 3–5 times speedup on moderate-size instances and up to 7 times on the largest.

{\small
\begin{longtable}{@{}l l l l r r r r r@{}}
\caption{Computational results for NSTF problems with $\rmt=4$
(relaxation order $\tau=2$).}\label{table:NSTF-4} \\
\toprule
NSTF & {(n, N), m} & \multicolumn{1}{l}{Algorithm} & \multicolumn{1}{l}{Iteration} & \multicolumn{1}{c}{Rank} & \multicolumn{1}{c}{$R_{\max}$} & \multicolumn{1}{c}{Objective} & {Time} \\
\midrule
\endfirsthead
\toprule
NSTF & {(n, N), m} & \multicolumn{1}{l}{Algorithm} & \multicolumn{1}{l}{Iteration} & \multicolumn{1}{c}{Rank} & \multicolumn{1}{c}{$R_{\max}$} & \multicolumn{1}{c}{Objective} & {Time} \\
\midrule
\endhead
\multirow[t]{2}{*}[0pt]{1} & \multirow[t]{2}{*}[0pt]{\parbox[t]{2cm}{(40,861)\\m=236202}} & RPOP & 19, 1000, 19 & 2 & 1.99e-07 & -2.7517484e+02 & 8.5 \\
 & & SDPNAL+ & 16, 16, 800 & 3 & 5.64e-07 & -2.7517493e+02 & 67.0 \\
\addlinespace
\multirow[t]{2}{*}[0pt]{1} & \multirow[t]{2}{*}[0pt]{\parbox[t]{2cm}{(60,1891)\\m=1155402}} & RPOP & 18, 950, 18 & 2 & 9.42e-07 & -4.9982804e+02 & 50.5 \\
 & & SDPNAL+ & 1, 1, 524 & 2 & 6.62e-07 & -4.9982798e+02 & 232.7 \\
\addlinespace
\multirow[t]{2}{*}[0pt]{1} & \multirow[t]{2}{*}[0pt]{\parbox[t]{2cm}{(80,3321)\\m=3590002}} & RPOP & 28, 1450, 28 & 2 & 6.43e-07 & -7.6119587e+02 & 365.0 \\
 & & SDPNAL+ & 11, 11, 1142 & 7 & 5.72e-06$^{\dagger}$ & -7.6123071e+02 & 3603.6 \\
\addlinespace
\multirow[t]{2}{*}[0pt]{2} & \multirow[t]{2}{*}[0pt]{\parbox[t]{2cm}{(40,861)\\m=236202}} & RPOP & 23, 1200, 23 & 2 & 3.65e-07 & -1.0807713e+03 & 10.1 \\
 & & SDPNAL+ & 13, 23, 350 & 2 & 9.88e-07 & -1.0807709e+03 & 41.0 \\
\addlinespace
\multirow[t]{2}{*}[0pt]{2} & \multirow[t]{2}{*}[0pt]{\parbox[t]{2cm}{(60,1891)\\m=1155402}} & RPOP & 21, 1100, 21 & 2 & 8.72e-07 & -2.3932547e+03 & 59.7 \\
 & & SDPNAL+ & 18, 36, 350 & 2 & 8.52e-07 & -2.3932470e+03 & 259.2 \\
\addlinespace
\multirow[t]{2}{*}[0pt]{2} & \multirow[t]{2}{*}[0pt]{\parbox[t]{2cm}{(80,3321)\\m=3590002}} & RPOP & 24, 1250, 24 & 2 & 5.41e-07 & -4.2204249e+03 & 320.7 \\
 & & SDPNAL+ & 23, 46, 550 & 2 & 8.67e-07 & -4.2204101e+03 & 2501.8 \\
\addlinespace
\multirow[t]{2}{*}[0pt]{3} & \multirow[t]{2}{*}[0pt]{\parbox[t]{2cm}{(40,861)\\m=236202}} & RPOP & 22, 1150, 22 & 2 & 1.84e-07 & -6.1939590e+03 & 9.9 \\
 & & SDPNAL+ & 3, 3, 350 & 2 & 4.75e-07 & -6.1939856e+03 & 33.6 \\
\addlinespace
\multirow[t]{2}{*}[0pt]{3} & \multirow[t]{2}{*}[0pt]{\parbox[t]{2cm}{(60,1891)\\m=1155402}} & RPOP & 19, 1000, 19 & 2 & 2.66e-07 & -1.5592573e+04 & 54.7 \\
 & & SDPNAL+ & 7, 7, 350 & 2 & 8.36e-07 & -1.5592509e+04 & 205.1 \\
\addlinespace
\multirow[t]{2}{*}[0pt]{3} & \multirow[t]{2}{*}[0pt]{\parbox[t]{2cm}{(80,3321)\\m=3590002}} & RPOP & 15, 800, 15 & 7 & 8.53e-07 & -2.9869486e+04 & 207.5 \\
 & & SDPNAL+ & 6, 13, 300 & 2 & 9.58e-07 & -2.9869493e+04 & 1146.0 \\
\addlinespace
\bottomrule
\end{longtable}
}

As shown in Table~\ref{table:NSTF-4}, RPOP consistently solves all second-order Poly-SDP-RLT relaxations of the NSTF problem, while SDPNAL+ fails for some large-scale instances. RPOP achieves significant speedups over SDPNAL+ across all data types. For small- to medium-scale problems, RPOP is about 3–6 times faster, while for the largest instances ($n=80$), it can be up to 10 times faster.

\section{Conclusions}\label{sec-conclusion}

In this paper, we proposed a unified framework for solving polynomial optimization problems (POPs) via polyhedral–SDP relaxations. We focused primarily on problems with equality and nonnegativity constraints, and showed that general inequality constraints can be equivalently reformulated into this setting using standard or squared slack variables. Within this framework, we established connections and analyzed the relative tightness among various relaxation schemes, including DNN, RLT, and moment–SOS.
To efficiently solve the resulting large-scale relaxations, we developed RiNNAL-POP, a low-rank augmented Lagrangian method that alternates between a low-rank phase and a convex lifting phase. The algorithm incorporates two key features that enable scalability and efficiency. First, the large numbers of nonnegativity inequality constraints and consistency equality constraints—both typically of order $\Omega(n^{2\tau})$—are handled efficiently and jointly through a tailored projection procedure in the convex lifting phase. Second, we identify and exploit the hidden facial structure present in many relaxations and incorporate this into the algorithmic design. In particular, after applying the Burer–Monteiro factorization, we restrict the low-rank subproblems to affine subspaces corresponding to exposed faces of the semidefinite cone, thereby improving numerical stability and computational efficiency. Moreover, our method automatically adjusts the factorization rank, and we provide a theoretical guarantee, via rank identification, that the final rank coincides with that of an optimal solution.
Numerical experiments on a variety of test problems demonstrate the robustness and scalability of the proposed method. 
A natural continuation of the current work is to integrate this framework with relaxation hierarchies to develop efficient global solvers for exactly solving general POPs, particularly those involving nonnegativity constraints.

\section*{Acknowledgments}
The authors express their gratitude to Professor Ching-pei Lee for his valuable comments on the rank identification results.

\bibliographystyle{abbrv}
\bibliography{refs}


\appendix
\section{Examples} 
\label{sec-A}
\begin{example}\label{example-pop}
    Consider the following quadratic optimization problem in the form of \eqref{eq-POP}:
    \begin{equation*}
        \zeta^*=\min \Big\{ f_0(w) \mid  f_i(w) = 0,\; i\in [2],\; w\in \R^3_+ \Big\},
    \end{equation*}
    where the objective and constraint functions are defined as
    \begin{align*}
        & f_0(w)=w_1^2+4w_1-w_3^2,\quad
         f_1(w)=(w_2+w_3-1)^2,\quad
         f_2(w)=(w_1-2)^2+w_2(2w_1-3).
    \end{align*}
    Take degree $\tau=1$ and select the monomial basis with its index set
    \[
    \cA=\cA_1=\left\{
    \begin{bmatrix}
        1\\0\\0\\0
    \end{bmatrix},
    \begin{bmatrix}
        0\\1\\0\\0
    \end{bmatrix},
    \begin{bmatrix}
        0\\0\\1\\0
    \end{bmatrix},
    \begin{bmatrix}
        0\\0\\0\\1
    \end{bmatrix}
    \right\},\quad 
    u^\cA (x)=x=\begin{bmatrix}
        x_0\\x_1\\x_2\\x_3
    \end{bmatrix}
    =\begin{bmatrix}
        1\\w_1\\w_2\\w_3
    \end{bmatrix}.
    \]
    The lifted matrix variable is given by
    \[
    X = u^{\cA}(x)\,u^{\cA}(x)^\top
    = \begin{bmatrix}
    x_0^2 & x_0x_1 & x_0x_2 & x_0x_3\\
    x_1x_0 & x_1^2 & x_1x_2 & x_1x_3\\
    x_2x_0 & x_2x_1 & x_2^2 & x_2x_3\\
    x_3x_0 & x_3x_1 & x_3x_2 & x_3^2
    \end{bmatrix}
    = \begin{bmatrix}
    1 & w_1 & w_2 & w_3\\
    w_1 & w_1^2 & w_1w_2 & w_1w_3\\
    w_2 & w_2w_1 & w_2^2 & w_2w_3\\
    w_3 & w_3w_1 & w_3w_2 & w_3^2
    \end{bmatrix}.
    \]
    Since $D=\R^3_+$ is the nonnegative orthant, we obtain the following convex CPP relaxation:
    \begin{equation*}
     \zeta^J =\min\Big\{\inprod{Q^0}{X} \mid X \in  J, \; \inprod{H^0}{X}=1\Big\}.
    \end{equation*}
    where $J=\{X\in{\rm CPP}^4\mid \inprod{Q^i}{X} = 0,\;i=1,2\}$, 
    $H^0\in \R^{4\times 4}$ has all its entries equal to zero except
    having the entry one at its $(1,1)$ position, 
    and 
    \begin{align*}
    Q^0&=\begin{bmatrix}
            0 & 2 & 0 & 0\\
            2 & 1 & 0 & 0\\
            0 & 0 & 0 & 0\\
            0 & 0 & 0 & -1
        \end{bmatrix},\quad
    Q^1 =A^\top A=
\begin{bmatrix} -1 \\ 0 \\ 1 \\ 1 \end{bmatrix}
\begin{bmatrix} -1 \\ 0 \\ 1 \\ 1 \end{bmatrix}^\top
,\quad
        Q^2&=\begin{bmatrix}
            4 & -2 & -3/2 & 0\\
            -2 & 1 & 1 & 0\\
            -3/2 & 1 & 0 & 0\\
            0 & 0 & 0 & 0
        \end{bmatrix}.
    \end{align*}
As shown in \cite[Example 4.9]{kim2020geometrical}, this convex reformulation is exact, i.e., $\zeta^{J} = \zeta^*$. By taking $\mathcal{P}^{\mathcal{A}} = \bN^4$, we obtain the following DNN relaxation:
$$
\zeta^{\mathrm{DNN}}
= \min 
\Bigl\{\langle Q^0, X\rangle \;\Big|\;
\langle H^0, X\rangle = 1,\;
A X = 0,\;
\langle Q^2, X\rangle = 0,\;
X\in \mathbb{S}_+^4 \cap \bN^4
\Bigr\},
$$
where $\langle Q^1, X\rangle = 0$ is replaced by its equivalent constraint $A X = 0$ (due to the decomposition
$Q^1 = A^\top A$ and positive semidefinitenss of $X$).
In this low-order case, the consistency cone $\mathcal{L}^{\mathcal{A}}$ coincides with $\mathbb{S}^{\mathcal{A}}$ and is therefore omitted.
\end{example}

\end{document}